\newtheorem{thm}{Theorem}[section]
\newtheorem{lem}[thm]{Lemma}
\newtheorem{clm}[thm]{Claim}
\newtheorem{prop}[thm]{Proposition}
\theoremstyle{remark}
\newtheorem{rmk}[thm]{Remark}
\theoremstyle{definition}
\newtheorem{Def}[thm]{Definition}                                        %
\def \C {\mathbb C}
\title{On the long time behaviour of the Conical K\"ahler-
Ricci flows}
\author{Xiuxiong Chen,\ Yuanqi Wang}
\begin{document}

\maketitle{}
\begin{abstract}
We prove that the conical K\"ahler-Ricci flows introduced in \cite{CYW} exist for all time $t\in [0,+\infty)$. These immortal  flows possess maximal regularity in the conical category. As an application, we show if the twisted first Chern class $C_{1,\beta}$ is negative or zero, the corresponding conical K\"ahler-Ricci flows converge to    K\"ahler-Einstein metrics with conical singularities exponentially fast.  To establish these results,   one of our key steps is to prove  a Liouville type theorem for   K\"ahler-Ricci flat metrics (which are defined over $\mathbb{C}^{n}$) with conical singularities.  
\end{abstract}
\tableofcontents
\section{Introduction}
Let $(M, [\omega_0])$ be a polarized K\"ahler manifold and $D$ is a smooth divisor of the anti-canonical line bundle.  Suppose the  ``twisted" first Chern class
($\beta \in (0,1)$ )\[
C_{1,\beta}  =     C_1(M) -  (1-\beta) C_{1}[D]
\] 
has a definite sign.  One important question is to study the existence of the conical K\"ahler-Einstein metric in $(M, [\omega_0], (1-\beta)[D])$
\[
Ric(\omega_\phi) =  \beta \omega_\phi + 2\pi (1-\beta)[D].
\]
This problem has been studied carefully by many authors, for instance, \cite{Berman}, \cite{Brendle}, \cite{JMR},\cite{SongWang},\cite{LiSun} etc.
In particular, ``conical K\"ahler-Einstein metric" is a key ingredient in the recent solution of existence problem for K\"ahler-Einstein metric with positive scalar curvature
\cite{CDS1}\cite{CDS2}\cite{CDS3}. In light of these exciting development, we introduce the notion of   conical K\"ahler-Ricci flow  in \cite{CYW}
\begin{equation}\label{Definition of CKRF}
{{\partial \omega_g}\over {\partial t}}  = \beta\omega_g -   Ric(g)+ 2\pi(1-\beta)[D],
\end{equation}
 to attack the existence problem of
conical K\"ahler-Einstein metrics and conical K\"ahler-Ricci solitons.  In \cite{CYW}, we establish short time existence for this flow initiated from any $(\alpha,\beta)$ conical K\"ahler metric (see Section \ref{section of C0 estimate and C11 estimate} for the definition of  $(\alpha,\beta)$ metrics while we
follow the notations in \cite{CYW} in general). 
This is the second paper in this series where we want to establish the long time existence of this flow.  
\begin{thm}\label{long time existence of CKRF over Riemann surface}  Suppose  $g_0$ is an $(\acute{\alpha},\beta)$-conical K\"ahler metric in $(M, (1-\beta) D)\;$ where $\alpha' \in  (0, \min\{\frac{1}{\beta}-1,1\}).\;$
Then the conical K\"ahler-Ricci flow equation  admits a solution $\phi(t)$ (\ref{Definition of CKRF}) for  $t\in [0,\ +\infty)$. Furthermore, we have 
 
\begin{itemize}
       \item  for every $t\neq 0$, $g(t)$ is  an $(\alpha,\beta)$-conical metric in $(M, (1-\beta) D);\;$ for all $\alpha<\min\{\frac{1}{\beta}-1,1\}$;
       \item for all $N>1$, over the time interval $[0,N]$, $g(t)$ is a $C^{\alpha,\frac{\alpha}{2},\beta}[0,N]$-family of conical  metrics (for all $\alpha<\min\{\frac{1}{\beta}-1,1\}$).
      \end{itemize}
      
\end{thm}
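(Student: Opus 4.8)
The plan is to run a continuity/bootstrap argument on the maximal existence interval. By the short-time existence result of \cite{CYW} the flow \eqref{Definition of CKRF} starting from the $(\acute\alpha,\beta)$-conical metric $g_0$ has a solution $\phi(t)$ on some maximal interval $[0,T)$, and for $t>0$ it becomes an $(\alpha,\beta)$-conical metric for all $\alpha<\min\{\frac1\beta-1,1\}$ with $C^{\alpha,\frac\alpha2,\beta}$ parabolic regularity on compact subintervals. So it suffices to show $T=+\infty$; the structural conclusions (the $(\alpha,\beta)$-cone property for each $t\neq0$ and the $C^{\alpha,\frac\alpha2,\beta}[0,N]$-regularity) then follow on each $[0,N]$ from the uniform estimates used to rule out $T<\infty$. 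First I would reduce \eqref{Definition of CKRF} to a scalar parabolic complex Monge–Ampère equation $\frac{\partial\phi}{\partial t}=\log\frac{(\omega_0+i\partial\bar\partial\phi)^n}{\omega_0^n}+\beta\phi+F$ (with $F$ the fixed background potential built from $\omega_0$, $D$ and the reference conical metric), weighted so that $2\pi(1-\beta)[D]$ is absorbed; existence on $[0,T)$ is equivalent to an a priori bound on $\phi$ together with uniform two-sided bounds on the conical Kähler form.

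The core of the argument is a sequence of a priori estimates, uniform on $[0,T)$. (i) A $C^0$ estimate for $\phi(t)$: using $C_{1,\beta}$ negative or zero — or more precisely exploiting the sign structure built into $F$ together with the maximum principle for the parabolic scalar equation on $M\setminus D$ (and a barrier/cut-off near $D$ tailored to the cone angle, as in Section \ref{section of C0 estimate and C11 estimate}) — one obtains $\|\phi(t)\|_{C^0}\le C(1+t)$, hence bounded on each $[0,N]$. (ii) A $C^2$ (equivalently $C^{1,1}$, i.e. metric-equivalence) estimate: apply a Chern–Lu / Aubin–Yau type parabolic inequality to $\operatorname{tr}_{\omega_0}\omega_g$, controlling the bad curvature terms of the conical background by the lower bound on the bisectional curvature of the model cone metric and by the $C^0$ bound, to get $C^{-1}\omega_0^{\mathrm{cone}}\le\omega_g(t)\le C\,\omega_0^{\mathrm{cone}}$ on $[0,N]$. (iii) Higher conical regularity: once the metric is uniformly equivalent to a conical metric, the linearized operator is uniformly parabolic in the conical Hölder sense, and the Evans–Krylov-type interior estimate in the conical category (again Section \ref{section of C0 estimate and C11 estimate}) upgrades this to a uniform $C^{2,\alpha,\beta}$, and then by parabolic Schauder to a uniform $C^{\alpha,\frac\alpha2,\beta}[0,N]$ bound for $g(t)$, for all $\alpha<\min\{\frac1\beta-1,1\}$. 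These bounds are incompatible with $T<\infty$, since at $t=T$ one could restart the flow by short-time existence from $g(T)$, contradicting maximality; hence $T=+\infty$.

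The main obstacle is step (ii)–(iii): proving the second-order and Schauder estimates \emph{uniformly up to and along the divisor $D$} in the conical category. Away from $D$ these are classical, but near $D$ the background geometry is only modeled on $\mathbb C\times(\text{cone of angle }2\pi\beta)$, and the curvature of the reference conical metric is unbounded; the parabolic Chern–Lu inequality must be localized against the model cone, and the maximum principle arguments must be carried out on $M\setminus D$ with cut-offs whose gradients are controlled by the cone geometry. This is precisely where the hypothesis $\acute\alpha\in(0,\min\{\frac1\beta-1,1\})$ and the restriction $\alpha<\min\{\frac1\beta-1,1\}$ enter: they guarantee the model computations close (the relevant weights are integrable / the barrier exponents are admissible). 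The Liouville theorem for conically singular Kähler–Ricci-flat metrics on $\mathbb C^n$ mentioned in the abstract is the tool that removes the remaining obstruction — it is used in a blow-up argument to establish the uniform conical $C^{2,\alpha,\beta}$ (maximal regularity) estimate near $D$, by ruling out nontrivial blow-up limits. Granting that input, assembling (i)–(iii) into the continuity argument is routine.
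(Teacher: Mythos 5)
Your proposal follows essentially the same route as the paper: reduce to the scalar Monge--Amp\`ere flow, prove a time-dependent $C^0$ bound by the maximum principle (with a Jeffres-type barrier $\epsilon|S|^{2\tau}$ to push maxima off $D$), obtain $C^{1,1}$ uniform equivalence via a parabolic Chern--Lu inequality, upgrade to a uniform $C^{2,\alpha,\beta}$ bound by a blow-up/H\"older-radius argument whose limits are ruled out by the Liouville theorem for conical Ricci-flat metrics on $\C^n$, and then restart at $T-\tfrac{t_0}{2}$ to contradict maximality. One small correction: the $C^0$ estimate does not (and cannot) invoke a sign of $C_{1,\beta}$ --- Theorem \ref{long time existence of CKRF over Riemann surface} makes no sign assumption, and the bound actually comes from applying the maximum principle directly to $u=\partial_t\phi$ satisfying $\partial_t u=\Delta_t u+\beta u$, giving $|u|\le e^{\beta t}|u|_{t=0}$ and hence $|\phi|\le a+\tfrac{b}{\beta}e^{\beta t}$ (exponential, not linear, in $t$, but still finite on each $[0,T)$ with $T<\infty$, which is all that is needed).
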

\begin{rmk}The conical flow $\phi(t)$ in Theorem \ref{long time existence of CKRF over Riemann surface} possess $C^{2,\alpha,\beta}$-regularity, while the weak-flow $\phi(t)$ constructed in \cite{WYQWF} only possess 
$C^{1,1}$-regularity apriorily (so the metric tensor is not $C^{\alpha,\beta}$ aprorily). This is the essential difference between (strong) conical flow and weak-conical flow. Theorem \ref{long time existence of CKRF over Riemann surface} actually implies the weak flow constructed in \cite{WYQWF} is strong. Along the line of weak conical flows,  in \cite{LiuZhang}, Liu-Zhang also construct  weak conical  flows and obtain  convergence results of  their flows on Fano manifolds when $\beta\leq \frac{1}{2}$. 
\end{rmk}

\begin{rmk} For smooth K\"ahler-Ricci flow, the global existence of flow is proved by Cao  \cite{Cao}.  For conical K\"ahler-Ricci flow,  when $n=1$, this is recently proved
by Yin \cite{Yin2} and independently by  Mazzeo-Rubinstein-Sesum \cite{MRS} with different functional spaces. 
\end{rmk}

\begin{rmk} For simplicity, we only present the case with one smooth divisor. Our proof certainly works with reducible smooth divisors with no self intersections and with possibly different angles along each component. 
\end{rmk}

\begin{rmk} If the manifold is not Fano  or  the twisted first Chern has mixed sign,  Theorem 1.1 still holds as long as the
evolving K\"ahler class remains to be a K\"ahler class. In particular, the flow is immortal if it fixes the K\"ahler  class.
\end{rmk}

\begin{rmk}  This theorem may leads to some exciting, plausible future research: to ``migrate" a network of important, fundamental results established
in smooth K\"ahler-Ricci flow to our settings.   A partial list of these works (which is far from complete) is  given below and  we refer interested readers to these papers and  references therein for further readings:
  \cite{QZhang1} \cite{Ye} \cite{CWB1} \cite{QZhang2};  \cite{TZ1}\cite{SongTian}\cite{SongWeinkove} \cite{PhongSturm}etc.   A word of caution is, because of the presence of conical singularities, that this ``migration" might not be at all straightforward!  \end{rmk}
As an almost direct application,  the following is true.
\begin{thm}\label{Convergence to KE metric when C1 nonpositive} If $C_{1,\beta} < 0$ or  $C_{1,\beta} = 0$, then the corresponding conical K\"ahler-Ricci flow converges exponentially fast to
a conical K\"ahler-Einstein metric in the $C^{\alpha,\beta}_{1,1}$ topology of $(1,1)$-forms (in the sense of (\ref{Calabi-Yau: Exponential convergence of the metric in the Calpha beta 1,1 topology.})).
\end{thm}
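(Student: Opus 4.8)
The plan is to deduce Theorem~\ref{Convergence to KE metric when C1 nonpositive} from the long time existence and the uniform conical regularity provided by Theorem~\ref{long time existence of CKRF over Riemann surface}, running the strategy Cao used for the smooth K\"ahler--Ricci flow in \cite{Cao} but carrying every estimate inside the conical category. First I would pass to the scalar parabolic complex Monge--Amp\`ere equation. With the normalisation adapted to the sign of $C_{1,\beta}$ — the one that keeps the K\"ahler class $[\omega_0]$ fixed along the flow (by the remark following Theorem~\ref{long time existence of CKRF over Riemann surface} the flow is then still immortal with the same conical regularity) and makes the stationary equation the conical K\"ahler--Einstein equation — write $\omega_{g(t)}=\omega_0+\sqrt{-1}\partial\bar\partial\phi(t)$, so that (\ref{Definition of CKRF}) becomes
\begin{equation*}
\frac{\partial\phi}{\partial t}=\log\frac{(\omega_0+\sqrt{-1}\partial\bar\partial\phi)^{n}}{\Omega}+\mu\,\phi,
\end{equation*}
where $\mu=-1$ when $C_{1,\beta}<0$ and $\mu=0$ when $C_{1,\beta}=0$; here $\Omega$ is a fixed volume form with the $(1-\beta)$-conical singularity along $D$, of total mass $\int_M\omega_0^{n}$ and with Ricci current $\mu\,\omega_0+2\pi(1-\beta)[D]$ (when $\mu=0$ one also carries an additive constant which converges exponentially). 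By Theorem~\ref{long time existence of CKRF over Riemann surface}, $\phi$ exists for all $t$, is smooth off $D$, and $g(t)$ is a genuine $(\alpha,\beta)$-conical metric for every $t>0$; moreover, once the uniform $C^{0}$ control of $\phi$ and $\dot\phi$ obtained below is in hand, the a priori conical estimates behind that theorem, applied on each window $[N,N+1]$, give bounds on $g(t)$ in $C^{2,\alpha,\beta}$ uniform in $t$. This uniform conical regularity is exactly what licenses the maximum principles and Schauder estimates used below on the conical space-time.

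In the case $C_{1,\beta}<0$ the zeroth order term $\mu\,\phi=-\phi$ supplies the damping. Set $v=\dot\phi$; differentiating the equation gives $\partial_t v=\Delta_{g(t)}v-v$. For $t\ge\varepsilon$ the metrics are genuine conical metrics and $v$, together with the derivatives entering the equation, extends continuously across $D$ (maximal conical regularity), so the parabolic maximum principle applies — near $D$ a boundary maximum is excluded by a barrier built from the local conical model of \cite{CYW} — and it yields $\sup_M|v|(t)\le e^{-(t-\varepsilon)}\sup_M|v|(\varepsilon)$. Hence $\dot\phi\to0$ exponentially, so $\phi(t)\to\phi_\infty$ in $C^{0}$ exponentially fast and, letting $t\to\infty$ in the equation, $\phi_\infty$ solves $\log\big((\omega_0+\sqrt{-1}\partial\bar\partial\phi_\infty)^{n}/\Omega\big)=\phi_\infty$, i.e.\ $\mathrm{Ric}(\omega_{\phi_\infty})=-\omega_{\phi_\infty}+2\pi(1-\beta)[D]$ — a conical K\"ahler--Einstein metric.

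In the case $C_{1,\beta}=0$ there is no zeroth order term: $v=\dot\phi$ solves the conical heat equation $\partial_t v=\Delta_{g(t)}v$, so $\sup_M v$ is non-increasing and $\inf_M v$ non-decreasing, but the decay to a constant must now come from the geometry rather than from the equation. The uniform control of the evolving conical metrics in Theorem~\ref{long time existence of CKRF over Riemann surface} — they remain uniformly $(\alpha,\beta)$-equivalent, with uniformly bounded volume, diameter and conical structure near $D$ — yields a uniform parabolic Harnack inequality (valid in the conical setting) for this equation, forcing $\mathrm{osc}_M v(\cdot,t)$ to decay exponentially; a short argument with the monotone quantity $\int_M v\,\omega_{\phi(t)}^{n}$ (whose time derivative equals $-\int_M|\nabla_{g(t)}v|^{2}\,\omega_{\phi(t)}^{n}$) pins the limiting constant to $0$. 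Thus $\sup_M|\dot\phi|\to0$ exponentially, $\phi(t)\to\phi_\infty$ exponentially in $C^{0}$, and $\omega_{\phi_\infty}$ is the conical (twisted) Ricci-flat metric. In either case, once the exponential $C^{0}$ decay is known one feeds it into the Monge--Amp\`ere equation to obtain exponential decay of the volume ratios, and then the uniform-in-time $C^{2,\alpha,\beta}$ bounds together with interpolation upgrade everything to exponential convergence of $\omega_{g(t)}$ to $\omega_{\phi_\infty}$ in the $C^{\alpha,\beta}_{1,1}$ topology of $(1,1)$-forms.

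The algebra above is routine; the delicate part — and the main obstacle — is the justification, \emph{across the divisor $D$}, of the parabolic maximum principle, the parabolic Harnack inequality and the interpolation/Schauder estimates. A priori the functions involved are only controlled in the conical H\"older spaces $C^{k,\alpha,\beta}$, and one must verify, using the local conical models of \cite{CYW} together with the maximal regularity established in this paper, that nothing pathological is concentrated on $D$ — equivalently, that an $(\alpha,\beta)$-conical metric behaves, for the purposes of these analytic inequalities, like a smooth metric carrying only an analytically negligible codimension-two singular set. The case $C_{1,\beta}=0$ carries the extra burden of the time-uniform geometry (hence a time-uniform Harnack constant), and it is precisely here that the a priori control underlying Theorem~\ref{long time existence of CKRF over Riemann surface} — ultimately the Liouville type theorem for K\"ahler--Ricci-flat metrics with conical singularities — is doing the work.
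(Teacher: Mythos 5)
Your overall architecture matches the paper's, and your treatment of the case $C_{1,\beta}<0$ (damping from the zeroth-order term $-\phi$ and the parabolic maximum principle on the conical manifold) is the same argument the authors sketch as the ``much easier'' case. The problem is the $C_{1,\beta}=0$ case, which is the heart of the theorem, and there your proposal rests on exactly the ingredient the authors explicitly flag as unavailable.

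You invoke ``a uniform parabolic Harnack inequality (valid in the conical setting)'' to force $\mathrm{osc}_M\dot\phi(\cdot,t)$ to decay exponentially. But in the remark preceding Proposition~\ref{C0 estimate0} the authors state directly that in the conical setting it is not clear whether Li--Yau type estimates hold, and that they therefore do \emph{not} follow Cao's Harnack argument. A parabolic Harnack inequality for the time-dependent conical heat operator $\partial_t-\Delta_{g(t)}$ uniformly across the divisor $D$ (with a time-uniform constant) is not established anywhere in the paper, and it is not a routine consequence of the elliptic tools developed in Sections~6--8 and Appendix~B, which concern only the elliptic Trudinger/De Giorgi/Moser machinery. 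Asserting it without proof leaves a genuine gap precisely at the step where the whole argument must produce decay rather than mere boundedness.

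The paper circumvents this in two ways you do not use. First, the time-independent $C^{0}$ bound (Step 1 of their Section~\ref{section Exponential Convergence when C_1 nonpositive}) is obtained \emph{not} by any Harnack inequality but by testing the difference of volume forms against powers of the normalised potential, applying the conical Poincar\'e and Sobolev inequalities of \cite{WYQWF}, and running an \emph{elliptic} Moser iteration at each fixed time; this yields $\mathrm{osc}\,\phi\le C$ uniformly in $t$ without any parabolic regularity theory. Second, the mechanism forcing $\dot\phi$ to a constant is the monotonicity of the Calabi--Yau $K$-energy $M_{\omega_0,\beta}$, whose time derivative is $-\frac{1}{n!}\int_M|\nabla\dot\phi|^2\,\omega_\phi^n$; this, combined with the conical Poincar\'e inequality and a parabolic Moser iteration applied to the normalised quantity $v=\dot\phi-\mathrm{avg}(\dot\phi)$, produces the exponential decay $E(t)=\tfrac12\int_M v^2\omega_\phi^n\le Ce^{-C_Pt}$, which is then bootstrapped to $C^{2,\alpha,\beta}$ decay through the linearised Monge--Amp\`ere equation and the conical Schauder estimates from \cite{CYW}. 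Your observation that $\int_M\dot\phi\,\omega_\phi^n$ is monotone is in the right spirit, but as written it is offered only to ``pin the limiting constant to $0$,'' while in the paper the analogous energy decay is what \emph{supplies} the exponential rate. To repair your proposal you should replace the parabolic Harnack step by this $K$-energy plus Poincar\'e/Moser iteration argument, or else supply an independent proof of a time-uniform conical parabolic Harnack inequality --- the latter would itself be a new result beyond what the paper establishes.
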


\begin{rmk} When $C_{1,\beta} = 0, \beta \leq {1\over 2}$, the existence of Ricci-Flat conical metrics  is due to S. Brendle \cite{Brendle} via continuity method. When $C_{1,\beta} < 0,$
the existence has been studied by via continuous methods by Jeffres-Mazzeo-Rubinstein \cite{JMR}, Campana-Guenancia-Paun \cite{CGP}, and Eyssidieux-Guedj-Zeriahi \cite{EGZ}.
\end{rmk}
\begin{rmk} In the work of Li-Sun \cite{LiSun}, they consider the log Calabi-Yau pair \[(X, \sum_{i=1}^N \; (1-\beta_i) D_i) \] such that
\[
   C_1(X) - \sum_{i=1}^N \; (1-\beta_i) C_1(D_i)  = 0.
\]
Then, Theorem \ref{Convergence to KE metric when C1 nonpositive} implies that the existence of Calabi-Yau metric with correct cone angle for any log 
Calabi-Yau pair. It seems that the existence result for log Calabi-Yau pair of this generality is new. For related topics, please see Song-Wang's work \cite{SongWang}.

\end{rmk}
\begin{rmk}In Cao's proof \cite{Cao} on the smooth case, the Li-Yau harnack inequality in \cite{LiYau} plays a key role when showing the limit is K\"ahler-Einstein when $C_{1,\beta}=0$. In our conical case, it's not clear to us whether the Li-Yau type estimates hold. In our case, the monotonicity of the K-energy directly implies the limit is  K\"ahler-Einstein and the convergence of the metric tensor is exponential. 
\end{rmk}
   Going back to Theorem 1.1, much like the smooth counter part, we need to prove
   $C^0$-estimate of evolved potentials.  First, one needs to reduce the flow into a scalar equation. Suppose  $\omega_D$ is  the model conical K\"ahler metric (defined in \cite{Don}, also see the introduction of \cite{CYW})
   with cone angle $\beta$ over $D$ and $h_{\omega_D}$ denotes its Ricci potential.
   Then, 
\begin{equation}\label{ckrf potential equation}
 \frac{\partial \phi}{\partial t}=\log\frac{(\omega_D+\sqrt{-1}\partial\bar{\partial}\phi)^n}{\omega_D^n}+\beta \phi+ h_{\omega_D}.
\end{equation}
Routine calculation shows that $h_{\omega_D} \in C^{\alpha, \beta}$ for some $\alpha > 0. $
   \begin{prop}\label{C0 estimate0} Suppose the conical K\"ahler-Ricci flow exists up to time $T > 0.\;$ Then, there exists a uniform constant $C_T$ such
   that
   \[
        |\phi| + |{{\partial \phi}\over {\partial t}}|(t) < C_T, \qquad \textrm{forall}\  t\in [0,T).
   \]
   \end{prop}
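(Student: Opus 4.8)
We follow the maximum-principle scheme of Cao for the smooth K\"ahler--Ricci flow \cite{Cao}, the only genuinely new ingredient being a maximum principle for the degenerate conical heat operator along $D$. The first observation is that it is enough to bound $\dot\phi:=\frac{\partial\phi}{\partial t}$: once $\|\dot\phi(\cdot,t)\|_{L^\infty(M)}\le C'$ for $t\in[0,T)$, integrating \eqref{ckrf potential equation} in time and using that $\phi_0$ is bounded (it is the K\"ahler potential of the $(\acute{\alpha},\beta)$-conical metric $g_0$ relative to $\omega_D$) gives $|\phi(\cdot,t)|\le\|\phi_0\|_{L^\infty}+C'T$ on $[0,T)$.

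To bound $\dot\phi$ I would proceed as follows. At $t=0$, equation \eqref{ckrf potential equation} reads $\dot\phi(\cdot,0)=\log\frac{\omega_{g_0}^n}{\omega_D^n}+\beta\phi_0+h_{\omega_D}$; since $g_0$ and $\omega_D$ are both $(\acute{\alpha},\beta)$-conical metrics they are mutually quasi-isometric, so $\log\frac{\omega_{g_0}^n}{\omega_D^n}\in L^\infty(M)$, and with $\phi_0\in L^\infty$ and $h_{\omega_D}\in C^{\alpha,\beta}$ this yields $\|\dot\phi(\cdot,0)\|_{L^\infty}\le C_0=C_0(g_0)$. Differentiating \eqref{ckrf potential equation} in $t$ --- legitimate since, by the short-time theory of \cite{CYW}, $\phi(t)$ is a $C^{2,\alpha,\beta}$ family for $t\in(0,T)$, in particular smooth on $M\setminus D$ --- and using $\partial_t\log\frac{\omega_\phi^n}{\omega_D^n}=\operatorname{tr}_{\omega_\phi}\sqrt{-1}\partial\bar\partial\dot\phi=\Delta_{\omega_\phi}\dot\phi$, one gets
\[
\frac{\partial}{\partial t}\dot\phi=\Delta_{\omega_\phi}\dot\phi+\beta\dot\phi,\qquad\text{equivalently}\qquad \frac{\partial}{\partial t}\big(e^{-\beta t}\dot\phi\big)=\Delta_{\omega_\phi}\big(e^{-\beta t}\dot\phi\big).
\]
Hence $u:=e^{-\beta t}\dot\phi$ solves the conical heat equation and is continuous up to $D$, and it suffices to show $\inf_M u(\cdot,0)\le u\le\sup_M u(\cdot,0)$ on $[0,T)$: this gives $|\dot\phi(\cdot,t)|\le e^{\beta t}C_0\le e^{\beta T}C_0$, and the reduction above finishes the proof.

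The heart of the matter is thus the conical maximum principle, where the degeneration of $\omega_\phi$ together with the low a priori regularity of $u$ (only $C^{\alpha,\beta}$ in space) makes the classical step ``$\Delta u\le 0$ at an interior spatial maximum'' delicate at points of $D$. I would handle this with a barrier adapted to the divisor: fixing a defining section $s$ of $\mathcal{O}_M(D)$ with $|s|_{h_D}\le 1$ and setting $w_\epsilon:=u+\epsilon\log|s|_{h_D}^2$ for $\epsilon>0$, the function $w_\epsilon$ tends to $-\infty$ along $D$, so for each $t$ its spatial supremum is attained at some $p_t\in M\setminus D$, where $\omega_\phi$ is a genuine smooth metric. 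Since $\sqrt{-1}\partial\bar\partial\log|s|_{h_D}^2=-\Theta_{h_D}$ is a smooth bounded $(1,1)$-form on $M$ and $\omega_\phi$ is comparable to $\omega_D$, the quantity $\Delta_{\omega_\phi}(-\log|s|_{h_D}^2)=\operatorname{tr}_{\omega_\phi}\Theta_{h_D}$ stays bounded, so at $p_t$ one finds $\partial_t w_\epsilon\le C\epsilon$, whence $\sup_M w_\epsilon(\cdot,t)\le\sup_M w_\epsilon(\cdot,0)+C\epsilon t\le\sup_M u(\cdot,0)+C\epsilon t$; evaluating at a fixed $x\in M\setminus D$ and letting $\epsilon\to 0$ gives $u(x,t)\le\sup_M u(\cdot,0)$, and the lower bound follows by the same argument with $-\log|s|_{h_D}^2$ in place of $\log|s|_{h_D}^2$. (Alternatively, one may quote directly the maximum principle for the conical parabolic operator in the function spaces of \cite{CYW}.) The main obstacle is precisely to make this step rigorous --- that spatial extrema of $u$ are never created on $D$ and that the error term $\operatorname{tr}_{\omega_\phi}\Theta_{h_D}$ stays controlled uniformly on $[0,T']$ for each $T'<T$ --- which rests on the $(\alpha,\beta)$-structure of $\omega_\phi(t)$, hence its comparability with $\omega_D$, and on the smoothness of the Chern curvature of $(\mathcal{O}_M(D),h_D)$.
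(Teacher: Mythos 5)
Your proof is correct and follows the same overall strategy as the paper: derive the linear parabolic equation $\partial_t\dot\phi=\Delta_{\omega_\phi}\dot\phi+\beta\dot\phi$ for the Ricci potential, run a barrier argument near $D$ to apply the maximum principle on $M\setminus D$, conclude $|\dot\phi|\le e^{\beta t}\|\dot\phi(\cdot,0)\|_{L^\infty}$, and integrate in time. The one genuine difference is the barrier. The paper uses Jeffres' trick with $\epsilon|S|^{2\tau}$, $0<2\tau\le\beta\alpha$: this barrier stays bounded, and forcing the maximum of $u+\epsilon|S|^{2\tau}$ off $D$ requires comparing H\"older moduli (Lemma \ref{maximal attained away from divisor}), i.e.\ it crucially uses $u\in C^{\alpha,\beta}$; the error term $\Delta_t|S|^{2\tau}\ge -C(T')\epsilon$ is quoted from (31) of \cite{WYQWF}. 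You instead take $\epsilon\log|s|^2_{h_D}$, which blows down to $-\infty$ along $D$, so the spatial maximum of $w_\epsilon$ automatically lies in $M\setminus D$ with no regularity on $u$ beyond boundedness, and the error is simply $\epsilon\,\mathrm{tr}_{\omega_\phi}\Theta_{h_D}$, uniformly bounded on $[0,T']$ since $\omega_\phi(t)$ is a $C^{2,\alpha,\beta}$ family of conical metrics there. Your log-barrier is in the spirit of the paper's own Section~\ref{section Bounded weakly-subharmonic functions and weak maximum principle} (the weak maximum principle, Theorem \ref{thm weak maximal principle}), where $\log|z|$ is used precisely because it applies to $L^\infty$ functions rather than H\"older ones; the paper just happens not to deploy it here because $\dot\phi$ is already known to be H\"older along the flow. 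Both approaches are valid, yield the same exponential bound on $\dot\phi$, and integrate to the stated $C^0$ bound on $\phi$.
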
 
   Following \cite{JMR} and \cite{CDS2} (elliptic case), we can use a parabolic type Chern-Lu inequality to obtain:
   \begin{prop} \label{C^2 estimate0} Under the same assumptions as in Prop \ref{C0 estimate0},   we have $$\frac{1}{K}\omega\leq \omega+\sqrt{-1}\partial\bar{\partial}\phi\leq K\omega.$$
\end{prop}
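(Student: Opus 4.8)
The plan is to adapt to the parabolic setting the elliptic Chern--Lu (Schwarz lemma) argument of \cite{JMR,CDS2}, comparing the evolving metric $\omega_\phi:=\omega_D+\sqrt{-1}\partial\bar\partial\phi$ with the fixed model conical metric $\omega_D$ of \eqref{ckrf potential equation} (so ``$\omega$'' in the statement is $\omega_D$). Put $v:=\mathrm{tr}_{\omega_\phi}\omega_D$. Away from $D$, \eqref{ckrf potential equation} yields
\[
\frac{\partial}{\partial t}(g_\phi)_{i\bar j}=-R_{i\bar j}(\omega_\phi)+\beta(g_\phi)_{i\bar j}-\beta(g_D)_{i\bar j}+\big(\text{fixed terms from }R_{i\bar j}(\omega_D)\text{ and }\partial_i\partial_{\bar j}h_{\omega_D}\big),
\]
so that, differentiating $\log v$ in time and combining with the Aubin--Yau / Chern--Lu lower bound for $\Delta_{\omega_\phi}\log v$, the $\mathrm{Ric}(\omega_\phi)$-contributions cancel --- the same cancellation that drives the Schwarz-lemma estimate for the smooth K\"ahler--Ricci flow \cite{Cao} --- and the gradient terms are absorbed, leaving
\[
\Big(\frac{\partial}{\partial t}-\Delta_{\omega_\phi}\Big)\log v\ \le\ C_0\, v+C_0,
\]
where $C_0$ depends only on $\beta$, on an upper bound for the holomorphic bisectional curvature of $\omega_D$, and on a lower bound for $\mathrm{Ric}(\omega_D)$ relative to $\omega_D$; both are properties of the model geometry of \cite{Don} recalled in \cite{CYW}. (For the conical model metric the \emph{lower} bisectional-curvature bound can genuinely fail, which is exactly why one must control $\mathrm{tr}_{\omega_\phi}\omega_D$ and not $\mathrm{tr}_{\omega_D}\omega_\phi$.)

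Then comes the barrier step. Fix $T'<T$ and set $H:=\log v-A\phi$. Using $\Delta_{\omega_\phi}\phi=\mathrm{tr}_{\omega_\phi}(\omega_\phi-\omega_D)=n-v$ we get
\[
\Big(\frac{\partial}{\partial t}-\Delta_{\omega_\phi}\Big)H\ \le\ (C_0-A)\,v+C_0+An-A\,\frac{\partial\phi}{\partial t}.
\]
By Proposition \ref{C0 estimate0}, $|\phi|$ and $|\partial\phi/\partial t|$ are bounded by $C_T$ on $[0,T)$, so choosing $A:=C_0+1$ gives $(\partial_t-\Delta_{\omega_\phi})H\le -v+C_1$ with $C_1=C_1(T)$. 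At a space--time maximum $(x_0,t_0)$ of $H$ on $M\times[0,T']$ one reads off $v(x_0,t_0)\le C_1$ when $t_0>0$, while if $t_0=0$ the value $H(x_0,0)$ is controlled because $g_0$, being an $(\acute{\alpha},\beta)$-conical metric, is uniformly equivalent to $\omega_D$. Hence $\sup_{M\times[0,T')}H\le C_2(T)$, and so $v=\mathrm{tr}_{\omega_\phi}\omega_D\le e^{\,C_2+AC_T}=:K_1$ on $M\times[0,T)$. Finally, \eqref{ckrf potential equation} gives $\log(\omega_\phi^n/\omega_D^n)=\partial\phi/\partial t-\beta\phi-h_{\omega_D}$, which lies in a fixed interval by Proposition \ref{C0 estimate0} (here $h_{\omega_D}\in C^{\alpha,\beta}$ is bounded); combining this two-sided volume-ratio bound with $v\le K_1$ --- i.e.\ every eigenvalue of $\omega_\phi$ relative to $\omega_D$ is $\ge 1/K_1$ --- forces each such eigenvalue into a fixed compact subinterval of $(0,\infty)$, which is precisely $\frac{1}{K}\omega_D\le\omega_\phi\le K\omega_D$ for a suitable $K=K(T)$.

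The genuinely delicate point --- where the conical case needs more than Cao's smooth argument --- is that $\omega_D$, hence $v$ and the inequalities above, is singular along $D$, so the maximum principle cannot be invoked naively on the compact $M$. I would resolve this exactly as in \cite{JMR,CDS2}: either (i) run the whole computation on Donaldson's smoothings $\omega_\epsilon\to\omega_D$ and the associated smooth approximate flows $\phi_\epsilon(t)$ from \cite{CYW}, with all constants uniform in $\epsilon$ (the required uniform upper-bisectional and lower-Ricci bounds for $\omega_\epsilon$ are part of that construction), and let $\epsilon\to 0$; or (ii) work on $M\setminus D$ after replacing $H$ by $H+\delta\log|s|_h^2$, so that the maximum of the modified quantity is attained off $D$, and then let $\delta\to 0$, using the a priori $(\alpha,\beta)$-conical regularity of $\phi(t)$ for $t>0$ (from \cite{CYW}) to ensure $v$ does not grow too fast near $D$. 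In either route the real work is bookkeeping uniformity simultaneously in the regularization parameter and in $t\in[0,T)$, and checking the stated curvature bounds for the model (resp.\ approximate) geometry; once that is in hand the estimate follows as above. Should the limited $C^{\alpha,\beta}$-regularity of $h_{\omega_D}$ obstruct the first differential inequality, one absorbs it by adding to $H$ a bounded multiple of $h_{\omega_D}$ (or of $\log\mathrm{tr}_{\omega_\phi}$ of an auxiliary smooth K\"ahler form), which leaves the argument structurally unchanged.
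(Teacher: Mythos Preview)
Your proposal is correct and follows essentially the same Chern--Lu/Schwarz-lemma route as the paper: both control $\mathrm{tr}_{\omega_\phi}\omega_D$ (not the reverse trace), derive a parabolic differential inequality using the upper bisectional-curvature bound of $\omega_D$, add a multiple of the potential $\phi$ as a barrier, and finish with the volume-ratio bound from Proposition~\ref{C0 estimate0}. Two cosmetic differences: the paper works with the multiplicative weight $e^{-C_2\phi}u$ (your $H=\log v-A\phi$ is its logarithm, so the computations are equivalent), and for the singularity along $D$ the paper invokes Jeffres' trick directly, adding $\epsilon|S|^{2\tau}$ and using that $e^{-C_2\phi}u\in C^{\alpha,\beta}$ to force the maximum off $D$, rather than your option~(ii) with $\delta\log|s|_h^2$ or your option~(i) via smoothings. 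Both barrier choices work; the paper's is closer in spirit to the $C^0$ step (Proposition~\ref{Classical solution's C^0 estimate}) and avoids having to check what $\Delta_{\omega_\phi}\log|s|_h^2$ contributes.
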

\begin{rmk}  Guenancia-Paun's trick in \cite{GP} also works well for the $C^{1,1}$-estimate here. Actually, we have multiple choices here to prove the $C^{1,1}$-estimate.
\end{rmk}

 To prove long time existence, we essentially need to prove a priori Holder estimate for the evolving conical K\"ahler forms.   A critical step  for this type estimate
 is to prove the following Liouville type theorem:

\begin{thm}\label{thm Liouville}(Liouville Theorem) Suppose $\omega$ is a $C^{\alpha,\beta}$ conical K\"ahler  metric defined over $C^{n}$. Suppose there is a constant  $K$ such that  
\begin{equation}
\omega^{n}=\omega_{\beta}^{n},\ \frac{1}{K}\omega_{\beta}\leq \omega\leq K\omega_{\beta}\ \textrm{over}\ \C \times \C^{n-1}\setminus \{z=0\}.
\end{equation}
Then, there is a linear transformation $L$ which preserves $\{z=0\}$ and \[\omega=L^{\star}\omega_{\beta}.\]
\end{thm}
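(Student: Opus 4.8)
The plan is to combine a dilation (blow-up / blow-down) analysis with the volume comparison that is available because $\omega$, exactly like $\omega_{\beta}$, has nonnegative Ricci curvature in the sense of currents. Since $H^{1,1}(\C^{n})=0$ I would first write $\omega=\omega_{\beta}+\sqrt{-1}\partial\bar\partial\phi$, turning the hypothesis $\omega^{n}=\omega_{\beta}^{n}$ into the homogeneous complex Monge--Amp\`ere equation $(\omega_{\beta}+\sqrt{-1}\partial\bar\partial\phi)^{n}=\omega_{\beta}^{n}$ with $\frac{1}{K}\omega_{\beta}\le\omega_{\beta}+\sqrt{-1}\partial\bar\partial\phi\le K\omega_{\beta}$. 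Because this equation is \emph{uniformly elliptic}, the conical Evans--Krylov and Schauder estimates used elsewhere in this paper apply: $\phi$ is smooth away from $\{z=0\}$, has the maximal conical regularity up to $\{z=0\}$, and the resulting interior bounds depend only on $K$, hence are invariant under the dilations introduced next.

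Writing $w$ for the coordinate on $\C^{n-1}$, one has $\omega_{\beta}=\sqrt{-1}\partial\bar\partial\bigl(|z|^{2\beta}+|w|^{2}\bigr)$, and the holomorphic dilations $S_{\lambda}(z,w)=(\lambda^{1/\beta}z,\lambda w)$, $\lambda>0$, satisfy $S_{\lambda}^{\star}\omega_{\beta}=\lambda^{2}\omega_{\beta}$. Put $\omega_{\lambda}:=\lambda^{-2}S_{\lambda}^{\star}\omega$; each $\omega_{\lambda}$ again satisfies $\omega_{\lambda}^{n}=\omega_{\beta}^{n}$ and $\frac{1}{K}\omega_{\beta}\le\omega_{\lambda}\le K\omega_{\beta}$ with the \emph{same} $K$, so by the previous paragraph $\{\omega_{\lambda}\}_{\lambda>0}$ is precompact in the local conical $C^{\infty}$ topology. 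I would extract a tangent cone at infinity $\omega_{\infty}=\lim_{\lambda_{j}\to\infty}\omega_{\lambda_{j}}$ and a tangent cone at the divisor point $0$, namely $\omega_{0}=\lim_{\lambda_{j}\to 0}\omega_{\lambda_{j}}$; both are $C^{\alpha,\beta}$ conical K\"ahler metrics on $\C^{n}$ satisfying the hypotheses, with $Ric(\omega_{0})=Ric(\omega_{\infty})=2\pi(1-\beta)[\,z=0\,]$.

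Now I would close the argument by volume rigidity. A computation of the determinant gives $Ric(\omega)=2\pi(1-\beta)[\,z=0\,]=Ric(\omega_{\beta})\ge 0$ as currents, so $(\C^{n},d_{\omega},\omega^{n})$ is an $\mathrm{RCD}(0,2n)$ space and the ratio $V(r):=\Vol_{\omega}\bigl(B_{r}(0)\bigr)/r^{2n}$ is nonincreasing; by the two-sided bound it has finite, positive limits $V(0^{+})$ and $V(\infty)$. Since $V_{\omega_{\lambda}}(r)=V(\lambda r)$, the tangent cones $\omega_{0},\omega_{\infty}$ have \emph{constant} volume ratio equal to $V(0^{+})$ and $V(\infty)$ respectively, hence (volume cone $\Rightarrow$ metric cone) are metric cones with vertex $0$; being Ricci-flat K\"ahler cones on $\C^{n}$ uniformly equivalent to $\omega_{\beta}$ they are forced to have dilation $S_{\lambda}$, and by the uniqueness of transverse K\"ahler--Einstein metrics with fixed Reeb field --- together with the fact that a sheared model $L^{\star}\omega_{\beta}$ is \emph{not} uniformly equivalent to $\omega_{\beta}$ --- we obtain $\omega_{0}=C_{0}^{\star}\omega_{\beta}$ and $\omega_{\infty}=C_{\infty}^{\star}\omega_{\beta}$ with $C_{0},C_{\infty}$ block-diagonal linear maps (in particular preserving $\{z=0\}$). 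Any linear $C$ carries $\omega_{\beta}$-balls about $0$ isometrically onto $(C^{\star}\omega_{\beta})$-balls about $0$, so both cones have the model volume density $c_{\beta}$; therefore $V(0^{+})=V(\infty)=c_{\beta}$ and $V\equiv c_{\beta}$ on $(0,\infty)$. The equality case of the volume comparison then forces $\omega$ itself to be a metric cone with vertex $0$, so $\omega$ equals its tangent cone $\omega_{0}=C_{0}^{\star}\omega_{\beta}$; setting $L:=C_{0}$ gives $\omega=L^{\star}\omega_{\beta}$ with $L$ preserving $\{z=0\}$.

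The step I expect to be the main obstacle is the rigid implication ``constant volume ratio $\Rightarrow$ metric cone'' (and the ensuing classification of the cone) in the presence of the divisor: one must either invoke the Cheeger--Colding / De Philippis--Gigli structure theory for the $\mathrm{RCD}(0,2n)$ space $(\C^{n},d_{\omega},\omega^{n})$ and the uniqueness of transverse K\"ahler--Einstein metrics (with a limiting argument when the Reeb field is irrational), or produce a self-contained K\"ahler substitute via an integrated Bochner/Reilly-type identity for the radial potential that is insensitive to the divisor. Should one wish to bypass the $\mathrm{RCD}$ machinery entirely, a parallel route is to upgrade the $O(r^{-2})$ curvature decay already furnished by the dilation analysis to a polynomial rate through an elliptic iteration on the (asymptotically linear, at large scales) equation satisfied by the curvature tensor, concluding that $\omega$ is asymptotically conical to $\omega_{\beta}$ of positive order and then appealing to the classification of asymptotically conical Ricci-flat K\"ahler metrics on $\C^{n}$ with cone $\omega_{\beta}$. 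A reassuring special case throughout is $\beta=\frac1m$ with $m\in\N$: the $m$-sheeted map $(z,w)\mapsto(z^{m},w)$ pulls $\omega_{\beta}$ back to the Euclidean form and $\omega$ back to a complete \emph{smooth} Ricci-flat K\"ahler metric on $\C^{n}$ that is uniformly equivalent to Euclidean and $\Z_{m}$-invariant, so the classical smooth Liouville theorem immediately gives $\omega=L^{\star}\omega_{\beta}$.
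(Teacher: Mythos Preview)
Your route is genuinely different from the paper's. The paper never invokes $\mathrm{RCD}$ structure theory or cone classification; instead it runs an elementary PDE argument on the scalar quantity $v=\mathrm{tr}_{\omega_\beta}\omega$, which is bounded and satisfies $\Delta_\omega v\ge S/K\ge0$ away from the divisor (Yau's computation, with $S$ the third-derivative quantity). After rescaling so that a near-supremum point of $v$ lies at unit distance from the origin, one cannot directly apply the strong maximum principle to the limit $v_\infty$ because it is only $L^\infty$ a priori; the paper's key device is to solve, \emph{before} passing to the limit, the Dirichlet problem $\Delta_{\omega_i}h_i=0$ on a fixed ball with boundary data $v_i$, using the conical weak and strong maximum principles and the Perron method built in Sections~\ref{section Bounded weakly-subharmonic functions and weak maximum principle}--\ref{section Strong Maximum Principles and De Giorgi estimate}. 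The limit $h_\infty$ is harmonic, dominates $v_\infty$, and attains its interior supremum, hence is constant; this forces $v_\infty$ to equal the supremum on a whole sphere, whence (by the strong maximum principle again) $S_\infty\equiv0$ and the tangent cone at infinity is $L^\star\omega_\beta$. The last step is a gap proposition (Proposition~\ref{prop Gap}: if $\omega$ is $L^\infty$-close to $\omega_\beta$ on a ball then $[\omega]_{\alpha,\beta}\le C$) plus rescaling, giving $[\omega]_{\alpha,\beta,\C^n}=0$ directly---this is Theorem~\ref{thm Tangent cone trivial implies omega trivial}.

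The obstacle you flag is real and, I would say, more than a technicality. Even granting the $\mathrm{RCD}(0,2n)$ monotonicity and the volume-cone $\Rightarrow$ metric-cone rigidity for the tangent objects, the step ``Ricci-flat K\"ahler metric cone on $\C^n$ with edge along $\{z=0\}$, uniformly equivalent to $\omega_\beta$, must be $L^\star\omega_\beta$'' is itself a Liouville-type assertion: the link is a \emph{singular} Sasaki space with an edge circle, and the uniqueness of transverse K\"ahler--Einstein metrics in that setting is not available off the shelf. There is a genuine risk of circularity, since that uniqueness is no easier than the theorem you are trying to prove. Your alternative suggestion (upgrade the scale-invariant $O(r^{-2})$ curvature bound to a polynomial decay rate and quote an AC classification) runs into the same wall, as AC uniqueness on $\C^n$ with an edge-cone model is likewise not established. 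The paper's advantage is precisely that it never needs to classify cones: the harmonic-lifting trick reduces everything to a scalar maximum principle, and the final rigidity is a short rescaling argument once the gap proposition is in hand. Your orbifold observation for $\beta=1/m$ is correct, and the paper makes the analogous remark (for $\beta=1/2$, and a Calabi third-order argument for $\beta<1/2$) in Appendix~A.
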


This plays a central role in the proof of long time existence theorem.   When conical singularity is not presence, this is due to  Riebesehl- Schulz \cite{RS84} where higher derivatives are used heavily. The problem certainly goes back to the famous
paper by E. Calabi \cite{Calabi58} and Pogorelov \cite{Po78}. Even in the smooth setting, this is considered an alternative approaches to the later famous
Evans-Krylov Shauder estimate for Monge-Ampere equation (cf. \cite{Evans}\cite{GT}).\\

  To prove this Liouville type theorem, we need to extend the maximal principle to more general settings.  In the literature, it seems to be a standard trick to use Jeffery's trick whenever we need to apply Maximum principle. A standard feature of the Jeffery's trick is to
add a small copy of small power of $|S|$ where $S$ is the defining holomorphic section of divisor;  and this will perturb the maximum point off from divisor, which allows
us to use standard maximal principle.  For this trick to work,  an important pre-condition is
that the function, which we applied maximum principle to,  must be $C^{\alpha,\beta}$ for some $\alpha > 0$. This restricts severely how we can use maximum principle. 
In this paper, we are able to remove this restriction and  are able to adapt both weak and strong maximal principle to our setting for function
which is  locally smooth away from divisor and $L^\infty$ globally. Indeed, we plan to apply maximum principle to $tr_{\omega_{\beta}}\omega_\phi$
which can only be $L^\infty$ globally.  


      \begin{thm}\label{Evans Krylov}Suppose $g(t),\ t\in[0,T)$ is a solution to the CKRF in Theorem \ref{long time existence of CKRF over Riemann surface}, $T<\infty$. Then there exists constant $\mathbb{K}$ in the sense of Def \ref{Convention on the constant}  such that the potential $\phi$ satisfies the following bound \[|\phi|_{2,\alpha,\beta}\leq \mathbb{K}\ \textrm{for all}\ t\in [0,T).\] 
      
Consequently, the flow $g(t),\ t\in[0,T)$ can be extended beyond $T$.
    \begin{rmk} For conical K\"ahler Einstein metric, the corresponding a priori estimate is  derived in \cite{CDS2} (c.f. discussions in \cite{JMR}).  This parabolic type Holder estimate should be able to extend as an a priori  $C^{2,\alpha,\beta}$-estimate for continuity method for solving the K\"ahler-Einstein equations  (as in \cite{LiSun} and \cite{JMR}).
    \end{rmk}   

\end{thm}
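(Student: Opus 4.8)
The plan is to bootstrap from the $C^0$- and $C^{1,1}$-bounds (Propositions \ref{C0 estimate0} and \ref{C^2 estimate0}) to a uniform $C^{2,\alpha,\beta}$-bound on $\phi$ by a blow-up / compactness argument anchored on the Liouville Theorem \ref{thm Liouville}. First I would recast the flow equation (\ref{ckrf potential equation}) in the form $\det(g_{i\bar j}+\phi_{i\bar j}) = e^{F}\det(g_{i\bar j})$ with $F = \partial_t\phi - \beta\phi - h_{\omega_D}$; by Proposition \ref{C0 estimate0} the right-hand data $F$ is uniformly bounded in $L^\infty$ on $[0,T)$, and by Proposition \ref{C^2 estimate0} the complex Hessian $\omega_\phi$ is uniformly equivalent to the reference conical metric $\omega_\beta$. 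Away from the divisor the equation is uniformly parabolic with smooth (locally) coefficients, so interior (smooth) parabolic Evans--Krylov / Schauder theory already gives interior $C^{2,\alpha}$-bounds on compact subsets of $M\setminus D$ that may, however, blow up as one approaches $D$. The content of the theorem is precisely the uniform control up to and across $D$, in the conical Hölder scale $C^{2,\alpha,\beta}$.

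The key step is the blow-up argument. Suppose the conclusion fails: then there is a sequence of times $t_k \in [0,T)$, points $p_k$, and scales $r_k \to 0$ such that a rescaled version of $\phi(t_k)$ (subtracting its value and first-order conical Taylor polynomial at $p_k$, and dilating parabolically by $r_k$) has $C^{2,\alpha,\beta}$-seminorm normalized to $1$ but with the supremum of the oscillation of the Hessian realizing the seminorm. Since the $C^{1,1}$-bound persists under this scaling and $F$ rescales to something with vanishing oscillation (as $r_k\to 0$ the $L^\infty$ norm of $F$ times the parabolic scaling factor tends to $0$), the limiting object is a time-independent solution on $\mathbb{C}^n$ (or on $\mathbb{C}\times\mathbb{C}^{n-1}$ with the model cone along $\{z=0\}$, depending on whether the blow-up points approach $D$ at comparable rate to $r_k$) of $\omega^n = \omega_\beta^n$ with $\frac1K\omega_\beta \le \omega \le K\omega_\beta$ — precisely the hypothesis of Theorem \ref{thm Liouville}. (If the blow-up happens away from $D$ one lands instead in the classical Riebesehl--Schulz situation \cite{RS84}, i.e.\ the smooth Liouville theorem for Monge--Amp\`ere on $\mathbb C^n$.) The Liouville theorem forces the limit to be $L^\star\omega_\beta$ for a linear $L$, hence a conical Hessian that is \emph{constant}; this makes its $C^{\alpha,\beta}$-seminorm vanish, contradicting the normalization that the seminorm of the limit is $1$. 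One needs the compactness of the rescaled sequence in a suitable local $C^{2,\alpha',\beta}$ topology for $\alpha'<\alpha$, which is supplied by the $C^{1,1}$-estimate together with the interior smooth estimates off the divisor and the extended maximum principle described in the excerpt (used to control $tr_{\omega_\beta}\omega_\phi$, which is only $L^\infty$ globally, across $D$).

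The hard part is managing the \emph{parabolic} and \emph{conical} features simultaneously in the blow-up: one must choose the parabolic dilation so that the time-derivative term and the Ricci-potential term both disappear in the limit while the normalized Hessian seminorm survives, and one must track whether the blow-up center sits on, near, or far from the divisor, since each case yields a different limiting model equation. A subtle point is that the conical Hölder seminorm $|\cdot|_{2,\alpha,\beta}$ is not scale-invariant in the naive way — it is a weighted seminorm adapted to the wedge geometry — so the rescaling has to be done with respect to the model metric $\omega_\beta$ and its associated quasi-distance, and one must verify that the class of $(\alpha,\beta)$-metrics with fixed $K$ is closed under this rescaling and limit. Once the Liouville theorem is in hand, though, the contradiction is clean: a linear model has vanishing Hessian oscillation. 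Finally, the uniform bound $|\phi|_{2,\alpha,\beta}\le\mathbb K$ on $[0,T)$ implies $\omega_\phi(t)$ converges in $C^{\alpha,\beta}$ as $t\uparrow T$ to an $(\alpha,\beta)$-conical metric, which serves as a valid initial datum for the short-time existence theorem of \cite{CYW}, so the flow extends past $T$.
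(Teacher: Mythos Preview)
Your overall architecture---freeze the equation as a Monge--Amp\`ere equation with right-hand side $e^F$, blow up at a putative bad point, invoke the Liouville theorem on the limit, and derive a contradiction---matches the paper's. But there is a genuine gap at the point where you assert that $F$ ``rescales to something with vanishing oscillation (as $r_k\to 0$ the $L^\infty$ norm of $F$ times the parabolic scaling factor tends to $0$).'' Under the spatial rescaling the paper uses (freezing time at $t_k$ and dilating by $r_k$ around $p_k$), the pullback $\widehat F_k$ has the \emph{same} $L^\infty$ norm as $F$; nothing forces it to become constant in the limit from an $L^\infty$ bound alone. What does go to zero is the $C^{\alpha',\beta}$-\emph{seminorm} of $\widehat F_k$, since $[\widehat F_k]_{\alpha',\beta}=r_k^{\alpha'}[F]_{\alpha',\beta}$. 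For this you need $F\in C^{\alpha',\beta}$ uniformly on $[0,T)$, and Proposition~\ref{C0 estimate0} only gives $L^\infty$. The paper supplies this missing ingredient in its Step~1: it applies a parabolic Harnack/De Giorgi estimate for conical metrics (Theorem~4.2 in \cite{WYQWF}) to the linear heat equation $\partial_t u=\Delta_\phi u+\beta u$ satisfied by $u=\partial_t\phi$, obtaining $|\partial_t\phi|_{\alpha',\alpha'/2,\beta,[0,T)}\le C$ and hence $|F|_{\alpha',\beta}\le C$. Without this step the limit equation $\omega_\infty^n=C_1\,\omega_\beta^n$ is not justified, and the Liouville theorem cannot be invoked.

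The same missing H\"older control undermines your compactness claim. You say $C^{2,\alpha',\beta}$ compactness near $D$ is ``supplied by the $C^{1,1}$-estimate together with the interior smooth estimates off the divisor and the extended maximum principle\ldots used to control $tr_{\omega_\beta}\omega_\phi$.'' But the maximum principle for $tr_{\omega_\beta}\omega_\phi$ gives exactly the $C^{1,1}$-bound, nothing more; it does not furnish $C^{2,\alpha',\beta}$ bounds across the divisor. The paper obtains compactness by a different mechanism: the H\"older radius (Definition~\ref{Holder radius}) is set up so that on each rescaled ball the metric has small $C^{\alpha,\beta}$ oscillation, and then a perturbation-of-identity trick (writing $\det(i\partial\bar\partial\widehat\phi)-\Delta_\beta\widehat\phi$ as a small multiple of $[i\partial\bar\partial\widehat\phi]_{\alpha'}$, using that $\det(i\partial\bar\partial\widehat\phi)=e^{\widehat F}\in C^{\alpha'}$) combined with the linear conical Schauder estimate yields a uniform $C^{2,\alpha',\beta}$ bound (Claim~\ref{Claim Bootstrapping in Elliptic estimate}). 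This closes only because $e^{\widehat F}\in C^{\alpha'}$ with uniform norm---again requiring the H\"older regularity of $F$ from Step~1. Finally, note that the paper's contradiction is not obtained by ``seminorm of the limit equals $1$'' (which is delicate since convergence is only in $C^{\alpha''}$ for $\alpha''<\alpha'$), but rather by solving a Poincar\'e--Lelong equation (Proposition~\ref{Solvability of the ddbar equation}) to perturb the flat potential of $L^\star\omega_\beta$ to a potential for $\widehat\omega_i$ on a ball of radius $2$, directly contradicting the normalized H\"older radius $r_{p_i}=1$.
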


 One of the key ingredients of Theorem \ref{thm Liouville} is the theory of weak solutions to the Laplace equation of a concial metric. Fortunately, 
 in the polar coordinates, a cone metric is quasi-isometric to the Euclidean metric away from $D$ (by definition). Thus, though straight forward to observe, it's suprising and amazing that  the weak-solution theory in Chap III of \cite{LSU} (De-Giorge estimate), Chap 8 of \cite{GT}, and Chap 4 of \cite{HanLin} are all directly  applicable. Roughly speaking, this is because  the weak-solution theory only involves  $W^{1,2}$-quantities of  the weak solution. Thus after integration by parts, we can transform the 
 $W^{1,2}$-inequalities with respect to the cone metric $\omega$ to
 $W^{1,2}$-inequalities with respect to the Euclidean metric $g_{E}$ 
 (in the polar coordinates)! Thus, all the classical tools can be applied. Though in most of the place we directly use weak solution theory (Moser's iteration, Weak harnack inequalities...), we still prove Trudinger's Harnack inequality in detail in Appendix B, to show how to   
 transform the 
 $W^{1,2}$-inequalities with respect to the cone metric $\omega$ to
 $W^{1,2}$-inequalities with respect to the Euclidean metric, and then directly apply the results in \cite{LSU}, \cite{GT}, and \cite{HanLin}.
 This proof, though straight forward,   shows  all the required estimates in weak solution theory  are true in our situation.

  Strategy of our work and organization of this article: In Section \ref{section of C0 estimate and C11 estimate} we state some conventions of notations and prove the $C^0$ and $C^{1,1}$ estimate. Then we  study the $C^{2,\alpha,\beta}$ regularity  in Section 3---11. In Section \ref{section of Holder estimate for the second derivative}, assuming Theorem \ref{thm Liouville},  we prove Theorem \ref{Evans Krylov} by showing the H\"older radius is uniformly bounded, thus settle down the $C^{2,\alpha,\beta}$ estimate and the proof of Theorem \ref{long time existence of CKRF over Riemann surface}. In Section \ref{section Poincare Lelong equations} we solve the Poincare-Lelong equation with the correct estimates, which is crucial when we perturb the rescaled limit back to get a contradiction. In Section \ref{section A rigidity theorem}---\ref{section Strong Maximum Principles and De Giorgi estimate}, we establish the analytic tools for proving Theorem \ref{thm Liouville}. In section \ref{section Proof of Theorem liouville} we prove Theorem \ref{thm Liouville}.  In Section \ref{section Bootstrapping of  CKRF} we prove that the flow has maximal regularity immediately when $t>0$, which is crucial when proving the convergence of the rescaled flows. In Section \ref{section Exponential Convergence when C_1 nonpositive} we prove when $C_{1,\beta}<0$ or $=0$, the CKRF converges to conical K\"ahler-Einstein metric exponentially. In Appendix A, we present a short proof of the Liouville theorem in the case when $\beta\leq \frac{1}{2}$, where we use a regularity result due to Brendle \cite{Brendle}. In Appendix B, we prove Trudinger's  Harnack inequality in our case, by directly using the results from \cite{GT} and \cite{HanLin}.\\
  
  Acknowledgment:    The first named author wish to thank  Kai Zheng,  Chengjiang Yao for helpful discussions on maximal principle over conical settings.  He is also grateful to  Xi Zhang
for sharing his insight on weak conical-K\"ahler Ricci flow. The second named author wish to thank Prof S.K Donaldson for communications on earlier versions of this work. The second author is  grateful to Prof Xianzhe Dai, Guofang Wei, and Rugang Ye for their interest in this work and their continuous support. He also would like to thank  Yuan Yuan for related  discussions on  complex analysis. Both authors would like to thank   Song Sun, Kai Zheng, and Haozhao Li for carefully reading earlier versions of this paper and related discussions. 

 \section{Convention of notations,  $C^0$ estimate, and $C^{1,1}$ estimate.\label{section of C0 estimate and C11 estimate}}

\begin{Def}\label{Convention on the constant}  We would like to make following convention on the constants in this paper, similar to that of \cite{CYW}:
   \textit{Without further notice, the  "C" in each estimate  means a constant depending on the dimension $n$, the angle $\beta$, the background objects $(M,\omega_{0},L,h,D,\omega_{D})$, the $\alpha$ (and $\acute{\alpha}$ if any) in the same estimate or in the corresponding theorem (proposition, corollary, lemma), the initial metric $\omega_{0}$, and finally the time $T$ ( beyond which we want to extend the flow). We add index to the "C" if it depends on more factors than the above objects. Moreover, the "C" in different places might be different. The $C$ never depends on $T^{\prime}<T$ (unless it comes with another index, like $C(T^{\prime})$).}
   \end{Def}
   
   Most of the notations in this article follow those of \cite{CYW}. For the readers' convenience, we introduce some key definitions from \cite{CYW} here.

\begin{Def} \label{definition of (alpha,beta) conical metric:single divisor.} $(\alpha, \beta)$ conical K\"ahler metric:\  For any $\alpha \in (0, \min\{{1\over \beta}-1,1\}),\;$ a   K\"ahler form $\omega$ is said to be an $(\alpha, \beta)$  conical K\"ahler metric on $(M, (1-\beta) D)$  if it satisfies the following conditions.
\begin{enumerate}
\item $\omega$ is a closed positive $(1,1)$-current over $M$.
\item For any point $p\in D$, there exists a  holomorphic chart $\{z,u_i,\ i=1,..,n-1\}$
such that in this chart, $\omega$ is quasi-isometric to the standard cone metric
\[
 \omega_{\beta}=\beta^2|z|^{2\beta - 2} \frac{\sqrt{-1}}{2}d z \wedge d \bar z + {\sqrt{-1}\over 2}  \displaystyle \sum_{j=2}^{n} d u_j \wedge d \bar u_j.
\]
\item There is a $\phi\in C^{2,\alpha,\beta}(M)$  such that \[\omega=\omega_0+ i \partial \bar \partial \phi.\]  
\end{enumerate}
\end{Def}
\begin{rmk} If an $\omega$, defined either globally or locally,  satisfies $1$ and $2$ in Definition \ref{definition of (alpha,beta) conical metric:single divisor.}, but only partially satisfies $3$ in the sense that $\omega=\sqrt{-1} \partial \bar \partial \phi$ for some $\phi\in C^{\alpha,\beta}$, then we say $\omega$ is a weak conical metric.
This definition can be found in Definition 1.2 in \cite{WYQWF}.  
\end{rmk}

Near $D$, using the defining function $z$ of $D$, it's easy to describe the function space $C^{2,\alpha,\beta}(M)$ near $D$. Namely, near $D$, let $\xi$ be singular coordinate, $z=|\xi|^{\frac{1}{\beta}-1}\xi.$ Let $z=\rho e^{i\theta}$, and $s_i,i=3....2n$ be real coordinates of $z_2,...z_{n}$ which are perpendicular to $z$. We define

\begin{Def}($C^{2,\alpha,\beta}$-functions).
\begin{enumerate}
  \item $f(z,z_2.....)\in C^{\alpha,\beta}$ iff $f(|\xi|^{\frac{1}{\beta}-1}\xi,z_2...)\in C^{\alpha}$ in terms of $\xi,z_2...$
  \item $f(z,z_2.....)\in C^{2,\alpha,\beta}$ iff
  $$|z|^{2-2\beta}\frac{\partial^2f}{\partial z\partial \bar{z}}\in C^{\alpha,\beta},|z|^{1-\beta}\frac{\partial^2f}{\partial \rho\partial s_i}\in C^{\alpha,\beta},$$
    $$|z|^{-\beta}\frac{\partial^2f}{\partial \theta\partial s_i}\in C^{\alpha,\beta
    },\frac{\partial^2f}{\partial s_i\partial s_j}\in C^{\alpha,\beta
    }.$$
   \end{enumerate} 
   The
full definition of the function space $C^{2,\alpha,\beta}(M)$ and the corresponding parabolic norm  is in section 2 of \cite{CYW}.

\end{Def}

 The following model metric defined in \cite{Don} satisfies the above definition.
 \begin{equation*}\label{Model conical metric defined by Donaldson}\omega_{D}=\omega_{0}+\delta i \partial \bar \partial |S|^{2\beta}
 ,\ \textrm{where}\ \delta \ \textrm{is a small enough number}.\end{equation*}

Let $r=|z|^{\frac{1}{\beta}}$ and $\theta$ be just  the angle of $z$ from the positive real axis. In the polar coordinates $r,\theta, u_{i},\ 2\leq i\leq n$, $\omega_{\beta}$ can be written as 
$$\omega_{\beta}=dr^2+\beta^2r^2d\theta^2+\Sigma_{j=2}^{n}du\otimes d\bar{u}.$$
 Notice in the polar coordinates we have $\beta^2g_{E}\leq \omega_{\beta}\leq \frac{1}{\beta^2}g_{E}$, where $g_{E}$ is Euclidean metric in the polar coordinates i.e
   \begin{equation}\label{equ def of g_E in polar coordinates}g_{E}=dr^2+r^2d\theta^2+\Sigma_{j=2}^{n}du\otimes d\bar{u}.\end{equation} 
   From now on we will be using the polar coordinates in most of the sections, since there the conical metrics are  quasi-isometric to the Euclidean metric.

 Theorem \ref{long time existence of CKRF over Riemann surface} is a direct consequence of Proposition \ref{Classical solution's C^0 estimate}, \ref{C^2 estimate}, and Theorem \ref{Evans Krylov}. Now we prove the $C^0$ estimate. 
 \begin{prop}\label{Classical solution's C^0 estimate}Suppose the flow exists over $[0,T^{\prime}]$. Then we have the following $C^{0}$ bound  \[|\phi|_{0}\leq a+\frac{b}{\beta}e^{\beta t},\ \textrm{for all} \ (x,t)\in (M\setminus D)\times [0,T^{\prime}],\] where $a=\sup_{x}|\phi|(x,0)$, and $b=\sup_{x}\{|\log\frac{(\omega_{D}+\sqrt{-1}\partial\bar{\partial}\phi_0)^n}{\omega_{D}^n}+\beta\phi+h_{\omega_{D}}|(x,0)\}$. 
 \end{prop}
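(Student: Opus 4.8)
The plan is to reproduce the classical $C^{0}$ estimate for the twisted K\"ahler--Ricci flow — differentiate the potential equation in time, run a maximum principle on $\partial\phi/\partial t$, then integrate — with the single modification that the maximum principle has to be carried out on $M\setminus D$, which one does via the conical (``Jeffery's trick'') version; this is legitimate here because $\partial\phi/\partial t$ is $C^{\alpha,\beta}$.

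First I would differentiate the scalar flow (\ref{ckrf potential equation}) in $t$. Writing $\omega_{\phi}=\omega_{D}+\sqrt{-1}\partial\bar\partial\phi$ and $\dot\phi=\partial\phi/\partial t$, and using $\partial_{t}\log\frac{\omega_{\phi}^{n}}{\omega_{D}^{n}}=\Delta_{\omega_{\phi}}\dot\phi$, one gets on $(M\setminus D)\times(0,T')$ the linear parabolic equation
\[
\frac{\partial\dot\phi}{\partial t}=\Delta_{\omega_{\phi}}\dot\phi+\beta\dot\phi .
\]
Setting $u:=e^{-\beta t}\dot\phi$ turns this into the honest heat equation $\partial_{t}u=\Delta_{\omega_{\phi}}u$ of the evolving cone metric. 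I would note here that $u$ is continuous on $M\times[0,T']$ and smooth off $D$: indeed $\dot\phi=\log\frac{\omega_{\phi}^{n}}{\omega_{D}^{n}}+\beta\phi+h_{\omega_{D}}$, with $\phi\in C^{2,\alpha,\beta}$, $h_{\omega_{D}}\in C^{\alpha,\beta}$, and $\frac{\omega_{\phi}^{n}}{\omega_{D}^{n}}$ a $C^{\alpha,\beta}$ function bounded away from $0$ and $\infty$ on each slice (both metrics being conical of the same angle $\beta$).

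Next I would apply the weak maximum principle to $u$ on $M\times[0,T']$ in the form adapted to conical metrics. For $\varepsilon>0$ and a sufficiently small $\gamma>0$ (so that $|S|^{2\gamma}\in C^{\alpha,\beta}$), the perturbed function $u+\varepsilon|S|^{2\gamma}$ cannot attain its space-time maximum on $D$: near $D$ the term $\varepsilon|S|^{2\gamma}$ grows faster than the $C^{\alpha}$-modulus of continuity of $u$, so the maximum is pushed off $D$, onto a parabolic-interior point of $(M\setminus D)\times(0,T']$ or onto $\{t=0\}$; at an interior point the classical maximum principle for $\partial_{t}-\Delta_{\omega_{\phi}}$ applies (the quasi-isometry $\omega_{\phi}\sim\omega_{\beta}$ near $D$ making $|S|^{2\gamma}$ a genuine subsolution there), forcing the maximum onto $\{t=0\}$. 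Letting $\varepsilon\to0$ gives $\sup_{M}u(\cdot,t)\le\sup_{M}u(\cdot,0)$, and the same applied to $-u$ gives $\sup_{M}|u(\cdot,t)|\le\sup_{M}|u(\cdot,0)|$. Since $u(\cdot,0)=\dot\phi(\cdot,0)=\log\frac{(\omega_{D}+\sqrt{-1}\partial\bar\partial\phi_{0})^{n}}{\omega_{D}^{n}}+\beta\phi_{0}+h_{\omega_{D}}$, the right-hand side is exactly $b$, so $|\dot\phi(\cdot,t)|\le b\,e^{\beta t}$. Integrating in time, for fixed $x\in M\setminus D$,
\[
|\phi(x,t)|\le|\phi(x,0)|+\int_{0}^{t}|\dot\phi(x,s)|\,ds\le a+b\int_{0}^{t}e^{\beta s}\,ds=a+\frac{b}{\beta}\bigl(e^{\beta t}-1\bigr)\le a+\frac{b}{\beta}e^{\beta t},
\]
which is the asserted bound.

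The only step that is not the routine scalar-flow computation is the maximum principle across the cone locus $D$. Because $u=e^{-\beta t}\dot\phi$ is globally $C^{\alpha,\beta}$ (hence continuous up to $D$ and up to $t=0$) and $\omega_{\phi}$ is quasi-isometric to the model cone $\omega_{\beta}$, the standard Jeffery's-trick barrier $\varepsilon|S|^{2\gamma}$ does the job; this is the one point at which the conical geometry intervenes, and it is precisely the simplest ($C^{\alpha,\beta}$) instance of the conical maximum principle. I expect getting this barrier argument right is the main thing to be careful about, the rest being the K\"ahler--Ricci-flow standard.
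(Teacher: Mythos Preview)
Your proof is essentially the paper's --- differentiate in time, apply Jeffres' barrier trick to push the maximum off $D$, then integrate --- the only cosmetic difference being your substitution $u=e^{-\beta t}\dot\phi$ to get a pure heat equation, whereas the paper keeps the $\beta u$ term and extracts the $e^{\beta t}$ factor from the maximum principle directly. One point to sharpen: $|S|^{2\gamma}$ is not literally a subsolution of the heat equation (one only has $\Delta_{\omega_\phi}|S|^{2\gamma}\ge -C(T')$, not $\ge 0$, because of the curvature of the hermitian metric on $L_D$), so the perturbed function picks up an $O(\varepsilon)$ error at an interior maximum; the paper carries this error term along and kills it in the limit $\varepsilon\to 0$, and your argument needs the same small adjustment.
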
Over $M\setminus D$, denote $u=\frac{\partial \phi}{\partial t}$,  we compute \begin{equation} \label{parabolic equation of the Ricci potential}
 \frac{\partial u}{\partial t}=\Delta_tu+\beta u.
 \end{equation}
We use the function $|S|^{2\tau}$ as barrier function such that  $2\tau\leq \beta\alpha$. Suppose the flow is smooth over $[0,T^{\prime}]$. First we have the following lemma due to Jeffres \cite{Jeffres}.
\begin{lem}\label{maximal attained away from divisor}$u+\epsilon |S|^{2\tau}$ attains maximum in $M\setminus D$ when $t\in [0,T^{\prime}]$; $u-\epsilon |S|^{2\tau}$  attains minimum in $M\setminus D$.
\end{lem}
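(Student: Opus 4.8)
The plan is to prove the lemma using only the $C^{\alpha,\beta}$-regularity of $u$ near $D$; the flow equation \eqref{parabolic equation of the Ricci potential} plays no role at this stage and will be invoked only \emph{after} the extremum has been relocated into $M\setminus D$, in the proof of Proposition \ref{Classical solution's C^0 estimate}. Since the flow is a classical solution on the closed interval $[0,T^{\prime}]$, for each $t$ the form $\omega_{D}+\sqrt{-1}\partial\bar\partial\phi(t)$ is a conical metric with $C^{2,\alpha,\beta}$-potential, so the right-hand side of \eqref{ckrf potential equation} — the logarithm of a ratio of cone volume forms, plus $\beta\phi+h_{\omega_{D}}$ — exhibits $u(\cdot,t)=\frac{\partial\phi}{\partial t}(\cdot,t)$ as an element of $C^{\alpha,\beta}$, and $u$ is continuous on the compact set $M\times[0,T^{\prime}]$. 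Hence $u\pm\epsilon|S|^{2\tau}$ attains its extrema on $M\times[0,T^{\prime}]$, and the only thing left is to show that a maximum of $u+\epsilon|S|^{2\tau}$, and a minimum of $u-\epsilon|S|^{2\tau}$, cannot occur over $D$.

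So suppose a maximum of $w:=u+\epsilon|S|^{2\tau}$ occurs at $(p,t_{0})$ with $p\in D$. I would work in a holomorphic chart $\{z,u_{2},\dots,u_{n}\}$ about $p$ with $D=\{z=0\}$, and in the singular coordinate $\xi$ with $z=|\xi|^{\frac1\beta-1}\xi$, so $|z|=|\xi|^{1/\beta}$; since $S$ vanishes to order one along $D$, $|S|^{2}=a(z,u)\,|z|^{2}$ with $a$ smooth and strictly positive, hence $|S|^{2\tau}=a^{\tau}|\xi|^{2\tau/\beta}$. For small $\delta>0$ let $q_{\delta}\in M\setminus D$ be the point with $\xi(q_{\delta})=(\delta,0,\dots,0)$ and the same $u_{j}$-coordinates as $p$; then $q_{\delta}\to p$ as $\delta\to0^{+}$, $|S(q_{\delta})|^{2\tau}\ge c_{0}\,\delta^{2\tau/\beta}$ for some $c_{0}>0$, and, because $u(\cdot,t_{0})\in C^{\alpha,\beta}$ is by definition $C^{\alpha}$ in $\xi$, $u(q_{\delta},t_{0})\ge u(p,t_{0})-C_{1}\delta^{\alpha}$. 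Using $|S(p)|=0$, these give
\begin{equation*}
w(q_{\delta},t_{0})-w(p,t_{0})\ \ge\ \epsilon c_{0}\,\delta^{2\tau/\beta}-C_{1}\,\delta^{\alpha},
\end{equation*}
which is strictly positive for all sufficiently small $\delta>0$ because $2\tau/\beta<\alpha$; here I use the inequality $2\tau\le\beta\alpha$ in strict form, which is harmless since $u$ lies in $C^{\alpha',\beta}$ for every admissible exponent $\alpha'$ and $\alpha$ may be enlarged slightly. This contradicts maximality of $w$ at $(p,t_{0})$, so every maximum of $w$ is attained over $M\setminus D$. The minimum statement is the mirror image: if $u-\epsilon|S|^{2\tau}$ attained a minimum at $(p,t_{0})\in D\times[0,T^{\prime}]$, then at $q_{\delta}$ one computes $\bigl(u-\epsilon|S|^{2\tau}\bigr)(q_{\delta},t_{0})-\bigl(u-\epsilon|S|^{2\tau}\bigr)(p,t_{0})\le C_{1}\delta^{\alpha}-\epsilon c_{0}\,\delta^{2\tau/\beta}<0$ for small $\delta$, again a contradiction.

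I do not expect a serious obstacle; the only points needing care are elementary bookkeeping near $D$ — that $|S|^{2\tau}$ equals $|\xi|^{2\tau/\beta}$ up to a positive smooth factor (so its growth exponent in the $\xi$-coordinate is exactly $2\tau/\beta$) and that $u(\cdot,t_{0})$ has finite $C^{\alpha,\beta}$-seminorm — both immediate from the definitions of a classical conical solution and of $C^{\alpha,\beta}$. The conceptual content, emphasized already in the introduction, is that it is precisely the $C^{\alpha,\beta}$-regularity of $u$, together with $2\tau<\beta\alpha$, that lets the barrier $\epsilon|S|^{2\tau}$ — itself only $C^{2\tau/\beta}$-Hölder in $\xi$, hence \emph{not} in $C^{\alpha,\beta}$ — drag both extrema off the divisor, so that in the proof of Proposition \ref{Classical solution's C^0 estimate} one may legitimately evaluate \eqref{parabolic equation of the Ricci potential} and apply the classical parabolic maximum principle at an interior point of $M\setminus D$.
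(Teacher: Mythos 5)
Your argument is correct and is essentially the paper's own proof rephrased in the $\xi$-coordinate: the paper compares values of $u+\epsilon|S|^{2\tau}$ at $p\in D$ and at a nearby point $q$ on the integral curve of $\partial/\partial\rho$ with $\rho=|z|$, and derives the contradiction from $|u(q)-u(p)|\le|u|_{\alpha}\rho^{\beta\alpha}$ versus $\epsilon|S|^{2\tau}(q)\ge C\epsilon\rho^{2\tau}$ with $2\tau<\beta\alpha$; since $|\xi|=|z|^{\beta}$, your $\delta^{\alpha}$ and $\delta^{2\tau/\beta}$ are precisely $\rho^{\beta\alpha}$ and $\rho^{2\tau}$, and the mechanism (the barrier's Hölder exponent in the uniformized variable is strictly below that of $u$) is identical. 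The only addition is that you make explicit the compactness argument guaranteeing an extremum exists on $M\times[0,T']$ and the remark that the nominal condition $2\tau\le\beta\alpha$ should be read strictly, both of which the paper leaves implicit.
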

For the reader's convenience of include the proof here. 
\begin{proof}{of Lemma \ref{maximal attained away from divisor}:}\  It suffices to prove $u+\epsilon |S|^{2\tau}$ attains maximum in $M\setminus D$ when $t\in [0,T^{\prime}]$, the other is similar. We argue by contradiction. If not, suppose $u+\epsilon |S|^{2\tau}$ attains maximum in $D$ at $(p,\ t_1)$.
Let $\rho=|z|$, notice that the integral curve of $\frac{\partial}{\partial \rho}$ is not necessarily the geodesic. Then let $q$ be on the  integral curve $\gamma$ of $\frac{\partial}{\partial \rho}$ starting at $p$. We have
\begin{equation}\label{C0C11 contradiction if power of |S| is small} \frac{|u(q)-u(p)|}{\rho^{\beta\alpha}(q)}\leq |u|_{\alpha}< \infty;\ \frac{u(q)-u(p)+\epsilon|S|^{2\tau}(q)}{\rho^{\beta\alpha}(q)}\leq 0.\end{equation}

Since $2\tau-\beta\alpha<0$,  when $\rho(q)$ is sufficiently small with respect to $|u|_{\alpha}$, we have
\begin{equation}\label{S norm has big slope}\frac{\epsilon |S|^{2\tau}}{\rho^{\beta\alpha}(q)}\geq C\epsilon \rho^{2\tau-\beta\alpha}(q)\geq 2|u|_{\alpha}+1.
\end{equation}
Therefore (\ref{S norm has big slope}) contradicts (\ref{C0C11 contradiction if power of |S| is small}).
\end{proof}
\begin{proof}{of Proposition \ref{Classical solution's C^0 estimate}:}\ We compute $$\frac{\partial (u+\epsilon|S|^{2\tau})}{\partial t}=\Delta_t(u+\epsilon|S|^{2\tau})+\beta (u+\epsilon|S|^{2\tau})-\Delta_t\epsilon|S|^{2\tau}-\beta \epsilon|S|^{2\tau}.$$
Using (31) in \cite{WYQWF}, we know  \begin{equation}\label{Subharmonicity of norm of S}
\Delta_t\epsilon|S|^{2\tau}\geq -C(T^{\prime})\epsilon ,
\end{equation}

then 
\begin{equation}\label{Perturbed Ricci potential satisfies a parabolic inequality}
\frac{\partial (u+\epsilon|S|^{2\tau})}{\partial t}\leq \Delta_t(u+\epsilon|S|^{2\tau})+\beta (u+\epsilon|S|^{2\tau})+C(T^{\prime})\epsilon.
\end{equation}
By Lemma \ref{maximal attained away from divisor}, the maximum-principle applies to (\ref{Perturbed Ricci potential satisfies a parabolic inequality}). Hence
$$(u+\epsilon|S|^{2\tau})\leq e^{\beta t}(|u+\epsilon|S|^{2\tau}|_{(0,t=0)})+C(T^{\prime})\epsilon$$
Let $\epsilon\rightarrow 0$ we get $u\leq e^{\beta t}|u|_{(0,t=0)}$. Thus the upper bound is obtained. The lower bound $u\geq -e^{\beta t}|u|_{(0,t=0)}$ follows similarly. Then 
\begin{equation}\label{Bounding the Ricci potential}
|\frac{\partial \phi}{\partial t}|=|u|=\leq e^{\beta t}|u|_{(0,t=0)}.
\end{equation}
By integrating (\ref{Bounding the Ricci potential}) and using 
\[\frac{\partial \phi}{\partial t}|_{t=0}=(\log\frac{(\omega_D+\sqrt{-1}\partial\bar{\partial}\phi)^n}{\omega_D^n}+\beta \phi+ h_{\omega_D})|_{t=0},\] we obtain the desired bound in Proposition \ref{Classical solution's C^0 estimate}.
\end{proof}

Our next objective  is to prove the following $C^{1,1}$ bound for conical KRF. We follow the approach in \cite{JMR} and \cite{CDS2}. Notice that Guenancia-Paun's trick in \cite{GP} also works for the
$C^{1,1}$-estimate here.  \begin{prop} \label{C^2 estimate} Under the same assumptions in Theorem \ref{long time existence of CKRF over Riemann surface}, there exists a uniform constant $K$ in the sense of Definition \ref{Convention on the constant}  such that $$\frac{1}{K}\omega_{D}\leq \omega_{D}+\sqrt{-1}\partial\bar{\partial}\phi\leq K\omega_{D}.$$
\end{prop}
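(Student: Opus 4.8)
The plan is to run a parabolic Chern–Lu (Aubin–Yau / Schwarz-lemma) argument, exactly in the spirit of \cite{JMR} and \cite{CDS2}, but carried out along the flow and with the maximum principle machinery (Lemma \ref{maximal attained away from divisor}) replacing the smooth maximum principle. Set $\omega = \omega_D + \sqrt{-1}\partial\bar\partial\phi$ and consider the quantity $u = \log\operatorname{tr}_{\omega}\omega_D = \log(g^{i\bar j}(\omega_D)_{i\bar j})$, computed on $M\setminus D$ where everything is smooth. Using the flow equation $\partial_t\phi = \log\frac{\omega^n}{\omega_D^n} + \beta\phi + h_{\omega_D}$, one computes the evolution of $\operatorname{tr}_\omega\omega_D$; the standard Chern–Lu computation gives an inequality of the schematic form
\begin{equation*}
\Big(\partial_t - \Delta_\omega\Big)\log\operatorname{tr}_\omega\omega_D \;\le\; C_1\,\operatorname{tr}_\omega\omega_D \;-\; \beta,
\end{equation*}
where $C_1$ depends only on a lower bound for the bisectional curvature of the fixed reference metric $\omega_D$ (which is bounded on $M\setminus D$ in the conical sense) and on $n,\beta$. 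Then I would test the auxiliary function $H = u - A\phi = \log\operatorname{tr}_\omega\omega_D - A\phi$ for a large constant $A = A(C_1)$, using the elementary inequality $\Delta_\omega\phi = n - \operatorname{tr}_\omega\omega_D$, so that $(\partial_t - \Delta_\omega)H \le (C_1 - A)\operatorname{tr}_\omega\omega_D + (\text{bounded terms involving }\partial_t\phi, \phi)$; choosing $A > C_1 + 1$ makes the coefficient of $\operatorname{tr}_\omega\omega_D$ strictly negative.

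Next I would apply the maximum principle to $H + \epsilon|S|^{2\tau}$ over $M\setminus D$. By the argument of Lemma \ref{maximal attained away from divisor} — noting that $u$ is only $L^\infty$ globally but is smooth and bounded below away from $D$, and that $|S|^{2\tau}$ has the same barrier behaviour used there — the maximum of the perturbed quantity over $[0,T']\times M$ is attained in the interior of $M\setminus D$ (or at $t=0$). At an interior maximum at positive time we have $\partial_t H \ge 0$, $\Delta_\omega H \le 0$, and the contribution $\Delta_\omega|S|^{2\tau}$ is bounded below by $-C(T')\epsilon$ as in (\ref{Subharmonicity of norm of S}), so the differential inequality forces $\operatorname{tr}_\omega\omega_D \le C$ at that point, hence $H \le C$ there. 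Since $|\phi| + |\partial_t\phi|$ is already bounded by Proposition \ref{Classical solution's C^0 estimate}, letting $\epsilon \to 0$ gives a uniform upper bound on $H$, hence on $\log\operatorname{tr}_\omega\omega_D - A\phi$, hence on $\operatorname{tr}_\omega\omega_D$ over all of $M\setminus D$. Combining this with the Monge–Ampère equation $\omega^n = e^{\partial_t\phi - \beta\phi - h_{\omega_D}}\omega_D^n$ (whose right-hand side is bounded above and below), a standard linear-algebra argument bounds $\operatorname{tr}_{\omega_D}\omega$ as well, yielding $\frac{1}{K}\omega_D \le \omega \le K\omega_D$.

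The main obstacle is not the pointwise computation, which is routine, but making the maximum principle step rigorous in the conical setting: the quantity $\log\operatorname{tr}_\omega\omega_D$ need not be $C^{\alpha,\beta}$ up to $D$ (a priori it is only $L^\infty$), so one cannot directly invoke Jeffres' trick in its usual form. I would handle this either (i) by the extended weak/strong maximum principle for functions that are smooth away from $D$ and $L^\infty$ globally, which the introduction promises to develop in Sections \ref{section A rigidity theorem}--\ref{section Strong Maximum Principles and De Giorgi estimate}, or (ii) alternatively, as the remark after the proposition suggests, by using Guenancia–Paun's trick from \cite{GP}, which replaces the bad reference term by $\operatorname{tr}_\omega \omega'$ for a regularized metric $\omega' = \omega_D + \epsilon\,\omega_{\text{smooth}}$ and only passes to the limit at the end, thereby sidestepping the regularity issue of the trace. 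Either route produces the stated two-sided bound with $K$ depending only on the data in Definition \ref{Convention on the constant}.
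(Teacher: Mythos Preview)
Your approach is essentially the same Chern--Lu argument the paper runs (the paper works with the multiplicative form $e^{-C_2\phi}\operatorname{tr}_\omega\omega_D$ rather than your additive $\log\operatorname{tr}_\omega\omega_D - A\phi$, but these are equivalent reformulations). However, the ``main obstacle'' you identify is illusory here: since the flow solution $\phi$ is already in $C^{2,\alpha,\beta}$ by the short-time existence theorem, the metric $\omega_\phi$ is a $C^{\alpha,\beta}$ conical metric, and hence $\operatorname{tr}_\omega\omega_D$ (a smooth algebraic expression in $C^{\alpha,\beta}$ quantities, bounded away from zero) is itself $C^{\alpha,\beta}$. The paper simply records this (``since $e^{-C_2\phi}u\in C^{\alpha,\beta}[0,T']$'') and applies Jeffres' trick directly via Lemma \ref{maximal attained away from divisor}. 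The extended $L^\infty$ maximum principles developed later are needed for the Liouville theorem, where the relevant trace $\operatorname{tr}_{\omega_\beta}\omega$ really is only $L^\infty$ a priori --- not for this estimate.
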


Let $u=g^{i\bar{l}}h_{j\bar{k}}f^{j}_{i}f^{\bar{k}}_{\bar{l}}$, $f=id$ is treat as a harmonic map from $M$ to $M$ itself. Choose $z_{p}$ as normal coordinates of $g$ (with K\"ahler form $\omega$) at $x$, and let $l,\ i,$ be normal coordinate indexes of $\omega$ also. Then we compute
\begin{eqnarray*}& & \Delta_{\omega}u
\\&=&g^{l\bar{i}}_{,p\bar{p}}h_{j\bar{k}}f^{j}_{l}f^{\bar{k}}_{\bar{i}}+h_{j\bar{k},d\bar{m}}f^{j}_{i}f^{\bar{k}}_{\bar{i}}f^{d}_{p}f^{\bar{m}}_{\bar{p}}+h_{j\bar{k}}f^{j}_{ip}f^{\bar{k}}_{\bar{i}\bar{p}}
\\&+&h_{j\bar{k}}f^{j}_{ip\bar{p}}f^{\bar{k}}_{\bar{i}}+h_{j\bar{k}}f^{j}_{i}f^{\bar{k}}_{\bar{i}p\bar{p}}+h_{j\bar{k},d}f^{d}_{p}f^{j}_{i}f^{\bar{k}}_{\bar{i}\bar{p}}
\\&+&h_{j\bar{k},\bar{d}}f^{\bar{d}}_{\bar{p}}f^{j}_{i}f^{\bar{k}}_{\bar{i}p}+h_{j\bar{k},\bar{d}}f^{\bar{d}}_{\bar{p}}f^{j}_{ip}f^{\bar{k}}_{\bar{i}}+h_{j\bar{k},d}f^{d}_{p}f^{j}_{i\bar{p}}f^{\bar{k}}_{\bar{i}}.
\end{eqnarray*}
Choose $j,k,d,m$ as normal coordinate index of $h$, then
\begin{eqnarray*}& & \Delta_{\omega}u
\\&=&g^{l\bar{i}}_{,p\bar{p}}h_{j\bar{k}}f^{j}_{l}f^{\bar{k}}_{\bar{i}}+h_{j\bar{k},d\bar{m}}f^{j}_{i}f^{\bar{k}}_{\bar{i}}f^{d}_{p}f^{\bar{m}}_{\bar{p}}+h_{j\bar{k}}f^{j}_{ip}f^{\bar{k}}_{\bar{i}\bar{p}}
\\&=&R^{l\bar{i}}f^{j}_{l}f^{\bar{j}}_{\bar{i}}-R^{h}_{j\bar{k},d\bar{m}}f^{j}_{i}f^{\bar{k}}_{\bar{i}}f^{d}_{p}f^{\bar{m}}_{\bar{p}}+f^{j}_{ip}f^{\bar{j}}_{\bar{i}\bar{p}}
\end{eqnarray*}

  Set $h=\omega_{D}$. Thus along the K\"ahler-Ricci flow, using $R^{h}_{j\bar{k},d\bar{m}}\leq C_1I$ \\ (see Li-Rubinstein's appendix in \cite{JMR}),  and \[\frac{\partial}{\partial t}u=(R^{i\bar{l}}-\beta g^{i\bar{l}})h_{j\bar{k}}f_{i}^{j}f_{\bar{l}}^{\bar{k}}\ \textrm{over}\ M\setminus D,\]  we obtain
\begin{eqnarray*}& & (\Delta_{\omega}-\frac{\partial}{\partial t})u
\\&=&\beta f^{j}_{l}f^{\bar{j}}_{\bar{l}}-R^{h}_{j\bar{k},d\bar{m}}f^{j}_{i}f^{\bar{k}}_{\bar{i}}f^{d}_{p}f^{\bar{m}}_{\bar{p}}+f^{j}_{ip}f^{\bar{j}}_{\bar{i}\bar{p}}
\\&\geq& -C_1u^2+\beta u+f^{j}_{ip}f^{\bar{j}}_{\bar{i}\bar{p}}.
\end{eqnarray*}
By adding the weight $e^{\lambda \phi}u$  we compute
\begin{eqnarray*}& & (\Delta_{\omega}-\frac{\partial}{\partial t})e^{\lambda \phi}u
\\&\geq& \lambda e^{\lambda \phi}u(n-u)-C_1e^{\lambda \phi}u^2+\beta e^{\lambda \phi}u+e^{\lambda \phi}f^{j}_{ip}f^{\bar{j}}_{\bar{i}\bar{p}}+2\lambda e^{\lambda \phi}<\nabla_{\omega}\phi,\ \nabla_{\omega}u>
\\&-&\lambda u\frac{\partial \phi}{\partial t}e^{\lambda \phi}+\lambda^2u|\nabla_{\omega}\phi|^2e^{\lambda \phi}.
\end{eqnarray*}
Using the inequality $(\Sigma_ka_kb_k)^2\leq (\Sigma_ka_k^2)(\Sigma_lb_l^2)$ and the following estimate 
\begin{eqnarray*}& &|\nabla_{\omega}u|^2
\\&=&\Sigma_{i,k,p,s,t}f^{i}_{kp}f^{\bar{i}}_{\bar{k}}f^{\bar{s}}_{\bar{t}\bar{p}}f^{s}_{t}
\\&\leq & \Sigma_{p}\{(\Sigma_{i,k}|f^{i}_{kp}|^2)^{\frac{1}{2}}(\Sigma_{i,k}|f^{k}_{i}|^2)^{\frac{1}{2}}(\Sigma_{s,t}|f^{\bar{s}}_{\bar{t}\bar{p}}|^2)^{\frac{1}{2}}(\Sigma_{s,t}|f^{s}_{t}|^2)^{\frac{1}{2}}\}
\\&=&uf^{j}_{ip}f^{\bar{j}}_{\bar{i}\bar{p}},
\end{eqnarray*}

it's easy to see  $$e^{\lambda \phi}f^{j}_{ip}f^{\bar{j}}_{\bar{i}\bar{p}}+2\lambda e^{\lambda \phi}<\nabla_{\omega}\phi,\ \nabla_{\omega}u>+\lambda^2u|\nabla_{\omega}\phi|^2e^{\lambda \phi}\geq 0.$$
Thus let $C_2=C_1+1$ and $\lambda=-C_2$ we get
\begin{lem}\label{Chen-Lu Inequality}
\begin{eqnarray*}& & (\Delta_{\omega}-\frac{\partial}{\partial t})e^{-C_2\phi}u
\\&\geq&e^{-C_2\phi}u^2-Ce^{-C_2\phi}u+C_2u\frac{\partial \phi}{\partial t}e^{-C_2 \phi}.
\end{eqnarray*}
\end{lem}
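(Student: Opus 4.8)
The plan is to combine the two differential inequalities already derived in the excerpt: the bound for $(\Delta_\omega - \partial_t)u$ coming from the Bochner–type formula for the harmonic map $f = \mathrm{id}$, and the effect of multiplying by the weight $e^{\lambda\phi}$. The starting point is the inequality
\[
(\Delta_\omega - \tfrac{\partial}{\partial t})u \;\geq\; -C_1 u^2 + \beta u + f^{j}_{ip}f^{\bar{j}}_{\bar{i}\bar{p}},
\]
which has already been established over $M\setminus D$ using $R^h_{j\bar k, d\bar m}\leq C_1 I$ from Li–Rubinstein's appendix in \cite{JMR} and the evolution $\frac{\partial}{\partial t}u = (R^{i\bar l}-\beta g^{i\bar l})h_{j\bar k}f^j_i f^{\bar k}_{\bar l}$. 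First I would record the standard product rule for the weighted quantity: writing $w = e^{\lambda\phi}u$, one has
\[
(\Delta_\omega - \tfrac{\partial}{\partial t})(e^{\lambda\phi}u) = e^{\lambda\phi}(\Delta_\omega - \tfrac{\partial}{\partial t})u + \lambda e^{\lambda\phi}u\,(\Delta_\omega\phi - \tfrac{\partial\phi}{\partial t}) + 2\lambda e^{\lambda\phi}\langle\nabla_\omega\phi,\nabla_\omega u\rangle + \lambda^2 u|\nabla_\omega\phi|^2 e^{\lambda\phi}.
\]
The crucial algebraic input is that along the flow $\Delta_\omega\phi = \mathrm{tr}_\omega(\omega - \omega_D - i\partial\bar\partial\phi)$ — or rather, since $f=\mathrm{id}$ is the map from $(M,\omega)$ to $(M,\omega_D)$, the quantity $\Delta_\omega\phi = n - u$ in the relevant normalization (here $u = \mathrm{tr}_\omega\omega_D$ is precisely $g^{i\bar l}(h_{j\bar k}f^j_i f^{\bar k}_{\bar l})$, and $\Delta_\omega$ of the potential relating $\omega$ to $\omega_D$ equals $n$ minus this trace). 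This is exactly why the term $\lambda e^{\lambda\phi}u(n-u)$ appears in the displayed inequality in the excerpt.

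Next I would substitute the Bochner inequality into the product-rule identity and collect the $u^2$ terms. Replacing $\Delta_\omega\phi$ by $n - u$ and $\frac{\partial\phi}{\partial t}$ by $u = \frac{\partial\phi}{\partial t}$ (the Ricci-potential function, which is bounded by Proposition \ref{Classical solution's C^0 estimate} — note the overload of the symbol $u$ is why the term $-\lambda u\frac{\partial\phi}{\partial t}e^{\lambda\phi}$ is kept explicit), one arrives at
\[
(\Delta_\omega - \tfrac{\partial}{\partial t})e^{\lambda\phi}u \;\geq\; \lambda e^{\lambda\phi}u(n-u) - C_1 e^{\lambda\phi}u^2 + \beta e^{\lambda\phi}u + e^{\lambda\phi}f^j_{ip}f^{\bar j}_{\bar i\bar p} + 2\lambda e^{\lambda\phi}\langle\nabla_\omega\phi,\nabla_\omega u\rangle - \lambda u\tfrac{\partial\phi}{\partial t}e^{\lambda\phi} + \lambda^2 u|\nabla_\omega\phi|^2 e^{\lambda\phi}.
\]
The coefficient of $u^2$ on the right is $-\lambda - C_1$ (times $e^{\lambda\phi}$); to make this positive and equal to $+1$ I would choose $\lambda = -(C_1+1) = -C_2$, which is exactly the choice in the excerpt. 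The three remaining ``bad'' gradient/Hessian terms — namely $e^{\lambda\phi}f^j_{ip}f^{\bar j}_{\bar i\bar p} + 2\lambda e^{\lambda\phi}\langle\nabla_\omega\phi,\nabla_\omega u\rangle + \lambda^2 u|\nabla_\omega\phi|^2 e^{\lambda\phi}$ — must be shown to be $\geq 0$. This is precisely the Cauchy–Schwarz step the excerpt indicates: using $(\sum_k a_k b_k)^2 \leq (\sum_k a_k^2)(\sum_k b_k^2)$ one proves $|\nabla_\omega u|^2 \leq u\, f^j_{ip}f^{\bar j}_{\bar i\bar p}$, and then completing the square in the cross term $2\lambda\langle\nabla_\omega\phi,\nabla_\omega u\rangle$ against $e^{\lambda\phi}f^j_{ip}f^{\bar j}_{\bar i\bar p}$ and $\lambda^2 u|\nabla_\omega\phi|^2 e^{\lambda\phi}$ shows their sum is nonnegative (one bounds $|\langle\nabla_\omega\phi,\nabla_\omega u\rangle| \leq |\nabla_\omega\phi||\nabla_\omega u| \leq |\nabla_\omega\phi|\,u^{1/2}(f^j_{ip}f^{\bar j}_{\bar i\bar p})^{1/2}$ and recognizes a perfect square).

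With those terms discarded, what survives is
\[
(\Delta_\omega - \tfrac{\partial}{\partial t})e^{-C_2\phi}u \;\geq\; e^{-C_2\phi}u^2 - C_2 n\, e^{-C_2\phi}u + \beta e^{-C_2\phi}u + C_2 u\tfrac{\partial\phi}{\partial t}e^{-C_2\phi},
\]
and absorbing $C_2 n$ and $\beta$ into a single constant $C$ (in the sense of Definition \ref{Convention on the constant}, since $C_2$ depends only on the allowed data) gives exactly the claimed inequality
\[
(\Delta_\omega - \tfrac{\partial}{\partial t})e^{-C_2\phi}u \;\geq\; e^{-C_2\phi}u^2 - C e^{-C_2\phi}u + C_2 u\tfrac{\partial\phi}{\partial t}e^{-C_2\phi}.
\]
The computation is entirely pointwise and valid on $M\setminus D$, which is all that is claimed. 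The main obstacle — such as it is — is purely bookkeeping: being careful with the double role of the symbol $u$ (trace quantity versus Ricci potential $\partial\phi/\partial t$), correctly identifying $\Delta_\omega\phi = n - u$, and organizing the Cauchy–Schwarz estimate so that the three higher-order terms genuinely cancel rather than merely being controlled. No new analytic input beyond the already-cited curvature bound of Li–Rubinstein and the $C^0$ bound of Proposition \ref{Classical solution's C^0 estimate} is needed; in particular no maximum principle or regularity across $D$ enters at this stage.
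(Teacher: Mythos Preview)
Your proposal is correct and follows essentially the same approach as the paper: the same product-rule expansion of $(\Delta_\omega - \partial_t)(e^{\lambda\phi}u)$, the same identification $\Delta_\omega\phi = n-u$, the same Cauchy--Schwarz bound $|\nabla_\omega u|^2 \leq u\, f^j_{ip}f^{\bar j}_{\bar i\bar p}$ to make the three gradient/Hessian terms a nonnegative perfect square, and the same choice $\lambda = -(C_1+1) = -C_2$. Your remark about the overloaded symbol $u$ is apt bookkeeping, and the parenthetical about the $C^0$ bound on $\partial\phi/\partial t$ is extraneous to the lemma itself (that bound is used only later, in the proof of Proposition~\ref{C^2 estimate}).
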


Now we are ready to prove the $C^{1,1}$-estimate.

\begin{proof}{of Proposition \ref{C^2 estimate}:}
From (\ref{Chen-Lu Inequality}), we obtain
\begin{eqnarray*}& & (\Delta_{\phi}-\frac{\partial}{\partial t})[e^{-C_2\phi}u+\epsilon|S|^{2\tau}]
\\&\geq&e^{C_2\phi}[e^{-C_2\phi}u+\epsilon|S|^{2\tau}]^2-C[e^{-C_2\phi}u+\epsilon|S|^{2\tau}]\\&+&C_2[e^{-C_2 \phi}u+\epsilon|S|^{2\tau}]\frac{\partial \phi}{\partial t}
+C\epsilon|S|^{2\tau}-C_2\epsilon|S|^{2\tau}\frac{\partial \phi}{\partial t}\\&-&2u\epsilon|S|^{2\tau}
-e^{C_2\phi}(\epsilon|S|^{2\tau})^2+\Delta_{\phi}\epsilon|S|^{2\tau}.
\end{eqnarray*}
Again similar to the proof of Proposition \ref{Classical solution's C^0 estimate}, since $e^{-C_2\phi}u\in C^{\alpha,\beta}[0,T^{\prime}]$ , then $\max( e^{-C_2\phi}u+\epsilon|S|^{2\tau})$ is attained in $M\setminus D$ when  $\tau < \alpha \beta$. Using $\Delta_t\epsilon|S|^{2\tau}\geq -\epsilon C(T^{\prime})$ (see formula (31) in \cite{WYQWF}), (\ref{Bounding the Ricci potential}), Prposition \ref{Classical solution's C^0 estimate}, and maximum-principle,  we have the following inequality 
$$\{e^{-C_2\phi}u+\epsilon|S|^{2\tau}\}_{p}\leq \epsilon C(T^{\prime})+C\{e^{-C_2\phi}\}_{p},$$
 where $p$ is the maximum point  of $e^{-C_2\phi}u+\epsilon|S|^{2\tau}$.
Thus by taking $\epsilon\rightarrow 0$,  we end up with 
\begin{equation}\label{C1,1  derivative is bounded by oscillation}
u\leq  Ce^{C_2 osc \phi}. 
\end{equation} 

(\ref{C1,1  derivative is bounded by oscillation}) means the following. Suppose $z_i, i\in (1,...n).$ are the normal coordinates of the background metric $\omega$ at a general point $p$ such that it also diagonalize $\sqrt{-1}\partial\bar{\partial}\phi$ at $p$, we have
$$\Sigma_i\frac{1}{1+\phi_{i\bar{i}}}\leq C_.$$
Since $\phi$ satisfies the equation 
\[\frac{(\omega_{D}+\sqrt{-1}\partial\bar{\partial}\phi)^n}{\omega_{D}^n}=e^{\frac{\partial \phi}{\partial t}-h_{\omega_D}-\beta\phi}\] and we have $$|\frac{\partial \phi}{\partial t}|+|\phi|\leq C ,$$
 we obtain  $$\frac{1}{C}\omega_{D}\leq \omega_{D}+\sqrt{-1}\partial\bar{\partial}\phi\leq C\omega_{D}.$$
\end{proof}
At this point, actually we've arrived at a simple proof of the long time existence when the complex dimension is $1$, with the help of the Harnack inequality. 
\begin{prop} \label{Long time existence when n=1}When $n=1$, the long time existence (Theorem \ref{long time existence of CKRF over Riemann surface}) follows from Proposition \ref{Classical solution's C^0 estimate}, \ref{C^2 estimate}, and Theorem 4.2 in \cite{WYQWF}, without involving the proof of Theorem \ref{Evans Krylov} in the next section. 
\end{prop}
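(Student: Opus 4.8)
The plan is to exploit that for $n=1$ the complex Monge--Amp\`ere operator in (\ref{ckrf potential equation}) collapses to a scalar semilinear second order operator, so the a priori bounds already in hand make the potential equation uniformly parabolic and one can close the regularity loop by a De Giorgi--Nash--Moser plus Schauder argument, bypassing the fully nonlinear (Evans--Krylov type) estimate of Theorem \ref{Evans Krylov} and the Liouville Theorem \ref{thm Liouville} behind it. By Theorem 4.2 of \cite{WYQWF} there is a (weak) conical K\"ahler--Ricci flow $\phi(t)$ on a maximal interval $[0,T)$ along which, by passing to the approximating smooth flows, the bounds of Propositions \ref{Classical solution's C^0 estimate} and \ref{C^2 estimate} hold uniformly on each $[0,T']$, $T'<T$; the task is to show $T=+\infty$ and that $\phi(t)$ is in fact a strong $C^{2,\alpha,\beta}$-flow with the regularity asserted in Theorem \ref{long time existence of CKRF over Riemann surface}. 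Writing $\omega_D=g\,\tfrac{\sqrt{-1}}{2}\,dz\wedge d\bar z$ in a local holomorphic chart, (\ref{ckrf potential equation}) reads
\[
\frac{\partial\phi}{\partial t}=\log\big(1+\Delta_{\omega_D}\phi\big)+\beta\phi+h_{\omega_D},
\]
and Propositions \ref{Classical solution's C^0 estimate}, \ref{C^2 estimate} give $|\phi|+|\partial_t\phi|\le C$ together with $\tfrac{1}{K}\le 1+\Delta_{\omega_D}\phi\le K$; in particular this equation is uniformly parabolic and $\log(1+\Delta_{\omega_D}\phi)$ is uniformly bounded.

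The next step is to obtain H\"older control of $\omega_\phi$ from the parabolic Harnack inequality. The Ricci potential $u=\partial_t\phi$ solves the linear equation (\ref{parabolic equation of the Ricci potential}), $\partial_t u=\Delta_t u+\beta u$, where $\Delta_t$ is the Laplace--Beltrami operator of $\omega_\phi$; passing to the polar coordinates of (\ref{equ def of g_E in polar coordinates}), this is a divergence-form operator with bounded measurable coefficients that is uniformly elliptic with respect to the Euclidean metric $g_E$, since by Proposition \ref{C^2 estimate} the metric $\omega_\phi$ is quasi-isometric to $\omega_D$ and hence to $g_E$. Because the relevant $W^{1,2}$-inequalities transfer between $\omega_\phi$ and $g_E$, the classical parabolic De Giorgi--Nash--Moser / Harnack estimate applies in our conical setting (cf. Appendix B and \cite{LSU,GT,HanLin}), yielding a uniform bound $|u|_{C^{\alpha,\alpha/2,\beta}([\epsilon,T'])}\le C$ for each fixed $\epsilon>0$. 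Combining this with $\phi\in C^{\alpha,\beta}$ (which follows from the $C^{1,1}$-bound and interior elliptic estimates in the $g_E$-coordinates) and $h_{\omega_D}\in C^{\alpha,\beta}$, the identity $1+\Delta_{\omega_D}\phi=e^{\,u-\beta\phi-h_{\omega_D}}$ gives $1+\Delta_{\omega_D}\phi\in C^{\alpha,\alpha/2,\beta}$; that is, $\omega_\phi$ is a uniformly $C^{\alpha,\alpha/2,\beta}$-family of conical metrics on $[\epsilon,T']$.

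Having $C^{\alpha,\alpha/2,\beta}$ coefficients, the conical parabolic Schauder theory of \cite{CYW} then gives $|\phi|_{2,\alpha,\beta}\le\mathbb{K}$ uniformly on $[\epsilon,T']$, and iterating produces uniform $C^{k,\alpha,\beta}$-bounds for every $k$ away from $t=0$, while the behaviour on $[0,\epsilon]$ is controlled by the short-time existence theorem of \cite{CYW}; this yields the two bulleted regularity statements of Theorem \ref{long time existence of CKRF over Riemann surface}. Finally, the uniform $C^{2,\alpha,\beta}$-bound on $[0,T')$ forces $\phi(t)\to\phi(T)$ in $C^{2,\alpha',\beta}$ as $t\to T$, with $\omega_D+\sqrt{-1}\partial\bar\partial\phi(T)$ an $(\alpha,\beta)$-conical metric; restarting the short-time existence theorem of \cite{CYW} at $\phi(T)$ continues the strong flow past $T$, so $T=+\infty$. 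This is precisely the conclusion of Theorem \ref{Evans Krylov}, reached here without Theorem \ref{thm Liouville}.

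The one genuinely delicate point is the Harnack / De Giorgi estimate near and across the divisor $D$: this is where one uses crucially that $\omega_\phi$ is quasi-isometric to $g_E$ in polar coordinates, so that the classical $W^{1,2}$ weak-solution theory transplants verbatim, a point already settled in \cite{WYQWF} and revisited in Appendix B; once this is granted the remainder is standard parabolic bootstrapping. This shortcut has no counterpart when $n\ge 2$: there the Monge--Amp\`ere operator $\log\det(g_{i\bar j}+\phi_{i\bar j})$ is genuinely fully nonlinear, the $C^{1,1}$-bound no longer linearizes the equation, and one is forced into the Evans--Krylov-type argument and hence into the Liouville Theorem \ref{thm Liouville}.
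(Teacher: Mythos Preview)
Your argument is essentially the same as the paper's, but you misidentify what Theorem 4.2 of \cite{WYQWF} is. It is \emph{not} an existence result for a weak flow; it is precisely the parabolic De Giorgi--Nash--Moser/Harnack estimate you rederive by hand in your second paragraph. In the paper's proof, one starts from the strong short-time flow already furnished by \cite{CYW}, uses Propositions \ref{Classical solution's C^0 estimate} and \ref{C^2 estimate} to verify the hypotheses of Theorem 4.2 in \cite{WYQWF}, and then quotes that theorem as a black box to get $|\partial_t\phi|_{\alpha,\alpha/2,\beta}\le C$ directly. So your detour through the Harnack argument is correct but redundant given the stated inputs of the Proposition.

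A second minor point: once you have $1+\Delta_{\omega_D}\phi\in C^{\alpha,\alpha/2,\beta}$, the parabolic Schauder step you invoke is unnecessary in $n=1$. In complex dimension one the form $\sqrt{-1}\partial\bar\partial\phi$ has a single component, and $\Delta_{\omega_D}\phi$ already encodes it (up to the $C^{\alpha,\beta}$ coefficient of $\omega_D$); thus $\Delta_{\omega_D}\phi\in C^{\alpha,\alpha/2,\beta}$ is exactly the statement $|\phi|_{2,\alpha,\beta}\le C$. The paper's proof makes this point in one line. Your final extension step (restarting short-time existence at $t=T$) matches the paper's reference to Step~3 of the proof of Theorem \ref{Evans Krylov}.
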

\begin{proof}{of Proposition \ref{Long time existence when n=1}:}
\ Proposition \ref{Classical solution's C^0 estimate}, Proposition  \ref{C^2 estimate}, and equation (\ref{ckrf potential equation}) say that the assumptions in Theorem 4.2 in \cite{WYQWF} are fulfilled. Thus  from  Theorem 4.2 in \cite{WYQWF},  there is a $\alpha>0$ such that  $|\frac{\partial \phi}{\partial t}|_{\frac{\alpha}{2},\alpha,\beta}\leq C,\ t\in [0,T^{\prime}]$ for any $T^{\prime}<T$. 
Since $n=1$, from the potential equation (\ref{ckrf potential equation}) we get $|\sqrt{-1}\partial \bar{\partial}\phi|_{\frac{\alpha}{2},\alpha,\beta}\leq C$, which says 
\begin{equation*}
|\phi|_{2,\alpha,\beta}\leq C\ \textrm{over}\ [0,T).
\end{equation*} 
By the discussions in Step 2 of the proof of Theorem \ref{Evans Krylov}, the flow can be extended beyond $T$. 
\end{proof}

\section{H\"older estimate for the second derivatives and proof of Theorem \ref{long time existence of CKRF over Riemann surface}.\label{section of Holder estimate for the second derivative}}
Based on Theorem \ref{thm Liouville}, we are able to prove Theorem               \ref{Evans Krylov}, which in turn implies our  main Theorem  \ref{long time existence of CKRF over Riemann surface} in an obvious way. 
Let us first introduce a new notion of H\"older radius, which is  motivated by the
Harmonic Radius in Anderson's work  \cite{Anderson}. 

From now on in this section, we work in the singular polar coordinates, unless otherwise specified. For the reader's convenience, we use  the main definitions from \cite{CYW}. Let $w_j,\ j=2 \cdots n $ be the tangential variables. We consider a basis of $(1,0)$ vectors as
   \begin{equation}\label{mathfrak a}\mathfrak{a}=\frac{1}{\sqrt{2}}(\frac{\partial }{\partial r}-\frac{\sqrt{-1}}{\beta r}\frac{\partial }{\partial \theta}), \frac{\partial }{\partial w_j}, j=2...n.
   \end{equation}
   Set $\xi=z^{\beta}=re^{i\beta\theta}$, notice that
   \[\frac{\partial^2}{\partial \xi \partial\bar{\xi}}=\frac{1}{4}[\frac{\partial^2  }{\partial r^2}+r^{-1}\frac{\partial  }{\partial r}+\frac{1}{\beta^2}r^{-2}\frac{\partial^2  }{\partial \theta^2}].\]
 In this  singular polar coordinates, 
  we define the polar $\sqrt{-1}\partial \bar{\partial}$-operator to be the operator with  the following basis.
   $$\frac{\partial^2}{\partial \xi \partial\bar{\xi}}, \mathfrak{a}\frac{\partial}{\partial \bar{w}_i},\bar{\mathfrak{a}}\frac{\partial}{\partial w_i},\frac{\partial^2}{\partial w_i \partial \bar{w}_j}, 2\leq i,j\leq n,$$
     By abuse of notation, the $"\sqrt{-1}\partial \bar{\partial}"$s  in the polar coordinates all mean  the polar $\sqrt{-1}\partial \bar{\partial}$-operator defined above.
     
     From now on, when we write "$[\ \cdot\ ]$", we mean  seminorm; when we write "$|\cdot|$", we mean norm (which  contain lower order terms). These definitions can be found in section 2 of \cite{CYW}.
\begin{rmk}\label{rmk omega beta has 0 ossilation in polar}In the polar coordinates, under the above basis, we have 
\[[\omega_{\beta}]_{\alpha,\beta}=0.\]
This means $\omega_{\beta}$ is a constant tensor.
\end{rmk}

\begin{Def}\label{Holder radius}H\"older radius: Let $K$ be  as in Proposition \ref{C^2 estimate}, let $\underline{K}$  and $\widehat{K}$ be two constants large enough. Given a point $p\in B_{0}(R)$ (in the polar coordinates),  and a $C^{\alpha,\beta}$-metric $\omega$ defined over $B_{0}(R)$, we define the H\"older radius $r_p$ of $\omega$ at a point $p\in B_{0}(R_{0})$  to be the largest radius (with respect to the Euclidean metric in the singular polar coordinates), such that there exists a potential $\phi$ in $B_{p}(r_p)$ which satisfies 
\begin{itemize}
\item $\omega=\sqrt{-1}\partial \bar{\partial}\phi$ over $B_{p}(r_{p})$,
$r_{p}\leq d_{\beta,E}(p,\partial B_{0}(R))$.
\item $\phi\in C^{2,\alpha,\beta}$, $[\phi]_{2,\alpha,\beta}\leq \delta_{0}r_{p}^{-\alpha}$,\ $[\phi]_{2,\beta}\leq \underline{K}$,\ $|\phi|_{0}\leq \widehat{K}r_p^2$,
\end{itemize}
where  $\delta_{0}$ is small enough with respect to  the $\delta$ in  Proposition \ref{prop Gap}. For the second item, the norms are defined in the polar coordinates, as in section 2 in \cite{CYW}. The balls are all with respect to  $d_{\beta,E}$, which  is the distance with respect to the Euclidean metric $g_{E}$ in the polar coordinates. 
\end{Def}




\begin{proof}{of Theorem \ref{Evans Krylov} and \ref{long time existence of CKRF over Riemann surface}:}

Step 1:  By the $C^{1,1}-$estimate in Proposition \ref{C^2 estimate},  using Theorem 4.2 in \cite{WYQWF} and equation (\ref{parabolic equation of the Ricci potential}), we deduce
   \begin{equation}\label{equ Bound on the hold norm of the Ricci potential}|\frac{\partial \phi}{\partial t}|_{\alpha^{\prime},\frac{\alpha^{\prime}}{2},\beta,[0,T)}\leq C,\ \textrm{for some}\        \alpha^{\prime}>0.\end{equation}
    Moreover,  by Theorem 4.2 in \cite{WYQWF}, the $C^{0}$-estimate in Proposition \ref{Classical solution's C^0 estimate}, and the $C^{1,1}$-estimate in Proposition \ref{C^2 estimate}, and (\ref{equ Bound on the hold norm of the Ricci potential}), we obtain 
     \begin{equation}\label{equ Bound on the hold norm of phi}|\phi|_{\alpha^{\prime},\frac{\alpha^{\prime}}{2},\beta,[0,T)}\leq C.\end{equation}
    by making $\alpha^{\prime}$ smaller if necessary.

   Step 2. In this step we show  $|\phi|_{2,\alpha,\beta,[0,T)}$ is uniformly bounded, for any $\alpha<\alpha^{\prime}$. We follow the  Anderson-type  argument as in the proof of Lemma 2.2 in \cite{Anderson}.
By abuse a notation, we still denote $\phi$ as  the potential of $\omega$ near $D$ i.e $\omega=\sqrt{-1}\partial \bar{\partial} \phi$.

  Denote $\omega_{i}=\omega(t_{i})$, $t_{i}\in [0,T)$ is a time sequence. Denote $$F_{i}=(\frac{\partial \phi}{\partial t}-\beta\phi+f)|_{t_{i}},$$
  where $f$ is a function depending on $\omega_{D}$. By (\ref{equ Bound on the hold norm of the Ricci potential}), we have 
  \begin{equation}\label{equ F Ricci potential bounds}
  |F_{i}|_{\alpha^{\prime},\beta}\leq C.
  \end{equation}
 
  Without loss of generality, it suffices to show in  $B_{0}(R_{0})$, $R_{0}$ sufficiently small with respect to the background geometry (so a local coordinate system is defined),    $\frac{r_{p,\omega_{i}}}{d_{\beta,E}(p, \partial B_{0}(R_{0}))}$ is uniformly bounded away from $0$ independent of  $p$ and $i$.  

  We prove by contradiction. By Theorem \ref{Bootstrapping of Holder metrics} and Proposition \ref{Solvability of the ddbar equation}, if $r_{p,\omega_{i}}$ is not uniformly bounded away from $0$ independent of  $p$ and $i$, then   there exists a subsequence $(p_i,\omega_i),\ i\rightarrow \infty$ such that  
\begin{equation*} \frac{r_{p_{i},\omega_{i}}}{d_{\beta,E}(p_{i}, \partial B_{0}(R_{0}))}\rightarrow 0,\ p_{i}\rightarrow D,\   \textrm{and }
\end{equation*} 
\[0<\frac{r_{p_{i},\omega_{i}}}{d_{\beta,E}(p_{i},\partial B_{0}(R_{0}))}\leq 
2 \min_{p} \frac{r_{p,\omega_{i}}}{d_{\beta,E}(p, \partial B_{0}(R_{0}))}.\]
Next we consider the rescaled metric 
$\widehat{\omega}_i=r_{p_i,\omega_i}^{-2}T^{\star}_{r_{p_i,\omega_i}}\omega_i$ at $p_i$, where $T_{R}$ is defined as 
$$\widehat{z}\circ T_{R}=R^{\frac{1}{\beta}}z, \widehat{w}_{i}\circ T_{R}=Rw_{i}.$$

 The following properties  of  $\widehat{\omega}_i$ are obvious from the rescaling hypothesis and 
Proposition \ref{C^2 estimate} ($\widehat{\omega}_{\beta}$ and $\widehat{d}_{\beta,E}$ are the rescaled metric and distance in the rescaled coordinates).
\begin{itemize}
\item \begin{equation}\label{equ rescaled Monge ampere equation}\widehat{\omega}_i^n=e^{\widehat{F}_{i}}\widehat{\omega}_{\beta}^n. 
\end{equation} $\widehat{\omega}_i$ is defined on $B_{0}\{\frac{R_{0}}{r_{p_{i},\omega_{i}}}\}$,  $\widehat{F}_{i}$ is the pull back of $F_{i}$ via the rescaling map.
\item $\widehat{d}_{\beta,E}(p_{i}, \partial B_{0}(\frac{R_{0}}{r_{p_i,\omega_i}}))\rightarrow \infty$. 

\item For the same $K$ as in Proposition \ref{C^2 estimate}, we have 
\[\frac{1}{K} \widehat{\omega}_{\beta}\leq \widehat{\omega}_i\leq K\widehat{\omega}_{\beta}.\] 
\item By definition, for any $p\in  B_{0}(\frac{R_{0}}{r_{p_i,\omega_i}}))$ and $i$, we have $$r_{\widehat{\omega}_i,p}\geq \frac{\widehat{d}_{\beta,E}(p, \partial B_{0}(\frac{R_{0}}{r_{p_i,\omega_i}}))}{3\widehat{d}_{\beta,E}(p_{i}, \partial B_{0}(\frac{R_{0}}{r_{p_i,\omega_i}}))}.$$
Notice $\widehat{d}_{\beta,E}(p_{i}, \partial B_{0}(\frac{R_{0}}{r_{p_i,\omega_i}}))\rightarrow \infty$.
Consequently, suppose $\widehat{d}_{\beta,E}(p_{i}, p)<\lambda<\infty$ with respect to the rescaled Euclidean metric in polar coordinates, we have 
\begin{equation}\label{equ Holder radius bounded from below when i big for every pt}\liminf_{i\rightarrow \infty}r_{\widehat{\omega}_i,p}\geq \frac{1}{3}.
\end{equation} 
\item At $p_i$, we have $r_{\widehat{\omega}_i,p_i}= 1$.
\end{itemize}
\begin{clm}\label{Claim Bootstrapping in Elliptic estimate}For any $p$, when $i$ is large enough, the rescaled potential $\widehat{\phi}_i$ satisfies 
    \[|\widehat{\phi}_i|_{2,\alpha^{\prime},\beta,B_{p}(\frac{1}{100})}\leq C.\]
    \end{clm}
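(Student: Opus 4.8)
The plan is to establish Claim \ref{Claim Bootstrapping in Elliptic estimate} by a local elliptic bootstrap on the rescaled Monge-Amp\`ere equation \eqref{equ rescaled Monge ampere equation}, using the uniform H\"older-radius lower bound \eqref{equ Holder radius bounded from below when i big for every pt} at every fixed point. Fix $p$ and note that for $i$ large, $r_{\widehat\omega_i,p}\ge \tfrac13$, so by Definition \ref{Holder radius} there is a potential $\psi_i$ on $B_p(\tfrac13)$ with $\widehat\omega_i=\sqrt{-1}\partial\bar\partial\psi_i$, $[\psi_i]_{2,\alpha',\beta}\le \delta_0 (\tfrac13)^{-\alpha'}$, $[\psi_i]_{2,\beta}\le \underline K$, and $|\psi_i|_0\le \widehat K(\tfrac13)^2$. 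Thus $\widehat\omega_i$ is uniformly bounded in $C^{\alpha',\beta}$ on $B_p(\tfrac13)$, and together with the uniform two-sided bound $\tfrac1K\widehat\omega_\beta\le\widehat\omega_i\le K\widehat\omega_\beta$ this makes the equation uniformly (conically) elliptic with $C^{\alpha',\beta}$ coefficients on that ball.

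The next step is to upgrade the potential of $\widehat\omega_i$ itself (not just some potential $\psi_i$ of the metric) to a $C^{2,\alpha',\beta}$ bound on the slightly smaller ball $B_p(\tfrac1{100})$. Since two potentials of the same K\"ahler form differ by a pluriharmonic function, and $\widehat\phi_i$ (the rescaled flow potential) already satisfies \eqref{equ rescaled Monge ampere equation} with $\widehat F_i$ uniformly bounded in $C^{\alpha',\beta}$ by \eqref{equ F Ricci potential bounds}, I would apply the interior conical Schauder estimate for the linearized (conical) Laplace-type operator. Concretely, differentiating/linearizing the Monge-Amp\`ere equation gives $\Delta_{\widehat\omega_i}(\text{second derivatives of }\widehat\phi_i)$ controlled by $C^{\alpha',\beta}$ data; feeding the $C^{\alpha',\beta}$ control of the coefficients $\widehat g_i^{k\bar l}$ and the right-hand side into the conical Schauder theory (the $C^{2,\alpha,\beta}$ interior estimate, valid for all $\alpha<\alpha'$, as recorded in Section 2 of \cite{CYW} and reproved here via the weak-solution machinery in Appendix B) yields $[\widehat\phi_i]_{2,\alpha,\beta,B_p(1/100)}\le C$. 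The lower-order part of the $C^{2,\alpha,\beta}$ norm is controlled by $|\psi_i|_0$ and $[\psi_i]_{2,\beta}$ after normalizing the pluriharmonic ambiguity (e.g.\ fixing $\widehat\phi_i$ to agree with a bounded normalization at the center, which is legitimate since only $\sqrt{-1}\partial\bar\partial\widehat\phi_i$ enters the geometry), giving the full bound $|\widehat\phi_i|_{2,\alpha',\beta,B_p(1/100)}\le C$.

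The main obstacle I anticipate is the conical Schauder step: one cannot invoke the classical Evans--Krylov / Schauder theory directly because the metric is only quasi-isometric to Euclidean away from $D$ and genuinely singular along $D$. The resolution, following the philosophy spelled out in the introduction, is that after passing to the singular polar coordinates the operator $\Delta_{\widehat\omega_i}$ has $C^{\alpha',\beta}$ coefficients in those coordinates and is uniformly elliptic there, so the interior Schauder estimate in the $C^{2,\alpha,\beta}$ scale (with the $\sqrt{-1}\partial\bar\partial$ understood as the polar operator defined above) applies; the loss from $\alpha'$ to any $\alpha<\alpha'$ is the usual one. A secondary technical point is uniformity in $i$: the constant $C$ must not depend on $i$, which it does not, because all the input bounds --- the H\"older radius lower bound \eqref{equ Holder radius bounded from below when i big for every pt}, the ellipticity constant $K$ from Proposition \ref{C^2 estimate}, and $|\widehat F_i|_{\alpha',\beta}\le C$ from \eqref{equ F Ricci potential bounds} --- are $i$-independent, and the rescaling is conformally natural for the Monge-Amp\`ere operator so that no scaling factors degenerate. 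Once Claim \ref{Claim Bootstrapping in Elliptic estimate} is in hand, it feeds a diagonal/Arzel\`a--Ascoli argument to extract a $C^{2,\alpha,\beta}_{loc}$-limit $\widehat\omega_\infty$ on all of $\mathbb C^n$ that is Ricci-flat (since $\widehat F_i\to 0$ after rescaling) with the two-sided bound by $\omega_\beta$, contradicting the Liouville Theorem \ref{thm Liouville} via the normalization $r_{\widehat\omega_i,p_i}=1$.
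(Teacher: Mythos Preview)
Your setup is right---the H\"older-radius lower bound \eqref{equ Holder radius bounded from below when i big for every pt} does produce a local potential $\psi_i$ with uniform control on a ball of definite size---but the bootstrap step has a genuine gap. First, Definition~\ref{Holder radius} only bounds $[\psi_i]_{2,\alpha,\beta}$, not $[\psi_i]_{2,\alpha',\beta}$ (recall $\alpha<\alpha'$ was fixed at the start of Step~2); so your assertion that $\widehat\omega_i$ is ``uniformly bounded in $C^{\alpha',\beta}$'' is not yet justified. Consequently the linearized operator $\Delta_{\widehat\omega_i}$ has coefficients that are a priori only $C^{\alpha,\beta}$, and generic interior Schauder theory with such coefficients cannot return a $C^{2,\alpha',\beta}$ bound. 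Your proposed route ``differentiating/linearizing the Monge--Amp\`ere equation gives $\Delta_{\widehat\omega_i}(\text{second derivatives of }\widehat\phi_i)$ controlled by $C^{\alpha',\beta}$ data'' does not close either: differentiating costs a derivative on $\widehat F_i$, and there is no mechanism here to recover it.

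The paper's argument is different and uses a feature you treat as incidental: the \emph{smallness} of $\delta_0$. After normalizing $\widehat\omega_i(0)=\omega_\beta$, the small $C^{\alpha,\beta}$ oscillation forces $i\partial\bar\partial\psi_i$ to be $C^0$-close to the identity on the ball. Since $M\mapsto\det M-\mathrm{tr}\,M$ vanishes to second order at the identity, one obtains
\[
[\det(i\partial\bar\partial\psi_i)-\Delta_\beta\psi_i]^{(\star)}_{\alpha'}\le\epsilon\,[i\partial\bar\partial\psi_i]^{(\star)}_{\alpha'}
\]
with $\epsilon=\epsilon(\delta_0)$ small. Now $\det(i\partial\bar\partial\psi_i)=e^{\widehat F_i}$ is already $C^{\alpha',\beta}$-bounded by \eqref{equ F Ricci potential bounds}, so $[\Delta_\beta\psi_i]^{(\star)}_{\alpha'}\le \epsilon\,[i\partial\bar\partial\psi_i]^{(\star)}_{\alpha'}+C$. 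The Schauder estimate for the \emph{model} Laplacian $\Delta_\beta$ (constant coefficients in polar coordinates, valid at the $\alpha'$ level) then gives $[i\partial\bar\partial\psi_i]^{(\star)}_{\alpha'}\le C\epsilon\,[i\partial\bar\partial\psi_i]^{(\star)}_{\alpha'}+C|\psi_i|_0$, and the small term is absorbed. The two points you are missing are: (i) one must work with $\Delta_\beta$, not $\Delta_{\widehat\omega_i}$, so that the Schauder constant is independent of the unknown and operates at exponent $\alpha'$; and (ii) the choice $\delta_0\ll 1$ in Definition~\ref{Holder radius} is not cosmetic---it is exactly what makes the absorption possible. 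This is the content of Proposition~\ref{prop Gap} and the trick around (37) in \cite{CDS2}.
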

 To prove the claim,    without loss of generality we consider $p=0$. In $B_{0}(\frac{1}{2})$, by (\ref{equ Holder radius bounded from below when i big for every pt}),  when $i$ is large enough,   there exists a potential $\widehat\phi_{p,i}$ such that 
 \begin{itemize}
\item $\widehat{\omega}_{i}=\sqrt{-1}\partial \bar{\partial}\widehat\phi_{p,i}$,
\item $\widehat\phi_{p,i}\in C^{2,\alpha^{\prime},\beta}$, $[\widehat\phi_{p,i}]_{2,\alpha,\beta,B_{0}(\frac{1}{2})}\leq 4\delta_{0}$,\ $[\widehat\phi_{p,i}]_{2,\beta,B_{0}(\frac{1}{2})}\leq \underline{K}$,\\ $|\widehat\phi_{p,i}|_{0,B_{0}(\frac{1}{2})}\leq 8\widehat{K}$.
\end{itemize}
   Since $\delta_{0}$ is small enough in the sense of Definition \ref{Holder radius}, the proof of Proposition \ref{prop Gap} or the discussion of (37) in \cite{CDS2} directly imply the claim is true. 
   For the reader's convenience, we include the crucial step here. Without loss of generality, we assume $\widehat{\omega}_{i}$ satisfies the normalization condition at the point $0$: $\widehat{\omega}_{i}(0)=\omega_{\beta}$.   By the small ossilation condition ($[\widehat\phi_{p,i}]_{2,\alpha,B_{0}(\frac{1}{2}),\beta}\leq 4\delta_{0}$), we deduce  
\begin{equation}
[det(i\partial \bar{\partial}\widehat\phi_{p,i})-\Delta \widehat\phi_{p,i}]^{(\star)}_{\alpha^{\prime},B_{0}(\frac{1}{2})}
\leq \epsilon [i\partial \bar{\partial}\widehat\phi_{p,i}]_{\alpha^{\prime},B_{0}(\frac{1}{2})}^{(\star)},
\end{equation}
where $\epsilon$ is small enough with respect to $\delta_{0}$. 
Since $$det(i\partial \bar{\partial}\widehat\phi_{p,i})=e^{\widehat{F}_{i}}\in C^{\alpha^{\prime}},\ \textrm{in polar coordinates},  \alpha^{\prime}>\alpha,$$ combining (\ref{equ F Ricci potential bounds}), we deduce 
\begin{equation}\label{equ laplacian controlled by small constant times ddbar}
[\Delta \widehat\phi_{p,i}]^{(\star)}_{\alpha^{\prime},B_{0}(\frac{1}{2})}
\leq \epsilon [i\partial \bar{\partial}\widehat\phi_{p,i}]_{\alpha^{\prime},B_{0}(\frac{1}{2})}^{(\star)}+[e^{\widehat{F}_{i}}]_{\alpha^{\prime},B_{0}(\frac{1}{2})}\leq \epsilon [i\partial \bar{\partial}\widehat\phi_{p,i}]_{\alpha^{\prime},B_{0}(\frac{1}{2})}^{(\star)}+C.
\end{equation}
   Then  continuing as in the discussion after (37) in Chen-Donaldson-Sun's work \cite{CDS2}, or as (\ref{equ CDS small osc trick 1})--(\ref{equ bounding iddbar by c0 of phi})  in  the proof of Proposition \ref{prop Gap}, Claim \ref{Claim Bootstrapping in Elliptic estimate} is proved. 
  By (\ref{equ Bound on the hold norm of phi}) and (\ref{equ Bound on the hold norm of the Ricci potential}), the following crucial estimate is true. 
   \begin{equation}\label{Rescaled limit of the exponential term in the ckrf equation}
   \lim_{i\rightarrow \infty}e^{\widehat{F}_{i}}=C_{1}\ \textrm{uniformly on compact sets  over}\ \mathbb{C}^{n}\ \textrm{in}\
   C^{\alpha,\beta}-\textrm{topology},
   \end{equation}
   where $C_{1}$ is a positive constant. 
   
     Claim $\ref{Claim Bootstrapping in Elliptic estimate}$ implies  $\widehat{\omega}_{i}$ subconverge to  a $\omega_{\infty}$ over $\mathbb{C}^{n}$ locally  in $C^{\alpha,\beta}$-topology. Moreover, 
   \begin{itemize}
\item $\omega_{\infty}^n=C_{1}\omega_{\beta}^n$,
\item 
\begin{equation}\label{C1,1 bound for the rescaled limit}\frac{1}{K} \widehat{\omega}_{\beta}\leq \omega_{\infty}\leq K\widehat{\omega}_{\beta},\end{equation}
\item For any $p\in C^{n}$,   we have $r_{\omega_{\infty},p}\geq \frac{1}{3}$.
\end{itemize}

 We show in the following two cases, the above all lead to  contradictions.

 Case 1:  Suppose $d_{\beta,E}(p_{\infty},D)\leq  \frac{1000}{\beta\sin \beta\pi}$. By translation along the tangential direction of $D$, we can assume $d_{\beta,E}(p_{\infty},D)=d_{\beta,E}(p_{\infty},0)$. Under the translation along the tangential direction of $D$, the form of equation (\ref{equ rescaled Monge ampere equation}) is invariant, because $\omega_{\beta}$ is invariant under these tangential translations. This case  is the main issue (while the other cases are easier to handle).   From Theorem \ref{thm Liouville}, for some linear transformation $L$ which preserves $D= (0)\times C^{n-1}$, we have 
\[\omega_{\infty}=L^{\star}\omega_{\beta}.\]
By the proof of Proposition 25 in \cite{CDS2} and (\ref{C1,1 bound for the rescaled limit}), we obtain 
\begin{equation}\label{HE bound on a11 of the matrix L}
\frac{1}{K}\leq |a_{11}|^{2\beta}\leq K,\ \textrm{where }\ a_{11}\ \textrm{is the}\ (1,1)-\textrm{element of }\ L.
\end{equation}
Along the tangential direction of $D$, $L$ reduces to a $(n-1)\times (n-1)$ matrix $L_{T}$. By (\ref{C1,1 bound for the rescaled limit}) again, we get
\begin{equation}\label{HE bound on LT}
 |L_{T}|\leq CK^{\frac{1}{2}}.
\end{equation}

\begin{clm}\label{HE clm bound on the model potential}
Suppose $dist_{\beta}(p_{\infty},D)\leq \frac{1000}{\beta\sin \beta\pi}$. 
We can choose $\phi_{\infty}$ such that 
 $\omega_{\infty}=\sqrt{-1}\partial \bar{\partial}\phi_{\infty}$ and 
 \[|\phi_{\infty}|\leq CK\ \textrm{over}\ B_{p_{\infty}}(90).\]
\end{clm}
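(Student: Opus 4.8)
The plan is to take for $\phi_\infty$ the $L$-pullback of the canonical potential of the model cone metric and then estimate this explicit function on $B_{p_\infty}(90)$, using the structural bounds (\ref{HE bound on a11 of the matrix L}), (\ref{HE bound on LT}) on $L$ together with the $C^{1,1}$-bound (\ref{C1,1 bound for the rescaled limit}). First I would recall that, in the (rescaled) polar/holomorphic coordinates, $\widehat\omega_\beta=\sqrt{-1}\partial\bar\partial\phi_\beta$ for the globally defined potential $\phi_\beta=\tfrac12\bigl(|z|^{2\beta}+\sum_{j=2}^{n}|u_j|^{2}\bigr)$ on $\mathbb{C}^{n}$. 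Since $L$ is a holomorphic linear map, $\partial\bar\partial$ commutes with $L^\star$, so $\phi_\infty:=L^\star\phi_\beta=\phi_\beta\circ L$ is a potential for $\omega_\infty=L^\star\widehat\omega_\beta$ on all of $\mathbb{C}^{n}$. Because $L$ preserves $D=\{z=0\}$, in the holomorphic coordinates it is block lower triangular, $L(z,u)=(a_{11}z,\ bz+L_Tu)$ with $L_T$ the tangential $(n-1)\times(n-1)$ block and $b\in\mathbb{C}^{n-1}$, so
\[
\phi_\infty=\tfrac12\bigl(|a_{11}|^{2\beta}|z|^{2\beta}+|bz+L_Tu|^{2}\bigr).
\]

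The only genuinely substantive point is the elimination of the off-diagonal coefficient $b$. A direct computation shows that the $z\bar z$-component of $\omega_\infty=L^\star\widehat\omega_\beta$ is $\tfrac12\bigl(\beta^2|a_{11}|^{2\beta}|z|^{2\beta-2}+|b|^2\bigr)$, while that of $\widehat\omega_\beta$ is $\tfrac12\beta^2|z|^{2\beta-2}$. Since (\ref{C1,1 bound for the rescaled limit}) gives $\omega_\infty\le K\widehat\omega_\beta$ as Hermitian forms, comparing $z\bar z$-entries yields $|b|^2\le\beta^2\bigl(K-|a_{11}|^{2\beta}\bigr)|z|^{2\beta-2}$ for all $z\ne0$; letting $|z|\to\infty$ and using $2\beta-2<0$ forces $b=0$ (evaluating at $|z|=1$ already gives the weaker bound $|b|\le\beta K^{1/2}$, which would suffice below). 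Hence $\phi_\infty=\tfrac12\bigl(|a_{11}|^{2\beta}|z|^{2\beta}+|L_Tu|^2\bigr)$.

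It then remains to bound $\phi_\infty$ on $B_{p_\infty}(90)$. As arranged at the start of Case 1, a tangential translation puts $d_{\beta,E}(p_\infty,0)=d_{\beta,E}(p_\infty,D)\le\frac{1000}{\beta\sin\beta\pi}$, so $B_{p_\infty}(90)\subset B_0(\mathfrak{r}_\beta)$ with $\mathfrak{r}_\beta:=90+\frac{1000}{\beta\sin\beta\pi}$. On this ball the polar coordinates obey $r\le\mathfrak{r}_\beta$ and $|u|\le\mathfrak{r}_\beta$, and since $\xi=z^\beta=re^{i\beta\theta}$ we have $|z|^{2\beta}=r^2\le\mathfrak{r}_\beta^2$. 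Combining with $|a_{11}|^{2\beta}\le K$ from (\ref{HE bound on a11 of the matrix L}) and $|L_T|\le CK^{1/2}$ from (\ref{HE bound on LT}),
\[
|\phi_\infty|\le\tfrac12\bigl(|a_{11}|^{2\beta}|z|^{2\beta}+|L_T|^2|u|^2\bigr)\le\tfrac12\bigl(1+C^2\bigr)K\,\mathfrak{r}_\beta^2\le CK\quad\text{on }B_{p_\infty}(90),
\]
with $C$ depending only on $n$ and $\beta$, which is the claimed estimate.

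The main obstacle is precisely the elimination (or mere boundedness) of $b$: one has to extract from the \emph{global} two-sided bound $\frac1K\widehat\omega_\beta\le\omega_\infty\le K\widehat\omega_\beta$ on $\mathbb{C}^{n}$ that the $z\mapsto u$ part of $L$ cannot grow, which is what confines $\omega_\infty$ to the same product-type behaviour as $\widehat\omega_\beta$ over the bounded region $B_{p_\infty}(90)$. Once the explicit potential $\phi_\beta$ and the elementary identity $|z|^{2\beta}=r^2$ are in hand, the rest is bookkeeping on a region of uniformly bounded Euclidean size.
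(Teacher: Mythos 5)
Your proposal is correct and takes essentially the same approach as the paper: both use the explicit model potential $\phi_\infty=L^{\star}\bigl(|z|^{2\beta}+\sum_j|u_j|^2\bigr)$ (up to an irrelevant normalization factor of $2$) together with the structural bounds (\ref{HE bound on a11 of the matrix L}) and (\ref{HE bound on LT}) on the matrix $L$. Where the paper cites the proof of Proposition 25 of \cite{CDS2} to control the remaining off-diagonal $z\mapsto u$ block of $L$, you carry this out directly — comparing the $z\bar z$-entries of $\omega_\infty$ and $K\widehat\omega_\beta$ and letting $|z|\to\infty$ to conclude $b=0$ — and you correctly observe that even the crude bound $|b|\le \beta K^{1/2}$ (from $|z|=1$) would suffice for the $L^\infty$ estimate on the bounded ball $B_{p_\infty}(90)$.
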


The proof of Claim \ref{HE clm bound on the model potential} is as follows.   Consider the most natural potential function $$\phi_{\infty}= L^{\star}(|z|^{2\beta}+\Sigma_{j=2}^{n}|w_{j}|^2).$$ 
 We obviously have   $\omega_{\infty}=L^{\star}\omega_{\beta}=\sqrt{-1}\partial \bar{\partial}\phi_{\infty}$.  By (\ref{HE bound on a11 of the matrix L}), (\ref{HE bound on LT}), and  the proof of  Proposition 25 in \cite{CDS2}, we directly obtain 
 \[|\phi_{\infty}|\leq CK\ \textrm{over}\ B_{p_{\infty}}(90).\]

   Obviously, we also have 
   \[[\phi_{\infty}]_{2,\beta}\leq CK\ \textrm{over}\ B_{p_{\infty}}(90),\ [\phi_{\infty}]_{2,\alpha,\beta}=0.\]
   
   The proof of Claim \ref{HE clm bound on the model potential} is completed.\\ 
   
      Now, when $i$ is sufficiently large,  we  perturb $\phi_{\infty}$ to be a potential $\underline{\phi}_i$ defined in $B_{p_i,\omega_i}(2)$ which satisfies the conditions in Definition \ref{Holder radius}, thus a contradiction will be obtained.  We  consider the equation 
 \[\sqrt{-1}\partial \bar{\partial}v_i=\omega_{i}-\omega_{\infty}.\]
 Notice  $|\omega_{i}-\omega_{\infty}|_{\alpha,\beta}\rightarrow 0$, uniformly over compact subdomains of $C^{n}$.  Using Proposition \ref{Solvability of the ddbar equation},
we obtain a solution $v_i$ such that 
\begin{equation}\label{equ bound on vi}|v_i|_{2,\alpha,\beta, B_{p_{i}}(2)}\leq C|\omega_{i}-\omega_{\infty}|_{\alpha,\beta, B_{p_{i}}(50)}.
\end{equation}
Thus the identity $\omega_i=\omega_{\infty}+\sqrt{-1}\partial \bar{\partial}v_i$ holds
in $B_{p}(2)$. By Proposition \ref{Solvability of the ddbar equation}
 and (\ref{equ bound on vi}), we have when $i$ is sufficient large that 
\begin{equation}\label{Bound on vi}    
[v_{i}]_{2,\alpha,\beta, B_{p_{i}}(2)}\leq \frac{\delta_{0}}{100}.
\end{equation}
Let $\underline{\phi}_{i}=\phi_{\infty}+v_i$. Notice the fact $[\omega_{\infty}]_{\alpha,\beta}=[L^{\star}\omega_{\beta}]_{\alpha,\beta}=0$ (as in Remark \ref{rmk omega beta has 0 ossilation in polar}) is quite important to show  the ossillation before rescaling is small. From  Claim \ref{HE clm bound on the model potential}, (\ref{equ Bound on the hold norm of the Ricci potential}), (\ref{equ Bound on the hold norm of phi}), (\ref{C1,1 bound for the rescaled limit}), and  (\ref{Bound on vi}),  by making  $\underline{K}$ and $\widehat{K}$ large enough, we obtain 
\begin{equation*}
[\underline{\phi}_{i}]_{2,\alpha,\beta,B_{p}(2)}\leq \frac{\delta_{0}}{100},\ [\underline{\phi}_{i}]_{2,\beta,B_{p}(2)}\leq \frac{\underline{K}}{2},\ |\underline{\phi}_{i}|_{0,B_{p}(2)}\leq \frac{\widehat{K}}{2}.
\end{equation*}
This is a contradiction since we assumed that there is no such potential for $\widehat{\omega}_i$ in a ball (centered at $p_i$) of radius larger than $1$!

Case 2:  Suppose $\infty> d_{\beta,E}(p_{\infty},D)> \frac{1000}{\beta\sin \beta\pi}$.  By translation along the tangential direction of $D$, we can also assume $d_{\beta,E}(p_{\infty},D)=d_{\beta,E}(p_{\infty},0)$. This case is easier, since before taking limit, the coordinate  $u=z^{\beta}$ is well defined in $B_{p_{i}}(90)$. This is because $B_{p_{i}}(90)$ does not cover a whole period $[0,2\pi]$ in this case, then we can choose the
   single-value branch of $z^{\beta}$ over  $[0,2\pi)$ in $B_{p_{i}}(90)$. Denote $p_{i}=(z^{\beta}_{i}, w_{1,i},...,w_{n-1,i})$.  Notice with respect to the coordinate $u=z^{\beta}, w_2,..., w_{n}$, we have \begin{equation}\label{equ omegabeta equals omegaE}\omega_{\beta}=\omega_{Euc},
   \end{equation}
   where $\omega_{Euc}$ is the Euclidean metric in the coordinates 
   $u, w_2,..., w_{n}$. 
   
   Hence, we still consider the origin $0$ as our base point. By exactly the small ossilation argument in case 1,  the  rescaled limit $\omega_{\infty}$ still equals $L^{\star}\omega_{\beta}$. Using (\ref{equ omegabeta equals omegaE}) and Proposition \ref{Solvability of the ddbar equation}, we  perturb the following potential 
   \[\underline{\phi}_{\infty}=L^{\star}(|z^{\beta}-z^{\beta}_{p_{\infty}}|^{2}+\Sigma_{j=2}^{n}|w_{j}-w_{j,p_{\infty}}|^2)\]
   to a potential before  $i$ goes to $\infty$, in $B_{p_{i}}(2)$ when $i$ is large enough.
   Then we get     the same  contradiction as in Case 1 to the hypothesis that there is no such potential in ball (centered at $p_{i}$) with radius larger than $1$ !

Case 3. Suppose $d_{\beta,E}(p_{i},D)\rightarrow \infty$. By translation along the tangential direction of $D$, we still  assume $d_{\beta,E}(p_{i},D)=d_{\beta,E}(p_{i},0)$. This  case is actually easier than Case 1 and  Case 2, because the almost smallest  H\"older radius occurs far away from $D$.  The argument is similar to Case 2. The difference is that, since in Case 3 the distance from $p_{i}$ to $D$ goes to $\infty$, we should choose $p_{i}$ as the base point of our convergence, not $0$ (as in case 1 and 2) anymore.  Still suppose  $p_{i}=(z_{i}, w_{2,i},...,w_{n,i})$, we denote the following coordinates as $\Psi_{i}$: $$\widehat{u}=z^{\beta}-z^{\beta}_{i}, \widehat{u}_{2}=w_{2}-w_{2,i},...,\widehat{u}_{n}=w_{n}-w_{n,i}.$$ 

With respect to the coordinate $\Psi_{i}$, We have \begin{equation}\label{equ omegabeta equals omegaE}\omega_{\beta}=\widehat{\omega}_{Euc},
   \end{equation}
   where $\widehat{\omega}_{Euc}$ is the Euclidean metric in the coordinates 
   $\widehat{u}, \widehat{u}_{2},..., \widehat{u}_{n}$. Then with respect to $\Psi_{i}$, by the translation invariance of $\widehat{\omega}_{Euc}$ along all directions (not only the tangential directions), the Monge-Ampere equation (\ref{equ rescaled Monge ampere equation}) is written as 
   \begin{equation}\label{equ translated rescaled Monge ampere equation}\widehat{\underline{\omega}}_i^n=e^{\widehat{\underline{F}}_{i}}\widehat{\omega}_{Euc}^n\ \textrm{in}\ B_{0}(\lambda_{i}),
\end{equation}
 where  $\widehat{\underline{F}}_{i}$ is the translated Ricci potential, and $$\lambda_{i}=\min\{\widehat{d}_{\beta,E}(p_{i},\partial \widehat{B}_{0}(\frac{R_{0}}{r_{p_{i},\omega_{i}}})),\ \frac{(\sin\beta\pi) \widehat{d}_{\beta,E}(p_{i},0)}{100}\} . $$
 Apparently, $\liminf_{i\rightarrow \infty}\lambda_{i}=+\infty$.
   Again by exactly the small ossilation argument in case 1, let $i\rightarrow \infty$, $\widehat{\underline{\omega}}_i$ tends to $\widehat{\underline{\omega}}_{\infty}$ strongly in the $C^{\alpha}$-sense, over compact subdomains of $C^{n}$. The limit $\widehat{\underline{\omega}}_{\infty}$  satisfies 
     \begin{equation}\label{equ limit translated rescaled Monge ampere equation}\widehat{\underline{\omega}}_{\infty}^n=C_{2}\widehat{\omega}_{Euc}^n\ \textrm{in}\ \mathbb{C}^n,\ \frac{\widehat{\omega}_{Euc}}{K}\leq \widehat{\underline{\omega}}_{\infty}\leq K\widehat{\omega}_{Euc}.
\end{equation}

    By Theorem \ref{thm Liouville} (in the case when $\beta=1$),  we still  have  $$\omega_{\infty}=L^{\star}\omega_{\beta}\ \textrm{over}\ C^{n}.$$ Using (\ref{equ omegabeta equals omegaE}) and Proposition \ref{Solvability of the ddbar equation}, we  perturb the following potential of $\widehat{\underline{\omega}}_{\infty}$
   \[\widehat{\underline{\phi}}_{\infty}=L^{\star}(|\widehat{u}|^{2}+\Sigma_{j=2}^{n}|\widehat{u}_{j}|^2)\ \textrm{in terms of the coordinate}\ \Psi_{i}. \]
   to a potential before  $i$ goes to $\infty$, in $B_{0}(2)$. Then, we obtain a contradiction as in Case 1 and Case 2 again,  to the hypothesis that $\frac{r_{p_{i},\omega_{i}}}{d_{\beta,E}(p_{i}, \partial B_{0}(R_{0}))}$ goes to 0!

   Thus, $\frac{r_{p_{i},\omega_{i}}}{d_{\beta,E}(p_{i}, \partial B_{0}(R_{0}))}$ can not go to $0$. This shows \begin{equation}\label{equ bound on phi c2alpha near D}\phi]_{2,\alpha,\beta,T_{\frac{R_{0}}{2}}(D)}\leq C, \end{equation}
   where $T_{\frac{R_{0}}{2}}(D)$ is the tubular neighborhood of $D$ with width $\frac{R_{0}}{2}$. By parabolic Evans-Krylov-Safanov Theorem (as in \cite{Wanglihe1}), we deduce the following estimate away from  $D$
  \begin{equation}\label{equ bound on phi c2alpha away from D}[\phi]_{2,\alpha,\beta,M\setminus T_{\frac{R_{0}}{4}}(D)}\leq C. 
  \end{equation}
   (\ref{equ bound on phi c2alpha near D}) and (\ref{equ bound on phi c2alpha away from D}) imply 
   \[[\phi]_{2,\alpha,\beta,M}\leq \mathbb{K}.\]
    The proof of Theorem \ref{Evans Krylov} is complete.
 
Step 3:\ 
To prove the long time existence part, notice that by the proof of Theorem 1.2 in \cite{CYW}, the short time $t_0$ such that the CKRF exists only depend on the background geometry $(M,(1-\beta)D,\omega_0)$ and $|\phi_0|_{2,\alpha,\beta}$, where $\phi_0$ is the potential of the initial metric   with respect to the reference metric $\omega_D$. Since $|\phi(t)|_{2,\alpha,\beta}\leq \mathbb{K}$ which is independent of $t\in [0,T)$, we can start the short-time solution for time period $t_0$ from 
$\phi(T-\frac{t_0}{2})$, thus end up with a flow for  $t\in [0,T+\frac{t_0}{2}]$. The $t_{0}$ is the short existence time in Theorem 1.2
of \cite{CYW}, subject to the bound $\mathbb{K}$ and the background geometry. Then the flow can be extended beyond any finite $T>0$.

The proof of the long time existence is completed.

Since $T\geq t_{0}$, where $t_{0}$ is the 
short existence time of the CKRF in Theorem 1.2 of \cite{CYW},  from the proof in Step 1, we conclude that $\mathbb{K}$ depends on the background geometry $(M,L,h,\omega_{0})$, the $C^{1,1}$-bound on $\phi$,  $|\frac{\partial \phi}{\partial t}|_{0}$, and the initial  metric of the flow. 

\end{proof}

\begin{rmk} We actually proved more: when the volume form (with respect to $\omega_{\beta}$) is $C^{\alpha^{\prime},\beta}$, we can obtain $C^{\alpha,\beta}$ estimate on the second derivatives ($\alpha<\alpha^{\prime}$), provided the $C^{1,1}$-estimate is already obtained.
This is interesting even in smooth case (when $\beta$=1), and we will discuss it in detail in a sequel of this paper.
\end{rmk}

\section{Poincare-Lelong equations.\label{section Poincare Lelong equations}}
In this section we work in the holomorphic coordinates.
 Our main target is to prove Proposition \ref{Solvability of the ddbar equation}. 
This  is crucial in the proof of Theorem \ref{Evans Krylov},  when we perturb the potential of the rescaled limit metric back to a potential before taking limit to get a contradiction (as in  \cite{Anderson} , where the Laplace equation is the main interest). Let $A_{R}$ be the cylinder (centered at $0$) with respect to the model cone metric $\omega_{\beta}$, as in \cite{CYW}. Let $\omega_{E}$ be the Euclidean metric in the holomorphic coordinates.
 
\begin{prop}\label{Solvability of the ddbar equation}There exists a constant $C$ depending on $\beta$ and $n$ with the following properties. Given the equation 
\begin{equation}\label{ddbar equation}\sqrt{-1}\partial \bar{\partial}v=\eta\ \textrm{over}\ A_{20},
\end{equation}
where $\eta\in C_{1,1}^{\alpha,\beta}$ is a closed (1,1)-form such that $\eta=\sqrt{-1}\partial\bar{\partial}\phi_{\eta}$ for some $\phi_{\eta}\in C^{2,\alpha,\beta}$. Then there exists a solution $v$ in 
$C^{2,\alpha,\beta}$ such that 
\begin{enumerate} \item $|v|_{2,\alpha,\beta,A_{5}}\leq C|\eta|_{\alpha,\beta,A_{20}}.$
\item $|v|_{0,A_{5}}\leq C|\eta|_{0,\beta,A_{20}}.$
\end{enumerate}

\end{prop}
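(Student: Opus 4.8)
The plan is to solve the equation in two stages: first reduce the $\sqrt{-1}\partial\bar\partial$-equation to a scalar Poisson-type equation, then solve the latter using the weak solution theory for the Laplacian of a conical metric that is quasi-isometric to the Euclidean metric in polar coordinates. Concretely, since $\eta=\sqrt{-1}\partial\bar\partial\phi_\eta$ is already exact, the naive choice $v=\phi_\eta$ solves \eqref{ddbar equation}, but it need not obey the desired estimates — in particular $\phi_\eta$ is only controlled through its second derivatives, and the $C^0$ bound in item (2) requires a genuinely better choice of primitive. So the real content is to produce a \emph{good} primitive. First I would set $h=\mathrm{tr}_{\omega_\beta}\eta$, which by hypothesis lies in $C^{\alpha,\beta}(A_{20})$ with $|h|_{\alpha,\beta}\le C|\eta|_{\alpha,\beta}$; note that $\omega_\beta^n$ being a constant-coefficient volume form in polar coordinates means $\Delta_{\omega_\beta}\phi_\eta = h$ in the weak sense away from $D$, and (after integrating by parts against test functions, using the quasi-isometry $\beta^2 g_E\le\omega_\beta\le\beta^{-2}g_E$) this becomes a uniformly elliptic divergence-form equation $\mathrm{div}(a\,\nabla w)=f$ in the Euclidean polar coordinates with bounded measurable coefficients.

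Next I would solve $\Delta_{\omega_\beta}w = h$ on, say, $A_{10}$ with zero boundary data (or solve it on a slightly larger region and cut off), using the existence theory for the Dirichlet problem for divergence-form equations with bounded measurable coefficients (Chapter 8 of \cite{GT}, Chapter III of \cite{LSU}); the solution $w$ satisfies a global $L^\infty$ bound $|w|_{0,A_{10}}\le C|h|_{0,\beta}$ by the weak maximum principle / De Giorgi–Nash $L^\infty$ estimate, giving item (2) up to identifying $w$ with $v$. For the Hölder regularity, De Giorgi–Nash gives $w\in C^{\alpha_0,\beta}$ for some small $\alpha_0>0$; to upgrade this to the full $C^{2,\alpha,\beta}$ bound of item (1) I would invoke the interior Schauder-type theory adapted to the conical setting — exactly the machinery already developed for $C^{2,\alpha,\beta}$ estimates in \cite{CYW} and used implicitly in Theorem \ref{Evans Krylov}: once $h\in C^{\alpha,\beta}$, the equation $|z|^{2-2\beta}w_{z\bar z}+\cdots=h$ together with the structure of the operator $\frac{\partial^2}{\partial\xi\partial\bar\xi}$ in the singular coordinate $\xi=z^\beta$ gives $[w]_{2,\alpha,\beta,A_5}\le C(|h|_{\alpha,\beta,A_{20}}+|w|_{0,A_{10}})\le C|\eta|_{\alpha,\beta,A_{20}}$. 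Finally, $\sqrt{-1}\partial\bar\partial w$ and $\eta$ are two closed $(1,1)$-forms with the same trace with respect to $\omega_\beta$; the difference $\psi=w-\phi_\eta$ is then $\omega_\beta$-harmonic and, being $C^{2,\alpha,\beta}$ with controlled norm (hence bounded across $D$), a Liouville/removable-singularity argument in the spirit of the rigidity results elsewhere in the paper forces $\sqrt{-1}\partial\bar\partial\psi=0$, so $\sqrt{-1}\partial\bar\partial w=\eta$ and we may take $v=w$.

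The step I expect to be the main obstacle is the passage from the scalar trace equation back to the \emph{full} $(1,1)$-form equation \eqref{ddbar equation}: solving $\Delta_{\omega_\beta}w=\mathrm{tr}_{\omega_\beta}\eta$ only guarantees that $\mathrm{tr}_{\omega_\beta}(\sqrt{-1}\partial\bar\partial w-\eta)=0$, and one must rule out a nonzero traceless closed harmonic $(1,1)$-form supported near $D$. Away from $D$ this is classical (a traceless $\omega_\beta$-harmonic real $(1,1)$-form with bounded potential on a ball is zero by standard elliptic theory), but one has to check that the conical singularity along $D$ does not produce extra solutions — this is where the $C^{2,\alpha,\beta}$ control on the potential and the quasi-isometry to $g_E$ are essential, so that the difference of potentials is globally bounded and its $\omega_\beta$-harmonicity propagates across $D$ by the weak maximum principle for conical metrics established in the later sections. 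A secondary technical point is arranging the boundary data / domain sizes ($A_{20}\supset A_{10}\supset A_5$) so that interior Schauder estimates can be applied without boundary regularity issues; this is routine given the inclusions of cylinders from \cite{CYW}.
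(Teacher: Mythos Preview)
Your approach has a genuine gap at exactly the point you flag as ``the main obstacle,'' and it is not a technicality: the claim that $\omega_\beta$-harmonicity of $\psi=w-\phi_\eta$ forces $\sqrt{-1}\partial\bar\partial\psi=0$ is simply false in complex dimension $n\ge 2$. Harmonicity means $\mathrm{tr}_{\omega_\beta}(\sqrt{-1}\partial\bar\partial\psi)=0$, but there are plenty of bounded harmonic functions on a bounded domain whose complex Hessian is nonzero --- for instance $\psi=|z_1|^{2\beta}-|u_2|^2$ is $\omega_\beta$-harmonic yet $\sqrt{-1}\partial\bar\partial\psi\neq 0$. No Liouville or removable-singularity argument can help here, since you are working on a bounded cylinder $A_{20}$, not on all of $\mathbb{C}^n$, and the rigidity results elsewhere in the paper concern Monge--Amp\`ere equations, not harmonic functions. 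Passing to the trace irreversibly discards the off-diagonal part of $\eta$, so the scalar Poisson equation alone cannot recover the full $(1,1)$-form equation.

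The paper avoids this issue by solving the full $\sqrt{-1}\partial\bar\partial$-equation directly rather than its trace. It uses an orbifold trick: choose $N$ with $\beta>\tfrac{1}{N-1}$, pull $\eta$ back through the branched cover $\mathfrak{T}(w)=w^N$ (this makes sense because $\eta=\sqrt{-1}\partial\bar\partial\phi_\eta$), and observe that $\mathfrak{T}^\star\eta$ is $C^{\acute\alpha}$ in the \emph{ordinary} sense upstairs. There one solves $\sqrt{-1}\partial\bar\partial\widehat v=\mathfrak{T}^\star\eta$ by H\"ormander's $L^2$ theory and the standard local $\partial\bar\partial$-lemma, with uniform $C^0$ and $C^1$ bounds. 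Averaging $\widehat v$ over the deck group makes it $\mathbb{Z}_N$-invariant, so it descends to a function $v$ downstairs that genuinely solves $\sqrt{-1}\partial\bar\partial v=\eta$. Only \emph{after} this does the paper pass to the trace equation $\Delta_\beta v=\mathrm{tr}_{\omega_\beta}\eta$ --- and then only to run Moser iteration for the $C^0$ bound and Donaldson's conical Schauder estimate for the $C^{2,\alpha,\beta}$ bound. Your Schauder/De Giorgi steps for the estimates are fine in spirit; what is missing is a mechanism that produces a solution of the full equation, and the orbifold pullback is precisely that mechanism.
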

\begin{rmk}By the assumptions, (\ref{ddbar equation})  is already solved
by $\phi_{\eta}$. The point is that we want a solution with the correct estimate.
\end{rmk}
\begin{proof}{ of Proposition \ref{Solvability of the ddbar equation}:} We only need  to find a  solution $v\in W^{1,2}_{\omega_{\beta}}(A_{6})\cap C^{0}(A_{6})$ such that  
\[|v|_{W^{1,2}_{\omega_{\beta}}(A_{6})}\leq C|\eta|_{0,\beta,A_{20}},\]
so consequently $v$ is  a weak solution to (\ref{ddbar equation}), by Lemma 2.5 in \cite{WYQ}. Then the Schauder regularity  estimate in \cite{Don} or in \cite{CYW} implies  $v$ is in $C^{2,\alpha,\beta}$ and $v$ satisfies interior Schauder estimate. 

    With the help of   Lemma \ref{Weak solution to the ddbar- equation},
 the    $W^{1,2}_{\omega_{\beta}}$-estimate of $v$ is actually straightforward.  It suffices to observe that 
\begin{equation}
\int_{A_{10}}\beta^2|z|^{2-2\beta}|\frac{\partial v}{\partial z}|^2 \omega^n_{\beta}=\int_{A_{10}} |\frac{\partial v}{\partial z}|^2 \omega^n_{Euc}\leq C.
\end{equation}
Obviously we have  $\Sigma_{i=1}^{n-1}|\frac{\partial v}{\partial u}|_{0,A_{10}}\leq C|\eta|_{0,\beta,A_{20}}$, then by Lemma 2.5 in \cite{WYQ}, $v$ is actually a weak solution to the following trace equation 
\begin{equation}\label{ddbar equation}\Delta_{\beta}v=\eta\ \textrm{over}\ A_{10}.
\end{equation}  
Thus the Moser's iteration trick  works again, as in the proof of Lemma  \ref{lem Trudinger's Harnack inequality}. Thus $v\in C^{0,\beta}(A_{10})\cap W^{1,2}_{\omega_\beta}$ and 
$$|v|_{0,A_{5}}\leq C|\eta|_{0,\beta,A_{10}}.$$
Item 2 is thus proved. 

 By 
the main Theorem in \cite{Don} and item 1 in Lemma \ref{Weak solution to the ddbar- equation} , we conclude   $v\in C^{2,\alpha,\beta}$ and 
\begin{equation}
|v|_{2,\alpha,\beta,A_{5}}\leq C(|\eta|_{\alpha,\beta, A_{10}}+|v|_{0,A_{10}})\leq C|\eta|_{\alpha,\beta, A_{20}}.
\end{equation}
The proof of item 1 is also complete.
\end{proof}
 Consider the natural orbifold map 
 \begin{eqnarray*}
 \mathfrak{T}:& & A_{\frac{1}{N}, 20^{\frac{1}{\beta N}},20}\rightarrow A_{\frac{1}{N},20^{\frac{1}{\beta N}},20}
    \\& &\mathfrak{T}(w)=w^{N}=z,
 \end{eqnarray*}
 where $A_{\frac{1}{N},R_{1},R_{2}}$ means the cylinder (centered at $0$) of normal radius $R_{1}$ and tangential radius $R_{2}$, with respect to the orbifold model metric $\omega_{\frac{1}{N}}$

\begin{lem}\label{C2alphabeta space gets more singular when beta get smaller}Suppose 
$\beta>\beta_{0},\ \alpha<1$, then 
\[C^{\alpha,\beta}_{1,1} \in C^{\widehat{\alpha},\beta_{0}}_{1,1},\]
where $\widehat{\alpha}=\min\{\alpha, \frac{\beta}{\beta_0}-1\}$ and
$C^{\alpha,\beta}_{1,1}$ is the space of $C^{\alpha,\beta}$ (1,1)-forms. 
Moreover, suppose $\beta>\frac{1}{N-1}>\frac{1}{N}$, and  $\eta=\sqrt{-1}\partial\bar{\partial}\phi_{\eta}$ for some $\phi_{\eta}\in C^{2,\alpha,\beta}$, then we can pull back $\eta$ by $\mathfrak{T}$  such that  $\mathfrak{T}^{\star}\eta \in C^{\acute{\alpha}}\ \textrm{in the usual sense upstairs},$
where $\acute{\alpha}<\min(\frac{1}{2(N-1)},\alpha)$.
\end{lem}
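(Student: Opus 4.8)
Proof proposal (plan).

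The plan is to treat the two assertions in turn; both are changes of the ``cone--uniformizing'' coordinate, and the content is only the bookkeeping of how the coefficients of a $(1,1)$--form and their H\"older norms transform. The single fact I will keep using is that near $D$ the model cone $\omega_{\gamma}$ equals (a constant multiple of) the Euclidean metric in the coordinate $(z^{\gamma},w_{2},\dots,w_{n})$, so that $C^{\alpha,\gamma}_{1,1}$ is exactly the space of $(1,1)$--forms whose coefficients in the frame $\{d(z^{\gamma})\wedge d\overline{z^{\gamma}},\,d(z^{\gamma})\wedge d\bar w_{j},\,dw_{j}\wedge d\bar w_{k}\}$ are ordinary $C^{\alpha}$ functions of $(z^{\gamma},w_{j})$ (sheetwise --- the monodromy being a rotation, under which $C^{\alpha}$ is invariant). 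For the first assertion $C^{\alpha,\beta}_{1,1}\subseteq C^{\widehat{\alpha},\beta_{0}}_{1,1}$ with $\beta>\beta_{0}$: write $\xi=z^{\beta}$, $\xi_{0}=z^{\beta_{0}}$, so $\xi=\xi_{0}^{\beta/\beta_{0}}$ with $\beta/\beta_{0}>1$, whence $\xi_{0}\mapsto\xi$ is Lipschitz on bounded sets. From $d\xi=\tfrac{\beta}{\beta_{0}}\,\xi_{0}^{\beta/\beta_{0}-1}d\xi_{0}$ one sees that passing from the $\xi_{0}$--frame to the $\xi$--frame multiplies the normal coefficient by $c\,|\xi_{0}|^{2(\beta/\beta_{0}-1)}$, each mixed coefficient by $c\,|\xi_{0}|^{\beta/\beta_{0}-1}$, and fixes the tangential coefficients. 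Two elementary facts finish it: (i) a radial power $|\xi_{0}|^{s}$, $s>0$, lies in $C^{\min(1,s)}$, and both $\min(1,2(\beta/\beta_{0}-1))$ and $\min(1,\beta/\beta_{0}-1)$ are $\ge\widehat{\alpha}=\min\{\alpha,\beta/\beta_{0}-1\}$; (ii) a $C^{\alpha}$ function of $\xi$, precomposed with the Lipschitz map $\xi_{0}\mapsto\xi$, is $C^{\alpha}\subseteq C^{\widehat{\alpha}}$ in $\xi_{0}$. Since a product of bounded $C^{\widehat{\alpha}}$ functions is $C^{\widehat{\alpha}}$, every coefficient of $\eta$ in the $\xi_{0}$--frame is $C^{\widehat{\alpha}}$ in $\xi_{0}$, i.e.\ $\eta\in C^{\widehat{\alpha},\beta_{0}}_{1,1}$; the estimates are quantitative, so the inclusion is bounded, and $\widehat{\alpha}\le\alpha<1$ is an admissible exponent.

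For the second (pullback) assertion: since $\mathfrak{T}^{\star}(z^{\beta})=(w^{N})^{\beta}=w^{N\beta}$ is precisely the cone--uniformizing coordinate of $\omega_{N\beta}$ upstairs, and $\mathfrak{T}^{\star}w_{j}=w_{j}$, the map $\mathfrak{T}$ is, read in the two uniformizing coordinates, simply the identity; hence $\mathfrak{T}^{\star}$ carries $C^{\alpha,\beta}_{1,1}$ (downstairs) into $C^{\alpha,N\beta}_{1,1}$ (upstairs). Because $\beta>\tfrac1N$ we have $N\beta>1$, so applying the first assertion upstairs with $(\beta,\beta_{0})\rightsquigarrow(N\beta,1)$ --- whose ``$\beta_{0}=1$'' uniformizing coordinate is $w$ itself, i.e.\ ordinary H\"older regularity --- gives $\mathfrak{T}^{\star}\eta\in C^{\widehat{\alpha},1}_{1,1}=C^{\widehat{\alpha}}_{1,1}$ upstairs with $\widehat{\alpha}=\min\{\alpha,N\beta-1\}$. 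Finally $\beta>\tfrac1{N-1}$ forces $N\beta-1>\tfrac1{N-1}$, so $\widehat{\alpha}\ge\min\{\alpha,\tfrac1{N-1}\}\ge\min\{\alpha,\tfrac1{2(N-1)}\}>\acute{\alpha}$, and $\mathfrak{T}^{\star}\eta\in C^{\acute{\alpha}}$ in the usual sense upstairs. The hypothesis $\eta=\sqrt{-1}\partial\bar{\partial}\phi_{\eta}$ is used only to make sense of $\mathfrak{T}^{\star}\eta=\sqrt{-1}\partial\bar{\partial}(\phi_{\eta}\circ\mathfrak{T})$, i.e.\ to know that $\mathfrak{T}^{\star}\eta$ is again a closed $(1,1)$--form with a potential and that the regularity obtained is genuinely that of $\sqrt{-1}\partial\bar{\partial}(\phi_{\eta}\circ\mathfrak{T})$.

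The main obstacle is not conceptual --- ``$|\xi_{0}|^{s}\in C^{\min(1,s)}$'' and ``a branched cover multiplies the cone parameter and is the identity in the uniformizing coordinates'' are the whole story. The one place demanding care is matching the $(1,1)$--form coefficients across the three frames ($\omega_{\beta}$, $\omega_{N\beta}$, Euclidean) exactly as in the definitions of \cite{CYW}: one must check that the phase factors distinguishing the unitary frames (the operators $\mathfrak{a}_{\beta}$, $\mathfrak{a}_{N\beta}$, $\partial/\partial w_{j}$) recombine precisely into the single--valued scalar weights $|z^{\gamma}|^{k(\cdot)}$ used above --- here the fact that the mixed coefficients of $\sqrt{-1}\partial\bar{\partial}\phi_{\eta}$ vanish on $D$ at rate $|\xi|^{\alpha}$ is what makes the recombination clean --- and that the multivaluedness of $z^{\beta}$ and $w^{N\beta}$ is harmless because on a connected chart it is a single constant rotation. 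I would check these against Remark~\ref{rmk omega beta has 0 ossilation in polar} and the definition of the polar $\sqrt{-1}\partial\bar{\partial}$-operator before writing out the details.
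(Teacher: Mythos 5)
Your Part 1 is essentially the paper's own argument, just packaged via the change of variable $\xi_{0}\mapsto\xi$ rather than factoring the radial weight out of the coefficient directly: the paper writes $|z|^{2-2\beta_{0}}\eta_{z\bar z}=\bigl[|z|^{2-2\beta}\eta_{z\bar z}\bigr]\cdot|z|^{2\beta-2\beta_{0}}$ and invokes $|z|^{2\beta-2\beta_{0}}\in C^{\min(2\beta/\beta_{0}-2,\,1),\beta_{0}}$ together with $C^{\alpha,\beta}\subset C^{\alpha,\beta_{0}}$, which is exactly your facts (i) and (ii) recombined. No substantive difference there.

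Part 2 is where you take a genuinely different route. The paper never introduces a cone parameter exceeding $1$: it first downgrades $\eta$ to $C^{\widehat\alpha,1/N}_{1,1}$ using Part 1 with $\beta_{0}=1/N$, and then, in the holomorphic frame, computes the pullback coefficient-by-coefficient, e.g.\ the key mixed term is rewritten as
\[
(\mathfrak{T}^{\star}\eta)_{w\bar u_{i}}
=N\,|w|^{N\beta-1}e^{-i\theta_{w}}\cdot\mathfrak{T}^{\star}\!\bigl(|z|^{1-\beta}\eta_{\,\cdot\,,\bar u_{i}}\bigr),
\]
so that the $w^{N-1}$ Jacobian, the $e^{-iN\theta_{w}}$ coming from $\theta_{z}=N\theta_{w}$, and the weight $|z|^{\beta-1}$ all get absorbed into the single factor $|w|^{N\beta-1}e^{-i\theta_{w}}=|w|^{N\beta-2}\bar w$, which is then directly estimated to lie in $C^{1/(2(N-1))}$. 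Your proposal instead factors through the space $C^{\alpha,N\beta}_{1,1}$ with $N\beta>1$ and then applies Part 1 with $(\beta,\beta_{0})\rightsquigarrow(N\beta,1)$. This is cleaner and, if carried out, gives a slightly sharper exponent $\min\{\alpha,N\beta-1\}$; the price is that you must extend the definition of $C^{\alpha,\gamma}$ to $\gamma>1$ (not done in the paper, though it works out) and justify that Part 1 applies there.

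Two points in your Part 2 need correcting before it is a proof. First, ``$\mathfrak{T}$ is, read in the two uniformizing coordinates, simply the identity'' is false with the paper's actual uniformizing coordinates $\xi=|z|^{\beta-1}z$ (downstairs) and $\xi'=|w|^{N\beta-1}w$ (upstairs): one computes $\xi\circ\mathfrak{T}=|w|^{N\beta-N}w^{N}$, which has the same modulus as $\xi'$ but $N$ times the argument, i.e.\ it is the $N$-fold winding map $\xi'\mapsto|\xi'|e^{iN\arg\xi'}$, not the identity. Your conclusion survives because that winding map is Lipschitz, but the justification as written is wrong; this is precisely the phase bookkeeping you flagged as ``the one place demanding care'' and then left unchecked, and it is exactly what the paper's equation (\ref{Mixed derivative Holder in PL equation}) does. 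Second, you should make explicit that for the mixed coefficients the factors $|w|^{1-N\beta}(\mathfrak T^{\star}\eta)_{\rho_{w}\bar u_{i}}$ must be computed in the \emph{polar} frame (as in Remark \ref{rmk omega beta has 0 ossilation in polar} and the polar $\sqrt{-1}\partial\bar\partial$-operator), because in the holomorphic frame a stray factor $e^{i(N-1)\theta_{w}}$ appears that is not even continuous at $w=0$; in the polar frame it is absorbed, and only the well-behaved $|w|^{N\beta-2}\bar w$ survives.
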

\begin{proof}{of Lemma \ref{C2alphabeta space gets more singular when beta get smaller}: }

This Lemma is easy to prove as follows. With respect to $(1,1)$-derivative, $\eta \in C^{\alpha,\beta}_{1,1}$  means 
\[|z|^{2-2\beta}\eta_{(z\bar{z})}\in C^{\alpha,\beta}.\]
 Thus we have 
 \[|z|^{2-2\beta_0}\eta_{(z\bar{z})}=[|z|^{2-2\beta}\eta_{(z\bar{z})}] (|z|^{2\beta-2\beta_0}). \]
 Since $[|z|^{2-2\beta}\eta_{(z\bar{z}}]\in C^{\alpha,\beta}\in C^{\alpha,\beta_0}$
 , and $(|z|^{2\beta-2\beta_0}\in C^{\min(\frac{2\beta}{\beta_0}-2,1),\beta_0}$, then 
 \[|z|^{2-2\beta_0}\eta_{(z\bar{z})}\in C^{\min(\frac{2\beta}{\beta_0}-2,\alpha),\beta_0}.\]
 Notice for the mixed derivatives we have  for any $1\leq i\leq n-1$ that 
 \begin{equation}
 |z|^{1-\beta_0}\eta_{\rho\bar{u}_i}=|z|^{\beta-\beta_0} |z|^{1-\beta}\eta_{\rho,u_i}.
 \end{equation}
 
 Using the assumption \[ |z|^{1-\beta}\eta_{\rho,u_i}\in C^{\alpha,\beta} \in C^{\alpha,\beta_0}.\]
  and the fact $|z|^{\beta-\beta_0}\in C^{\min (\frac{\beta}{\beta_0}-1,\alpha);\beta_0}$
 we get 
  \[|z|^{1-\beta_0}\eta_{z\bar{u}_i}\in C^{\min (\frac{\beta}{\beta_0}-1,1),\beta_0}.\]
  
  Usually we can not pull back a current $\eta$ . However, in case 
  when $\eta=\sqrt{-1}\partial\bar{\partial}\phi_{\eta}$ for some $\phi_{\eta}\in C^{2,\alpha,\beta}$, we can pull back $\eta$ by defining \[\mathfrak{T}^{\star}\eta= \sqrt{-1}\partial\bar{\partial}\mathfrak{T}^{\star}\phi_{\eta}.\] 
  Then, for the last part in Lemma \ref{C2alphabeta space gets more singular when beta get smaller}, without of generality we only consider 
  the mixed term $(\mathfrak{T}^{\star}{\eta})_{w\bar{u}_i}$, the other terms are similar. Notice that 
  \begin{eqnarray}\label{Mixed derivative Holder in PL equation}
 \nonumber (\mathfrak{T}^{\star}{\eta})_{w\bar{u}_i}& =&Nw^{N-1}\mathfrak{T}^{\star}(|z|^{\beta-1}|z|^{1-\beta} \eta_{z\bar{u}_i})
 \nonumber \\&=&Nw^{N-1}|w|^{N\beta-N}e^{-iN\theta_{w}}\mathfrak{T}^{\star}(|z|^{1-\beta} \eta_{\frac{\partial}{\partial \rho}+\frac{1}{i\rho}\frac{\partial}{\partial \theta_{z}},\bar{u}_i})
  \nonumber\\&=&N\frac{w^{N-1}}{|w|^{N-1}}|w|^{N\beta-1}e^{-iN\theta_{w}}\mathfrak{T}^{\star}(|z|^{1-\beta} \eta_{\frac{\partial}{\partial \rho}+\frac{1}{i\rho}\frac{\partial}{\partial \theta_{z}},\bar{u}_i})
 \nonumber \\&=&N|w|^{N\beta-1}e^{-i\theta_{w}}\mathfrak{T}^{\star}(|z|^{1-\beta} \eta_{\frac{\partial}{\partial \rho}+\frac{1}{i\rho}\frac{\partial}{\partial \theta_{z}},\bar{u}_i}).
  \end{eqnarray}
  Since  \[|z|^{1-\beta} \eta_{\frac{\partial}{\partial \rho}+\frac{1}{i\rho}\frac{\partial}{\partial \theta_{z}},\bar{u}_i}\in C^{\alpha,\beta}\in C^{\alpha,\frac{1}{N}},\]
   then $\mathfrak{T}^{\star}(|z|^{1-\beta} \eta_{\frac{\partial}{\partial \rho}+\frac{1}{i\rho}\frac{\partial}{\partial \theta_{z}},\bar{u}_i})\in C^{\alpha}$ in the regular sense upstairs. On the other hand, by (\ref{lower bound for the angle beta}) we have $|w|^{N\beta-1}e^{-i\theta_{w}}\in C^{\frac{1}{2(N-1)}}$. 
   
   The proof of the Lemma is completed. \end{proof}
\begin{lem}\label{Weak solution to the ddbar- equation}Under the same hypothesis of Proposition \ref{Solvability of the ddbar equation}.  There exists  a weak solution $v\in W^{1,2}_{\omega_{E}}$  to equation (\ref{ddbar equation}) such that 
\begin{itemize}
\item $|v|_{0,A_{10}}\leq C|\eta|_{\alpha,\beta, A_{20}}$,
\item $|\frac{\partial v}{\partial z}|_{L^2(A_{10}),\omega_{E}}+\Sigma_{i=1}^{n-1}|\frac{\partial v}{\partial u}|_{0,A_{10}}\leq C|\eta|_{\alpha,\beta, A_{20}}$.
\end{itemize}
\begin{proof}{of Lemma  \ref{Weak solution to the ddbar- equation}:}We use the orbifold method, which should be counted as a geometric argument. Fix a $1>\beta>0$, there exists an integer $N$ such that 
\begin{equation}\label{lower bound for the angle beta}
 \beta>\frac{1}{N-1}>\frac{1}{N}.
\end{equation}

 The geometry of $A_{20}$ is like an orbifold. Moreover,  it's obvious that 
 
 \[A_{20}=A_{\frac{1}{N},20^{\frac{1}{\beta N}},20}, \]
 where $A_{\frac{1}{N},20^{\frac{1}{\beta N}},20}$ is the cylinder (centered at $0$) of normal radius $20^{\frac{1}{\beta N}}$ and tangential radius $20$, with respect to the orbifold model metric $\omega_{\frac{1}{N}}$.

 Therefore  we could treat $\eta$ as a form in $C^{\widehat{\alpha},\frac{1}{N}}_{1,1}$, $\widehat{\alpha}<\min(\frac{1}{2(N-1)},\alpha)$.
 
 The we consider the $i\partial\bar{\partial}$-equation over the upstair space
 \begin{equation}\label{unnormalized ddbar equation upstairs }
 i\partial\bar{\partial}\widehat{v}= \mathfrak{T}^{(\star)}\eta.
 \end{equation}
By Claim \ref{C2alphabeta space gets more singular when beta get smaller} we obtain  \[|\mathfrak{T}^{(\star)}\eta|_{\acute{\alpha},A_{\frac{1}{N}, 20^{\frac{1}{\beta N}},20}}\leq C|\eta|_{\alpha, \beta,A_{20}}.\] Using Hormander's results in \cite{Hormander} and the standard proof  of the $i\partial\bar{\partial}$-lemma as in \cite{GH}, we can find a solution $\widehat{v}$ to equation (\ref{unnormalized ddbar equation upstairs })  with the following properties.
 \begin{eqnarray}\label{PL: estimate upstairs}
 & & |\widehat{v}|_{0,A_{\frac{1}{N}, 10^{\frac{1}{\beta N}},10}}+|\frac{\partial \widehat{v}}{\partial w}|_{0,A_{\frac{1}{N}, 10^{\frac{1}{\beta N}},10}}+\Sigma_{i=1}^{n-1}|\frac{\partial \widehat{v}}{\partial u}|_{0,A_{\frac{1}{N}, 10^{\frac{1}{\beta N}},10}}
\nonumber \\&\leq &  C|\mathfrak{T}^{(\star)}\eta|_{0,A_{\frac{1}{N}, 20^{\frac{1}{\beta N}},20}}.
\end{eqnarray}  
Denote $a_{N}=e^{\frac{2\pi i}{N}}$ as the $n-th$ unit root,  we define the renormalized solution as 
\begin{equation} \label{invariant solution upstairs}
\underline{v}(w,\ \cdot)=\frac{1}{N}[\widehat{v}(w,\ \cdot)+\widehat{v}(a_{N}w,\ \cdot)+...+\widehat{v}(a^{N-1}_{N}w,\ \cdot)].
\end{equation}
Then $\underline{v}$ is invariant under the deck transformation over $A_{\frac{1}{N}, 20^{\frac{1}{\beta N}},20}$ (by multiplying $a_{N}$). Moreover, $\underline{v}$ still solves (\ref{unnormalized ddbar equation upstairs }). By (\ref{PL: estimate upstairs}),  $\underline{v}$ satisfies 
 \begin{eqnarray}\label{PL:Estimate upstairs on the symmetrized solution}
 & & |\underline{v}|_{0,A_{\frac{1}{N}, 10^{\frac{1}{\beta N}},10}}+|\frac{\partial \underline{v}}{\partial w}|_{0,A_{\frac{1}{N}, 10^{\frac{1}{\beta N}},10}}+\Sigma_{i=1}^{n-1}|\frac{\partial \underline{v}}{\partial u}|_{0,A_{\frac{1}{N}, 10^{\frac{1}{\beta N}},10}}\nonumber
 \\&\leq& C|\mathfrak{T}^{(\star)}\eta|_{0,A_{\frac{1}{N}, 20^{\frac{1}{\beta N}},20}}.
\end{eqnarray}  
Now we show that $\underline{v}(w)$ can be decended to $v(z)$ over $A_{10}$. Suppose $z=\rho e^{i\theta}, \theta\in [0,2\pi)$.  Define
 \begin{equation*}
 v(z)\triangleq \underline{v}(z^{\frac{1}{N}}),
\end{equation*}  
where $z^{\frac{1}{N}}$ represents the single-valued branch as $re^{i\theta}\rightarrow r^{\frac{1}{N}}e^{\frac{i\theta}{N}}$. 

It's easy to check $ v(z)$ has the same limit when $\theta$ approaches $0$ and $2\pi$, therefore $v(z)$ is well defined. This is obvious from the construction in  (\ref{invariant solution upstairs}). It's also easy to check $\frac{\partial v}{\partial z}(\frac{\partial v}{\partial \bar{z}})$, $\frac{\partial^2 v}{\partial z\partial \bar{z}}$, $\frac{\partial^2 v}{\partial z^2}(\frac{\partial^2 v}{\partial \bar{z}^2})$  all match up when $\theta=0$ and $2\pi$, therefore they  are well defined and at least continous away from $\{z=0\}$.\\

 By construction we directly have  $v\in C^{2,0,\frac{1}{N}}$. Moreover,   the $C^{0}$ estimate upstairs trivially decends downstairs, namely we have
    \begin{equation}\label{PL:C^0 estimate downstairs}
    |v|_{0,A_{10} }\leq C|\eta|_{0,\frac{1}{N},A_{\frac{1}{N}, 20^{\frac{1}{\beta N}},20}}\leq C|\eta|_{\alpha,\beta,A_{20}}.
     \end{equation}
    $v(z)$ is $C^{2,\alpha}$ in the usual sense away from $D$.
    
    Next we consider $W^{1,2}_{\omega_{E}}$ estimates in the holomorphic coordinates
    (with respect to the Euclidean metric). 
The estimate upstairs $$|\frac{\partial \underline{v}}{\partial \omega}|_{0,A_{\frac{1}{N}, 10^{\frac{1}{\beta N}},10}}\leq C|\mathfrak{\eta}|_{0,\frac{1}{N},A_{\frac{1}{N}, 20^{\frac{1}{\beta N}},20}}$$
 and the second inquality in (\ref{PL:C^0 estimate downstairs}) implies 
 \begin{equation}
 N^2|z|^{2-\frac{2}{N}}|\frac{\partial v}{\partial z}|^2\leq C|\eta|^2_{0,\beta,A_{20}}\ \textrm{ for all }\ z\in A_{10}.
 \end{equation}
 Then we have 
\begin{equation*}
\int_{A_{10}} |\frac{\partial v}{\partial z}|^2 \omega^n_{E}=\int_{A_{10}} N^2|z|^{2-\frac{2}{N}}|\frac{\partial v}{\partial z}|^2 \omega^n_{\frac{1}{N}}\leq C|\eta|^2_{\alpha,\beta,A_{20}}.
\end{equation*}
The tangential derivatives are obviously bounded in $C^{0}$-norm. Thus the proof of 
Lemma \ref{Weak solution to the ddbar- equation} is complete.
\end{proof}
\end{lem}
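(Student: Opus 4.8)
The plan is to reduce the cone equation $\sqrt{-1}\partial\bar{\partial}v=\eta$ to a classical, Euclidean $\sqrt{-1}\partial\bar{\partial}$-equation with genuinely Hölder data by lifting to an orbifold cover, solving there with H\"ormander's $L^{2}$ method, averaging over the deck group, and then descending. First I would pick an integer $N$ with $\beta>\frac{1}{N-1}>\frac{1}{N}$ and, using the first (elementary) part of Lemma \ref{C2alphabeta space gets more singular when beta get smaller}, regard $\eta$ as an element of $C^{\widehat{\alpha},\frac{1}{N}}_{1,1}$ on $A_{20}=A_{\frac{1}{N},\,20^{\frac{1}{\beta N}},\,20}$ with $|\eta|_{\widehat{\alpha},\frac{1}{N}}\le C|\eta|_{\alpha,\beta}$. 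The point of the angle $\frac{1}{N}$ is that it is a true orbifold angle: under the $N$-fold branched cover $\mathfrak{T}(w)=w^{N}=z$ the model metric $\omega_{\frac{1}{N}}$ pulls back to a constant multiple of the flat Euclidean metric in $w$. Here the hypothesis $\eta=\sqrt{-1}\partial\bar{\partial}\phi_{\eta}$ with $\phi_{\eta}\in C^{2,\alpha,\beta}$ is essential, since a priori $\eta$ is only a current and one cannot pull it back; one instead \emph{defines} $\mathfrak{T}^{\star}\eta:=\sqrt{-1}\partial\bar{\partial}(\mathfrak{T}^{\star}\phi_{\eta})$, and the last part of Lemma \ref{C2alphabeta space gets more singular when beta get smaller} then says $\mathfrak{T}^{\star}\eta$ is an honest $C^{\acute{\alpha}}$ closed $(1,1)$-form upstairs (ordinary Euclidean H\"older) with $|\mathfrak{T}^{\star}\eta|_{\acute{\alpha}}\le C|\eta|_{\alpha,\beta}$.

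Next I would solve $\sqrt{-1}\partial\bar{\partial}\widehat{v}=\mathfrak{T}^{\star}\eta$ upstairs on a slightly smaller cylinder. Since the right-hand side is a closed $C^{\acute{\alpha}}$ form on a Euclidean domain, the standard local $\sqrt{-1}\partial\bar{\partial}$-lemma (solve a $\bar{\partial}$-problem for a primitive, then a second $\bar{\partial}$-problem, via H\"ormander's $L^{2}$ estimates \cite{Hormander}, as in \cite{GH}) together with interior Schauder estimates produces $\widehat{v}$ with $|\widehat{v}|_{0}+|\frac{\partial\widehat{v}}{\partial w}|_{0}+\sum_{i}|\frac{\partial\widehat{v}}{\partial u_{i}}|_{0}\le C|\mathfrak{T}^{\star}\eta|_{0}$. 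Because $\mathfrak{T}^{\star}\eta$ is invariant under the deck transformation $w\mapsto a_{N}w$, $a_{N}=e^{2\pi i/N}$, the symmetrization $\underline{v}(w,\cdot):=\frac{1}{N}\sum_{k=0}^{N-1}\widehat{v}(a_{N}^{k}w,\cdot)$ still solves the same equation, is deck-invariant, and satisfies the same bounds. I would then descend by setting $v(z):=\underline{v}(z^{1/N})$ with the single-valued branch $\rho e^{i\theta}\mapsto\rho^{1/N}e^{i\theta/N}$, $\theta\in[0,2\pi)$: deck-invariance forces $v$, together with $\frac{\partial v}{\partial z}$, $\frac{\partial^{2}v}{\partial z\partial\bar{z}}$, $\frac{\partial^{2}v}{\partial z^{2}}$, to match up across $\theta=0\equiv 2\pi$, so $v$ is well defined, lies in $C^{2,0,\frac{1}{N}}$, is smooth off $\{z=0\}$, and satisfies $\sqrt{-1}\partial\bar{\partial}v=\eta$ there.

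Finally I would transfer the estimates. The $C^{0}$ bound descends verbatim, giving $|v|_{0,A_{10}}\le C|\eta|_{\alpha,\beta}$. For the $W^{1,2}_{\omega_{E}}$ bound, the chain rule gives $\frac{\partial v}{\partial z}=\frac{1}{N}z^{\frac{1}{N}-1}\big(\frac{\partial\underline{v}}{\partial w}\big)(z^{1/N})$, so $|z|^{1-\frac{1}{N}}\big|\frac{\partial v}{\partial z}\big|\le C|\eta|_{\alpha,\beta}$; since $\omega_{E}^{n}=N^{2}|z|^{2-\frac{2}{N}}\omega_{\frac{1}{N}}^{n}$ up to a constant,
\[
\int_{A_{10}}\Big|\frac{\partial v}{\partial z}\Big|^{2}\,\omega_{E}^{n}=\int_{A_{10}}N^{2}|z|^{2-\frac{2}{N}}\Big|\frac{\partial v}{\partial z}\Big|^{2}\,\omega_{\frac{1}{N}}^{n}\le C|\eta|^{2}_{\alpha,\beta},
\]
while the tangential derivatives $\frac{\partial v}{\partial u_{i}}$ are bounded in $C^{0}$ directly. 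Together with $v\in L^{\infty}$ and smoothness away from $D$ — so, $D$ having real codimension two and $v\in W^{1,2}_{\omega_{E}}$, one may integrate by parts with no contribution from $\{z=0\}$ (Lemma 2.5 in \cite{WYQ}) — this shows $v$ is the desired weak $W^{1,2}_{\omega_{E}}$ solution. The hard part, I expect, is not the estimates (routine once the geometry is set up) but the conceptual setup: recognizing that $C^{\alpha,\beta}$ data may be treated as living on the coarser orbifold $C^{\widehat{\alpha},\frac{1}{N}}$ so that the $\frac{1}{N}$-cover is smooth, making sense of the pullback of the current $\eta$ through its $\sqrt{-1}\partial\bar{\partial}$-primitive, and checking that the descended $v$ is a genuine global weak solution rather than merely one on the complement of $D$.
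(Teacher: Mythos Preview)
Your proposal is correct and follows essentially the same approach as the paper: pass to the $N$-fold orbifold cover with $\beta>\frac{1}{N-1}$, pull back $\eta$ via its potential using Lemma \ref{C2alphabeta space gets more singular when beta get smaller}, solve the Euclidean $\sqrt{-1}\partial\bar{\partial}$-equation upstairs by H\"ormander/$\sqrt{-1}\partial\bar{\partial}$-lemma with $C^{1}$ bounds, symmetrize over the deck group, descend, and convert the upstairs $C^{0}$ gradient bound into the downstairs $W^{1,2}_{\omega_{E}}$ estimate via the chain rule and the volume-form identity. The added remark about codimension-two removability (Lemma 2.5 of \cite{WYQ}) is consistent with how the paper uses the lemma in the surrounding Proposition \ref{Solvability of the ddbar equation}.
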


\section{A rigidity theorem.\label{section A rigidity theorem}}
In this section we prove Theorem \ref{thm Tangent cone trivial implies omega trivial}. This theorem   implies Theorem \ref{thm Liouville}, if we can show $\omega$ has a tangent cone  which is isomorphic to $\omega_{\beta}$.

\begin{thm}\label{thm Tangent cone trivial implies omega trivial}Suppose $\omega$ is a $ C^{\alpha,\beta}$ conical K\"ahler metric defined  over $C^{n}$. Suppose 
\begin{equation}
\omega^{n}=\omega_{\beta}^{n},\ \frac{\omega_{\beta}}{K}\leq \omega\leq K\omega_{\beta}\ \textrm{over}\ C^{n}.
\end{equation}
Suppose for some linear transformation $L$, $L^{\star}\omega_{\beta}$ is one of the tangent cones of $\omega$. Then $\omega=L^{\star}\omega_{\beta}$. 
\end{thm}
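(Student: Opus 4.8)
The plan is to exploit the fact that $L^{\star}\omega_{\beta}$ appears as a tangent cone, which means that some sequence of rescalings $r_j^{-2}T^{\star}_{r_j}\omega$ converges (say in $C^{\alpha,\beta}_{loc}$, using the uniform two-sided bound $\frac{1}{K}\omega_\beta \le \omega \le K\omega_\beta$ and the Monge--Amp\`ere equation to get compactness of the rescalings) to $L^{\star}\omega_\beta$ over $\mathbb{C}^n$. After replacing $\omega$ by $(L^{-1})^{\star}\omega$ — which preserves $\{z=0\}$, preserves the normalized volume form $\omega_\beta^n$ up to a constant that can be absorbed, and preserves the cone-angle structure since $L$ is linear and fixes the divisor — we may assume the tangent cone in question is $\omega_\beta$ itself, and the goal becomes to show $\omega \equiv \omega_\beta$. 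First I would write $\omega = \sqrt{-1}\partial\bar\partial\phi$ locally (via the $C^{\alpha,\beta}$ potential, or globally using a Poincar\'e--Lelong-type argument as in Section \ref{section Poincare Lelong equations}), so that $\phi$ solves the homogeneous complex Monge--Amp\`ere equation $(\omega_\beta + \sqrt{-1}\partial\bar\partial\phi)^n = \omega_\beta^n$ with $\frac{1}{K}\omega_\beta \le \omega_\beta + \sqrt{-1}\partial\bar\partial\phi \le K\omega_\beta$.

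The heart of the argument is a monotonicity/rigidity scheme in the spirit of the smooth case (Riebesehl--Schulz, Calabi, Pogorelov). I would introduce the rescaled potentials $\phi_r(x) = r^{-2}\phi(T_r x)$ and study a scale-invariant quantity that measures how far $\omega$ is from $\omega_\beta$ at scale $r$ — for instance $\Theta(r) = r^{-2n}\int_{B_r} |\omega - \omega_\beta|^2\,\omega_\beta^n$ or the oscillation of the Hessian of $\phi$ on $B_r$ — and show it is (almost) monotone in $r$. The hypothesis that $\omega_\beta$ is a tangent cone forces $\liminf_{r\to 0}\Theta(r)=0$; by the nonlinearity of the Monge--Amp\`ere operator, once the rescaled solution is sufficiently close to the linear model $\omega_\beta$ on a fixed ball, the equation is uniformly close to the Laplace equation $\Delta_\beta \phi = 0$ (as in the small-oscillation estimate (\ref{equ laplacian controlled by small constant times ddbar}) and the discussion following (37) in \cite{CDS2}), so the improvement-of-flatness / $\varepsilon$-regularity machinery, combined with the De Giorgi--Nash--Moser and weak Harnack estimates for $\Delta_\beta$ recorded in Sections \ref{section A rigidity theorem}--\ref{section Strong Maximum Principles and De Giorgi estimate} and Appendix B, yields that $\Theta(r)\to 0$ as $r\to 0$ \emph{monotonically}, and in fact that $\Theta$ decays at a definite H\"older rate. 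Running the same scheme at \emph{large} scales (using the global bounds over all of $\mathbb{C}^n$, and the $\beta=1$ resolution for the tangential directions), I would show $\Theta(r)$ is also controlled as $r\to\infty$; pinching between the two forces $\Theta \equiv 0$, i.e. $\omega = \omega_\beta$.

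An alternative, perhaps cleaner route: apply the strong maximum principle to the trace function $\mathrm{tr}_{\omega_\beta}\omega$, which is bounded (by the $C^{1,1}$-bound) and $\omega_\beta$-subharmonic away from $D$ by a Chern--Lu-type computation like Lemma \ref{Chen-Lu Inequality} (specialized to the static, Ricci-flat case), and which is $L^\infty$ globally. Since the tangent-cone hypothesis pins down the asymptotic value of this trace along a sequence of scales, the extended maximum principle of Sections \ref{section A rigidity theorem}--\ref{section Strong Maximum Principles and De Giorgi estimate} — valid for functions merely locally smooth off $D$ and globally $L^\infty$ — would force $\mathrm{tr}_{\omega_\beta}\omega$, hence also $\mathrm{tr}_{\omega}\omega_\beta$ by the same argument applied to the inverse, to be constant; together with $\omega^n = \omega_\beta^n$ this gives $\omega = \omega_\beta$ pointwise. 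The main obstacle in either approach is the conical singularity along $D$: the $\varepsilon$-regularity and strong maximum principle must be run for quantities that are only $L^\infty$ (not $C^{\alpha,\beta}$) near $D$, so Jeffres' barrier trick is unavailable, and one must genuinely use the weak-solution theory for $\Delta_\beta$ (quasi-isometry to the Euclidean metric in polar coordinates, integration by parts transferring $W^{1,2}$-inequalities to the flat setting) to justify that no mass concentrates on $D$ and that the rescaling limits are taken in a strong enough topology. Getting the monotone improvement of $\Theta(r)$ uniformly up to $D$ is the step I expect to require the most care.
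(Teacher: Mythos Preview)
Your proposal misses the paper's much shorter and more direct argument, and each of your two routes has a genuine gap.

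The paper's proof (after reducing to $L=\mathrm{id}$) is essentially three lines. The tangent-cone hypothesis here is a blow-\emph{down}: there is a sequence $R_i\to\infty$ with $\omega_i:=R_i^{-2}T_{R_i}^{\star}\omega\to\omega_{\beta}$ on every fixed ball. Upgrading this convergence to $L^\infty$ via Moser iteration, one gets $\frac{1}{1+\delta}\omega_\beta\le\omega_i\le(1+\delta)\omega_\beta$ on $B(1/2)$ for $i$ large. Proposition~\ref{prop Gap} (the small-oscillation Schauder estimate) then gives a \emph{uniform} bound $[\omega_i]_{\alpha,\beta,B(1/8)}\le C$. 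Scaling back, $[\omega]_{\alpha,\beta,B(R_i/8)}\le C R_i^{-\alpha}$. Since $R_i\to\infty$ the balls exhaust $\mathbb{C}^n$ and the right side goes to $0$, so $[\omega]_{\alpha,\beta,\mathbb{C}^n}=0$, i.e.\ $\omega$ is constant in the polar basis, hence equals $\omega_\beta$. No monotonicity, no pinching between two scale regimes, and no maximum principle are used; the entire content is that the H\"older \emph{seminorm} scales like $R^{-\alpha}$ while Proposition~\ref{prop Gap} gives a uniform unit-scale bound.

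Your Approach~1 has the right $\varepsilon$-regularity ingredient (you even cite Proposition~\ref{prop Gap}), but you place the tangent cone at $r\to 0$ rather than $r\to\infty$. Over $\mathbb{C}^n$ a $C^{\alpha,\beta}$ conical metric automatically has $\omega_\beta$ as its blow-up at the origin, so that hypothesis carries no information; the force of the assumption is at infinity. Your ``pinching between small and large scales'' is then not doing what you want, and the asserted (almost) monotonicity of $\Theta(r)$ is never established.

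Your Approach~2 is essentially the argument of Section~\ref{section Proof of Theorem liouville} for the \emph{full} Liouville theorem (Theorem~\ref{thm Liouville}), which \emph{invokes} Theorem~\ref{thm Tangent cone trivial implies omega trivial} as its final step. Using it here would be circular. Moreover, you gloss over the real difficulty in that route: the supremum of $\mathrm{tr}_{\omega_\beta}\omega$ need not be attained, so the strong maximum principle does not apply directly; the paper handles this by a harmonic-lifting trick \emph{before} passing to the limit, and then feeds the resulting tangent cone into the present theorem.
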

\begin{proof}{of Theorem \ref{thm Tangent cone trivial implies omega trivial}:} Without loss of generality we assume $L=id$. 
   Consider   scalling  $B(R)$ to $B(1)$ as 
\begin{equation}\label{equ scale down formula}
\widehat{z}\rightarrow R_{i}^{-\frac{1}{\beta}}\widehat{z}=z;\
\widehat{u}_i\rightarrow R_{i}^{-1}\widehat{u}_{i}=u_i;\
\widehat{\phi}\rightarrow R_{i}^{-2}\widehat{\phi}=\phi.
\end{equation}
Suppose the tangent cone along the sequence $R_{i}$ is $\omega_{\beta}$ , which means $\omega_{i}=R^{-2}_{i}\omega\rightarrow \omega_{\beta}$ over $B(\lambda)$ for all $\lambda>0$. Take $\lambda=1$, we have from the proof of Proposition 2.5 in \cite{CDS1} that 
      \[\lim_{i}|\omega_{i}-\omega_{\beta}|_{L^{2}(B_{0}(1))}=0.\]
  
  By the Moser's iteration trick in the proof of Proposition 26 in \cite{CDS2}, since $\omega_{i}$ is also  Ricci flat and quasi-isometric 
  to $\omega_{\beta}$ in the scaled down coordinates, we have 
   \[\lim_{i}|\omega_{i}-\omega_{\beta}|_{L^{\infty}, (B_{0}(\frac{1}{2}))}=0.\]
   
   Thus, when $i$ is large enough, $\omega_{i}$ satisfies the assumptions in Proposition \ref{prop Gap} over $B(\frac{1}{2})$. Then we obtain when $i$ is large that 
        \[[\omega_{i}]_{\alpha,\beta,B(\frac{1}{8})}\leq C.\]
       Rescale back, we get 
       \[[\omega]_{\alpha,\beta,B(\frac{R_{i}}{8})}\leq CR_{i}^{-\alpha}.\]
       
       Let $i\rightarrow \infty$, we get 
        \[[\omega]_{\alpha,\beta,\ C^{n}}=0.\]
        Then $\omega=\omega_{\beta}$ over $C^{n}$. The proof is complete.



\end{proof}

\begin{prop}\label{prop Gap}Suppose $\omega$ is a $C^{\alpha,\beta}$ conical K\"ahler metric  defined  over $B_{0}(1)$. Suppose there is a small enough $\delta$ such that  
\begin{equation}
\omega^{n}=\omega_{\beta}^{n},\  \frac{\omega_{\beta}}{1+\delta}\leq \omega\leq (1+\delta)\omega_{\beta}\ \textrm{over}\ B_{0}(1).
\end{equation}
Then the following estimate holds in $B(\frac{1}{4})$. 
\[[\omega]_{\alpha,\beta,B(\frac{1}{4})}\leq C.\]
\end{prop}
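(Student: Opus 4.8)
The plan is to prove an interior Schauder-type gap estimate by exploiting that, when $\omega$ is $\delta$-close to $\omega_\beta$, the complex Monge--Ampère equation $\det(\partial\bar\partial\phi)=\det(\partial\bar\partial\phi_\beta)$ is a small perturbation of the constant-coefficient Laplace equation. First I would write $\omega=\omega_\beta+\sqrt{-1}\partial\bar\partial u$ on $B_0(1)$, where $u=\phi-\phi_\beta$ and $\phi_\beta=|z|^{2\beta}+\sum_{j\ge 2}|w_j|^2$; the hypotheses give $|\sqrt{-1}\partial\bar\partial u|\le C\delta$ pointwise in the conical sense. Next I would record the algebraic identity, valid for any two positive Hermitian forms, that expands $\det(\omega_\beta+\sqrt{-1}\partial\bar\partial u)-\det\omega_\beta$ as $\Delta_\beta u$ (the trace against $\omega_\beta$) plus higher-order terms in $\sqrt{-1}\partial\bar\partial u$; since the right side of the Monge--Ampère equation is the constant $\det\omega_\beta$, this yields, after working in the singular polar/holomorphic coordinates where $\omega_\beta$ is a constant tensor (Remark \ref{rmk omega beta has 0 ossilation in polar}),
\begin{equation}
\Delta_\beta u = \det(\sqrt{-1}\partial\bar\partial\phi) - \det(\sqrt{-1}\partial\bar\partial\phi_\beta) + \text{(quadratic and higher in }\sqrt{-1}\partial\bar\partial u),
\end{equation}
so that the $C^{\alpha,\beta}$ seminorm of $\Delta_\beta u$ is bounded by $\epsilon[\sqrt{-1}\partial\bar\partial u]_{\alpha,\beta}$ with $\epsilon=\epsilon(\delta)\to 0$ as $\delta\to 0$ — this is exactly estimate (\ref{equ laplacian controlled by small constant times ddbar}) specialized to the Ricci-flat case, where the ``inhomogeneous'' term $e^{\widehat F_i}$ is literally constant and contributes nothing to the seminorm.

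Then I would invoke the linear Schauder theory for the conical Laplacian $\Delta_\beta$ on balls (the Donaldson/CYW interior Schauder estimate, plus the weak-solution theory recalled in the introduction, which transforms $W^{1,2}$-inequalities against $\omega_\beta$ into Euclidean ones in polar coordinates): a solution $u$ of $\Delta_\beta u = f$ on $B_0(1/2)$ with $f\in C^{\alpha,\beta}$ satisfies
\begin{equation}
[\sqrt{-1}\partial\bar\partial u]_{\alpha,\beta,B_0(1/4)} \le C\Big([f]_{\alpha,\beta,B_0(1/2)} + [\sqrt{-1}\partial\bar\partial u]_{\alpha,\beta,B_0(1/2)}\cdot(\text{interpolation}) + |\sqrt{-1}\partial\bar\partial u|_{0,B_0(1/2)}\Big),
\end{equation}
but with the norms scaled so the middle term comes with a small factor; absorbing $[\sqrt{-1}\partial\bar\partial u]_{\alpha,\beta}$ from the right (using that $[f]_{\alpha,\beta}\le\epsilon[\sqrt{-1}\partial\bar\partial u]_{\alpha,\beta}$ and an interpolation inequality to control intermediate norms by $\epsilon[\sqrt{-1}\partial\bar\partial u]_{\alpha,\beta}+C|\sqrt{-1}\partial\bar\partial u|_0$) yields $[\omega]_{\alpha,\beta,B_0(1/4)}=[\sqrt{-1}\partial\bar\partial u]_{\alpha,\beta,B_0(1/4)}\le C|\sqrt{-1}\partial\bar\partial u|_{0,B_0(1/2)}\le C$, since $|\sqrt{-1}\partial\bar\partial u|_0\le C$ already from the $C^{1,1}$-bound $\frac{1}{1+\delta}\omega_\beta\le\omega\le(1+\delta)\omega_\beta$. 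This is the scheme sketched in CYW after formula (37) of \cite{CDS2} and referenced in the proof of Theorem \ref{Evans Krylov}.

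The main obstacle I anticipate is the absorption step: one needs the constant $C$ in the linear conical Schauder estimate to be \emph{independent} of $\delta$, and one needs the ``error'' seminorm $[\det(\sqrt{-1}\partial\bar\partial\phi)-\Delta_\beta u]_{\alpha,\beta}$ to be genuinely bounded by $\epsilon(\delta)[\sqrt{-1}\partial\bar\partial u]_{\alpha,\beta}$ with $\epsilon(\delta)$ small — this requires care because differentiating a product of three or more factors $\sqrt{-1}\partial\bar\partial u$ produces cross terms, and in the conical setting the relevant product/multiplication estimates in $C^{\alpha,\beta}$ must be checked (the tangential and normal weights $|z|^{2-2\beta}$, $|z|^{1-\beta}$ interact nontrivially under multiplication). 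A secondary subtlety is that $u$ is a priori only $C^{2,\alpha,\beta}$ and the absorption requires first knowing $[\sqrt{-1}\partial\bar\partial u]_{\alpha,\beta}$ is \emph{finite} on $B_0(1/2)$ before one can move it to the left-hand side; this is granted by the hypothesis that $\omega$ is a $C^{\alpha,\beta}$ conical metric, so it suffices to obtain the quantitative bound. Once these points are settled — and they are precisely the content of the Schauder machinery in \cite{Don} and \cite{CYW} that we are allowed to cite — the estimate on $B_0(1/4)$ follows, and a standard covering argument upgrades it to any fixed interior ball if needed.
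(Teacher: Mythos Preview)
Your approach is essentially the paper's: linearize $\det(i\partial\bar\partial\phi)=1$ around $\omega_\beta$ so that $[\Delta_\beta\phi]_{\alpha,\beta}\le\epsilon(\delta)[i\partial\bar\partial\phi]_{\alpha,\beta}$, feed this into the conical Schauder estimate, and absorb. The paper packages the nonlinear step via $F(M)=\det M-\operatorname{tr} M$, which has vanishing gradient at $M=I$ and hence small Lipschitz constant on $\{(1-\delta)I\le M\le(1+\delta)I\}$; your Taylor expansion of the determinant is the same observation.

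Two points where the paper is sharper than your sketch. First, the paper does \emph{not} write $u=\phi-\phi_\beta$; instead it invokes the Poincar\'e--Lelong solution (Proposition~\ref{Solvability of the ddbar equation}) to produce a potential $\phi$ with $i\partial\bar\partial\phi=\omega$ and $|\phi|_{0,B(4/5)}\le C$ depending only on $|\omega|_{0,\beta}$. This matters because the Schauder estimate the paper actually uses has $|\phi|_{0}$, not $|i\partial\bar\partial\phi|_{0}$, as the lower-order term. Your route (lower-order term $|i\partial\bar\partial u|_0$) is also workable, but requires a slightly nonstandard Schauder form. Second, your absorption step as written compares $[i\partial\bar\partial u]_{\alpha,\beta}$ on $B(1/4)$ against the same quantity on $B(1/2)$, which cannot be absorbed directly. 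The paper avoids this by using the weighted interior seminorms $[\,\cdot\,]^{(\star)}_{\alpha,\beta,B(1)}$ throughout, so both sides live on the same domain and absorption is immediate once $C\epsilon<\tfrac12$. Your phrase ``with the norms scaled so the middle term comes with a small factor'' is gesturing at exactly this, but the $(\star)$-norm device is what makes it rigorous.
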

\begin{proof}{of Proposition \ref{prop Gap}:}\ By the solution to the Poincare-Lelong equation, we obtain a potential $\phi$ such that 
\begin{equation}\label{equ bound on C0 norm of phi by poincare lelong solution}
i\partial \bar{\partial}\phi=\omega,\ |\phi|_{0,B(\frac{4}{5})}\leq C.
\end{equation}

Under the singular coordinates and the basis $\mathfrak{a},\ du_{1},...\ du_{n-1}$ for $T^{1,0}$, we consider $i\partial \bar{\partial} \phi$ under these basis, as in page 11 of \cite{CYW}. 

 Then note that, by letting $F(M)=detM-trM$, we consider 
 \[|F(i\partial \bar{\partial} \phi(x))-F(i\partial \bar{\partial} \phi(y))|.\]
Since $(1-\delta)I\leq i\partial \bar{\partial}\phi\leq (1+\delta)I$, we obtain
\begin{equation}
|F(i\partial \bar{\partial} \phi(x))-F(i\partial \bar{\partial} \phi(y))|\leq \epsilon |i\partial \bar{\partial} \phi(x)-i\partial \bar{\partial} \phi(y)|,
\end{equation}
for some $\epsilon(\delta)$ such that $\lim_{\delta\rightarrow 0}\epsilon(\delta)=0$. 
Hence 
\begin{equation}
[det(i\partial \bar{\partial}\phi)-\Delta \phi]^{(\star)}_{\alpha,B(1)}
\leq \epsilon [i\partial \bar{\partial}\phi]_{\alpha,B(1)}^{(\star)}.
\end{equation}
Since $det(i\partial \bar{\partial}\phi)=1$ in polar coordinates, we deduce 
\begin{equation}\label{equ laplacian controlled by small constant times ddbar}
[\Delta \phi]^{(\star)}_{\alpha,B(1)}
\leq \epsilon [i\partial \bar{\partial}\phi]_{\alpha,B(1)}^{(\star)}.
\end{equation}
Combining (\ref{equ laplacian controlled by small constant times ddbar}) and  the usual conic Schauder estimate
\begin{equation}\label{equ CDS small osc trick 1}
[i\partial \bar{\partial}\phi]_{\alpha,\beta,B(1)}^{(\star)}\leq C\{[\Delta \phi]^{(\star)}_{\alpha,\beta,B(1)}+|\phi|_{0,B(1)}\},
\end{equation}
we end up with 
\begin{equation}
[i\partial \bar{\partial}\phi]_{\alpha,\beta,B(1)}^{(\star)}\leq C\epsilon[i\partial \bar{\partial} \phi]^{(\star)}_{\alpha,\beta,B(1)}+C|\phi|_{0,B(1)}.
\end{equation}
Let $\delta$ be small enough such that $C\epsilon<\frac{1}{2}$, we deduce
\begin{equation}\label{equ bounding iddbar by c0 of phi}
[i\partial \bar{\partial}\phi]_{\alpha,\beta,B(1)}^{(\star)}\leq C|\phi|_{0,B(1)}\leq C.
\end{equation}

The proof is complete. 
\end{proof}

 \section{ Bounded weakly-subharmonic functions and weak maximum principle. \label{section Bounded weakly-subharmonic functions and weak maximum principle}}
   In this section, we work in the polar coordinates (the balls, domains are all with respect to the polar coordinates). We mainly show the Dirichlet boundary problem 
   is solvable, in the sense of Theorem \ref{thm Dirichlet problem for C2alpha boundary value} and \ref{thm Dirichlet problem for continuous boundary value}. These are important in the last part of the proof of Theorem \ref{thm Liouville} in section \ref{section Proof of Theorem liouville}.

 Following  \cite{CDS2}, the following Lemma is true on bounded weakly-harmonic and  weakly-subharmonic functions. 
 \begin{lem}\label{lem Linfty solution is actually a weak solution}

Suppose  $u\in C^{2}(B(1)\setminus D)\cap L^{\infty}(B(1))$. Then 
\begin{enumerate}\item Suppose $\Delta_{\omega}u\geq 0$ over $B(1)\setminus D$, then $u$ is a weak subsolution to $\Delta_{\omega}u\geq 0$ in $B(1)$;
\item Suppose $\Delta_{\omega}u= 0$ over $B(1)\setminus D$, then $u$ is a weak solution to $\Delta_{\omega}u= 0$ in $B(1)$,  and $u\in C^{\alpha,\beta}$ for some $\alpha>0$.
\end{enumerate} 
\end{lem}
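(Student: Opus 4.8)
The plan is to exploit the key structural observation already emphasized in the paper: in the polar coordinates $(r,\theta,u_2,\dots,u_n)$ the cone metric $\omega$ (equivalently $\omega_\beta$) is uniformly quasi-isometric to the Euclidean metric $g_E$ away from $D=\{r=0\}$, and $D$ has (real) codimension two. So the whole statement reduces to a classical capacity/removable-singularity argument for the Euclidean Laplacian. First I would fix a cutoff adapted to the singular set: let $\chi_\varepsilon(r)$ be a smooth function that is $0$ for $r\le \varepsilon$, $1$ for $r\ge 2\varepsilon$, with $|\nabla_E\chi_\varepsilon|\le C/\varepsilon$ supported on the annular region $\varepsilon\le r\le 2\varepsilon$. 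Because $D$ has real codimension two, the Euclidean volume of $\{r\le 2\varepsilon\}\cap B(1)$ is $O(\varepsilon^2)$, hence $\int |\nabla_E\chi_\varepsilon|^2\,d\mathrm{vol}_E = O(1)$ uniformly in $\varepsilon$ (in fact $\int |\nabla_E \chi_\varepsilon|^2 \to 0$ if one uses the sharper logarithmic cutoff $\chi_\varepsilon = \min\{1,\max\{0,\log(r/\varepsilon^2)/\log(1/\varepsilon)\}\}$, which is the device I would actually use).

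For item (1): given a nonnegative test function $\varphi\in C_c^\infty(B(1))$, I want to show $\int_{B(1)} u\,\Delta_\omega\varphi\;\omega^n \ge 0$, where $\Delta_\omega$ is understood in the weak (divergence) form so the integral is $-\int \langle \nabla_\omega u,\nabla_\omega\varphi\rangle_\omega\,\omega^n$ once we know $u\in W^{1,2}_{\omega,\mathrm{loc}}$; the first task is therefore to establish that $W^{1,2}_\omega$ membership. Test the classical (smooth, away from $D$) inequality $\Delta_\omega u\ge 0$ against $\chi_\varepsilon^2\varphi$ with $\varphi\ge 0$ smooth: integrating by parts on $B(1)\setminus D$ (legitimate since the support of $\chi_\varepsilon$ avoids $D$) gives
\[
\int \chi_\varepsilon^2\,\langle\nabla_\omega u,\nabla_\omega\varphi\rangle_\omega\,\omega^n
 \le -2\int \chi_\varepsilon\,\varphi\,\langle\nabla_\omega u,\nabla_\omega\chi_\varepsilon\rangle_\omega\,\omega^n .
\]
Apply Cauchy–Schwarz to the right-hand side and absorb, using the uniform bound on $\int|\nabla_E\chi_\varepsilon|^2$ together with the quasi-isometry $\omega\simeq g_E$ and the $L^\infty$ bound on $u$ (combined with a standard local Caccioppoli estimate for the smooth region to bound $\int \chi_\varepsilon^2|\nabla_\omega u|^2$ in terms of $\|u\|_{L^\infty}$). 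This simultaneously shows $u\in W^{1,2}_{\omega,\mathrm{loc}}(B(1))$ and, letting $\varepsilon\to 0$ (so $\chi_\varepsilon\to 1$ a.e. and the cross term tends to zero by the logarithmic-cutoff trick), yields $\int\langle\nabla_\omega u,\nabla_\omega\varphi\rangle_\omega\,\omega^n\le 0$ for all $0\le\varphi\in C_c^\infty$, i.e. $u$ is a weak subsolution. Transcribing everything into Euclidean polar coordinates via integration by parts (as the paper insists is always possible because only $W^{1,2}$-quantities appear) makes each step literally a classical estimate.

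For item (2): run the same argument with $\varphi$ of arbitrary sign to conclude $u$ is a weak solution of $\Delta_\omega u=0$ in $B(1)$; then invoke the De Giorgi–Nash–Moser theory (Chap. III of \cite{LSU}, Chap. 8 of \cite{GT}, Chap. 4 of \cite{HanLin}), which the paper has already declared applicable after the Euclidean transcription, to deduce $u\in C^{\alpha,\beta}$ for some $\alpha>0$ — here the $C^{\alpha,\beta}$ Hölder class is just Euclidean $C^\alpha$ in the polar coordinates, consistent with the definitions in Section \ref{section of C0 estimate and C11 estimate}. The main obstacle is the very first step: verifying that $u\in W^{1,2}_{\omega,\mathrm{loc}}$ across $D$, i.e. that the a priori only-$L^\infty$ function does not concentrate gradient energy on the divisor; everything downstream is then routine. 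This is exactly where the codimension-two nature of $D$ (equivalently, the logarithmic capacity of $D$ being zero) is essential, and it is the reason a plain power-cutoff is not quite enough and one must use the logarithmic cutoff to kill the cross term in the limit.
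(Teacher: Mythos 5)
Your proposal is correct and rests on the same idea as the paper: cut off a tubular neighborhood of $D$ and use the divergence structure (Caccioppoli) plus the $L^\infty$ bound to show the gradient has no mass concentrating on $D$, then conclude with the classical weak-solution theory after the Euclidean transcription in polar coordinates. There is, however, a structural difference worth noting. The paper first proves item (2): it tests $\Delta_\omega u=0$ directly against $\eta_\varepsilon^2 u$ with a simple \emph{Lipschitz linear} cutoff $\eta_\varepsilon$, derives $\int\eta_\varepsilon^2|\nabla_\omega u|^2\le C\int|\nabla_\omega\eta_\varepsilon|^2 u^2\le C\|u\|_{L^\infty}^2$ (the annulus $\{\varepsilon\le r\le\tfrac{7\varepsilon}{6}\}$ contributes $O(\varepsilon^2)$ in Euclidean volume, balancing $|\nabla\eta_\varepsilon|^2=O(\varepsilon^{-2})$), hence $u\in W^{1,2}_\omega$, and then simply \emph{cites} Lemma~2.5 of \cite{WYQ} for the conclusion that such a function is a global weak solution; item (1) is then reduced to this by replacing $u$ with $u-\inf_{B(1)}u\ge 0$. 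You instead verify the weak-form inequality \emph{directly} by testing against $\chi_\varepsilon^2\varphi$ and passing to the limit, which forces you to make the cross term $\int\chi_\varepsilon\varphi\,\nabla_\omega u\cdot\nabla_\omega\chi_\varepsilon$ vanish; since $\int|\nabla\chi_\varepsilon|^2$ is only bounded (not small) for a linear cutoff, you correctly note that one must upgrade to the logarithmic cutoff (the zero-capacity device). This buys self-containedness — you do not need the external lemma from \cite{WYQ} — at the price of a slightly more delicate cutoff. One thing to fill in for your version of item (1): the Caccioppoli bound $\int\chi_\varepsilon^2|\nabla_\omega u|^2\le C\|u\|_{L^\infty}^2$ that you invoke is not "standard" until you have shifted $u$ by its infimum (as the paper does at the end), since the subsolution Caccioppoli estimate is typically stated for nonnegative functions; this is a minor omission. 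Both routes are valid; the paper's is shorter, yours is more self-contained.
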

\begin{proof}This is proved by cutting off. Let $\eta_{\epsilon}=\Psi(\frac{1}{3}-d_{p}) \Psi(\frac{r}{\epsilon}-1) $, where $\Psi$ is  the Lipshitz cutoff function 
\begin{displaymath}\Psi(s)=\left \{
\begin{array}{ccr}& 0,\ s\leq 0;\\
 & 6s,\ 0\leq s\leq \frac{1}{6};\\
   & 1,\  \frac{1}{6}\leq s\leq \frac{1}{3}.\\
\end{array} \right.
\end{displaymath}
 Notice that $\Psi^{\prime}(s)\leq 6$ almost everywhere, then since $\omega$ is quasi isometric to the Euclidean metric in the polar coordinates, we have when $\epsilon \leq \frac{1}{100}$ that 
 \begin{equation}\label{equ gradient bound of etaepsilon}
 |\nabla_{\omega}\eta_{\epsilon}|\leq \frac{C}{\epsilon}.
 \end{equation}
$\eta_{\epsilon}$ not only cutoff the boundary of $B(1)$, but also cutoff the divisor $D$. Since $u$ is smooth away from divisor, we multiply both handsides of the harmonic equation (in item 2) by $\eta_{\epsilon}^2 u$ and integrate by parts to get  

\begin{equation}
-\int\eta_{\epsilon}^2|\nabla_{\omega} u|^2-2\int \eta_{\epsilon} u<\nabla_{\omega} u,\ \nabla_{\omega} \eta_{\epsilon}>=0.
\end{equation}
Then by Cauchy-Schawartz inequality, we get
\begin{equation}\label{equ bound on gradient of etaepsilon times u 1}
\frac{1}{2}\int\eta_{\epsilon}^2|\nabla_{\omega} u|^2\leq C\int |\nabla_{\omega}\eta_{\epsilon}|^2 u^2.
\end{equation}
Thus, by the condition $|u|_{L^{\infty}}< \infty$, the bound (\ref{equ gradient bound of etaepsilon}), and the definition of $\eta_{\epsilon}$, we obtain 
\begin{equation}\label{equ bound on gradient of etaepsilon times u 2}
\int |\nabla_{\omega}\eta_{\epsilon}|^2 u^2\leq C|u|_{L^{\infty}}^2\int_{\epsilon}^{\frac{7\epsilon}{6}}\frac{1}{\epsilon^2}rdr\leq  C|u|_{L^{\infty}}^2. 
\end{equation} 

Hence (\ref{equ bound on gradient of etaepsilon times u 1}) and (\ref{equ bound on gradient of etaepsilon times u 2}) imply 
\begin{equation}
\int\eta_{\epsilon}^2|\nabla_{\omega} u|^2\leq C,
\end{equation}
where $C$ is independent of epsilon! Therefore let $\epsilon\rightarrow 0$, we get 
\begin{equation}\int_{B(\frac{1}{6})\setminus D}|\nabla_{\omega} u|^2\leq C.
 \end{equation}
 By Lemma 2.5 of \cite{WYQ}, $u$ is a weak solution to (\ref{equ omega harmonic equation}). By Theorem 8.22 of \cite{GT} or \cite{LSU}, 
 we deduce $u\in C^{\alpha,\beta}$.
 
  The statement on subharmonicity is proved in the same way, by considering $u-\inf{u}_{B(1)}$, which is nonnegative. 
\end{proof}

 Recall the classical \emph{weak maximum principle} for the subharmonic function on Euclidean space. Suppose $\Omega \subset \mathbb{R}^n$ is a bounded open subset, if $u\in C^2(\Omega)\cap C^0(\bar{\Omega})$ satisfies $\Delta u\geq 0$, then $\sup_\Omega u=\sup_{\partial\Omega} u$.

Let $\Omega\subset \mathbb{C}^n$ be a connected bounded open subset which intersects $D$, let $\omega$ be a weak conical K\"ahler metric i.e  a smooth K\"ahler metric on $\Omega\backslash D$ and satisfies  $C^{-1}\omega_{\beta}\leq \omega\leq C\omega_{\beta}$.

\begin{thm}\label{thm weak maximal principle} (Weak Maximum Principle)
 
 Let $u$ be a $D$-subharmonic  function on $\Omega$ (in the sense of Definition \ref{def D-subharmonic function}), then 
 \[\sup_\Omega u=\sup_{\partial \Omega} u.\]
  \end{thm}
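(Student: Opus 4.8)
The plan is to reduce the statement to the classical Euclidean weak maximum principle by exploiting the fact, emphasized throughout this section, that in the polar coordinates $(r,\theta,u_2,\dots,u_n)$ a conical metric $\omega$ is quasi-isometric to the Euclidean metric $g_E$ of \eqref{equ def of g_E in polar coordinates}. Since $D$-subharmonicity (Definition \ref{def D-subharmonic function}) is, per the preceding Lemma \ref{lem Linfty solution is actually a weak solution}, equivalent to $u$ being a genuine $W^{1,2}_{\omega}$ weak subsolution of $\Delta_\omega u\geq 0$ on $\Omega$, and since integration by parts transforms $W^{1,2}$-inequalities for $\omega$ into $W^{1,2}$-inequalities for $g_E$, the function $u$ is also a weak $g_E$-subsolution of a uniformly elliptic divergence-form operator $\Delta_\omega$ with bounded measurable coefficients (comparable to the identity by the quasi-isometry constant). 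The weak maximum principle for such operators is classical (Theorem 8.1 of \cite{GT}), and this is exactly what I would invoke.

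First I would fix the setup: let $M=\sup_\Omega u$, which is finite since $u\in L^\infty(\Omega)$ by hypothesis, and let $m=\sup_{\partial\Omega}u$; the inequality $M\geq m$ is trivial, so it suffices to prove $M\leq m$. Second, using the cutoff argument from the proof of Lemma \ref{lem Linfty solution is actually a weak solution} (the function $\eta_\epsilon$ that simultaneously cuts off $\partial\Omega$ and the divisor $D$), I would verify that $u\in W^{1,2}_{\mathrm{loc}}(\Omega)$ — measured with respect to $\omega$ or equivalently $g_E$ — and that for every nonnegative $\varphi\in C_c^\infty(\Omega\setminus D)$ one has $\int \langle\nabla_\omega u,\nabla_\omega\varphi\rangle_\omega\,\omega^n\leq 0$; then a density argument (again via $\eta_\epsilon$, using $|u|_{L^\infty}<\infty$ and the gradient bound \eqref{equ gradient bound of etaepsilon}) upgrades this to all nonnegative $\varphi\in W^{1,2}_0(\Omega)$, so $u$ is a bona fide weak subsolution across $D$. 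Third, rewriting $\int\langle\nabla_\omega u,\nabla_\omega\varphi\rangle_\omega\,\omega^n=\int a^{ij}\,\partial_i u\,\partial_j\varphi\,dx$ with $(a^{ij})$ the (symmetric, bounded, uniformly elliptic away from nothing — the quasi-isometry bound is global) coefficient matrix in polar coordinates, $u$ is a weak subsolution of a uniformly elliptic operator in divergence form on the Euclidean domain $\Omega$; by Theorem 8.1 of \cite{GT} (weak maximum principle for $W^{1,2}$ subsolutions), $\sup_\Omega u\leq \sup_{\partial\Omega}u^+\leq \sup_{\partial\Omega}u$ provided $\sup_{\partial\Omega}u\geq 0$, and the general case follows by replacing $u$ with $u-m$.

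The main obstacle, and the one point requiring care rather than a black-box citation, is the passage across the divisor $D$: one must ensure that the weak subsolution inequality, which is only assumed/known on $\Omega\setminus D$ a priori, extends to test functions that do not vanish near $D$. This is precisely where the boundedness $u\in L^\infty$ and the smallness of the capacity of $D$ (manifested in the estimate $\int|\nabla_\omega\eta_\epsilon|^2 u^2\leq C|u|_{L^\infty}^2\int_\epsilon^{7\epsilon/6}\epsilon^{-2}r\,dr\to 0$ as in \eqref{equ bound on gradient of etaepsilon times u 2}) does the work: the codimension-two singular set $D$ is removable for bounded $W^{1,2}$ subsolutions. Once that removability step is in hand, the rest is a direct appeal to \cite{GT}. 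I would also remark that connectedness of $\Omega$ is not needed for the weak principle (only for a strong one), so no hypothesis is wasted.
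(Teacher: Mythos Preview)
There is a genuine gap. Your argument hinges on the claim that $D$-subharmonicity in the sense of Definition~\ref{def D-subharmonic function} is, via Lemma~\ref{lem Linfty solution is actually a weak solution}, equivalent to being a $W^{1,2}_\omega$ weak subsolution. But Lemma~\ref{lem Linfty solution is actually a weak solution} carries the hypothesis $u\in C^2(\Omega\setminus D)\cap L^\infty(\Omega)$, whereas the definition of $D$-subharmonic only asks for $u\in C^0(\bar\Omega\setminus D)\cap L^\infty(\bar\Omega)$ together with the harmonic-lifting comparison on balls disjoint from $D$; the Remark immediately following that definition explicitly drops the $C^2$ requirement precisely so that the class is closed under the Perron construction. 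Without $C^2$ smoothness away from $D$, the integration-by-parts step that feeds into \eqref{equ bound on gradient of etaepsilon times u 1}--\eqref{equ bound on gradient of etaepsilon times u 2} cannot even begin, and you have no a priori reason to believe $u\in W^{1,2}_{\mathrm{loc}}$. A merely continuous sub-mean-value function for a variable-coefficient operator is not automatically a $W^{1,2}$ weak subsolution; bridging that gap would require a separate regularity argument (mollification plus Caccioppoli, or a viscosity-to-distributional conversion) that you have not supplied, and which is in any case more work than the result warrants. The boundary requirement $(u-m)^+\in W^{1,2}_0(\Omega)$ needed for Theorem~8.1 of \cite{GT} is likewise unverified.

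The paper's proof sidesteps this entirely by staying within the harmonic-lifting framework. It perturbs $u$ to $u_\epsilon=u+\epsilon\log|z|$; since $u$ is bounded, $u_\epsilon\to-\infty$ near $D\cap\Omega$, so any interior supremum of $u_\epsilon$ must be attained away from $D$, where the classical strong maximum principle applied to the harmonic lift on a small ball rules it out. Hence $\sup_\Omega u_\epsilon$ is attained on $\partial\Omega\setminus D$, and letting $\epsilon\to0$ gives the conclusion. This barrier argument uses only the $C^0\cap L^\infty$ hypotheses actually present in the definition and never needs $u$ to be in any Sobolev space.
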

  \begin{rmk}The difference of our weak maximal principle from Jeffres' trick in \cite{Jeffres} is that our weak maximal principle applies to $L^{\infty}$-functions, while Jeffres' trick requires the function to have some H\"older continuity property near $D$.
  \end{rmk}
  \begin{proof}{of Theorem \ref{thm weak maximal principle}:}
  
Notice that the auxiliary function $\text{log }|z|$ is pluri-harmonic in $\mathbb{C}^n\backslash D$ under any K\"ahler metric, therefore $u_\epsilon=u+\epsilon \text{ log }|z|$ is also weak-subharmonic away from $D$ (smaller than harmonic lift on any ball with no intersection with $D$). However since $u$ is bounded, $u_\epsilon(p)$ goes to $-\infty$ as $p$ approaches $D\cap \Omega$. We show here that $u+\epsilon \text{ log }|z|$ can't attain interior maximum. 

If not, there exists $q\notin D$ such that 
\[u_\epsilon(q)=\sup_{\Omega}u_\epsilon . \]
Choose a ball $B_{q}$ with no intersection with $D$ and there exists some point $b\in \partial B_{q}$ such that 
\begin{equation}\label{equ u has attain small value on boundary points}
u_\epsilon(b)<\sup_{\Omega}u_\epsilon.
\end{equation}

 Then we consider the harmonic lifting of $u_\epsilon$ over $B_{q}$ as $\bar{u}_{\epsilon}$. By definition, we have 
$\bar{u}_{\epsilon}\geq u_\epsilon$. By maximal principle on $B_{q}$, we deduce $\sup\bar{u}_{\epsilon}\leq \sup u_{\epsilon}|_{\partial B_{q}}$.
Then we see that $\bar{u}_{\epsilon}$ attains interior maximum in $B_{q}$ at $q$. This means the harmonic funtion $\bar{u}_{\epsilon}$ is a constant over the whole $B_{q}$, which contradicts (\ref{equ u has attain small value on boundary points}).

Thus $u_{\epsilon}$ attain maximum on $\partial \Omega\setminus D$. 
We compute for any $p\notin D$ that 
\begin{eqnarray*}
& &u(p)
\\&=& u_{\epsilon}(p)-\epsilon \log|z|(p)
\\&\leq & \sup_{\partial \Omega\setminus D}u_{\epsilon}-\epsilon \log|z|(p)
\\&\leq & \sup_{\partial \Omega\setminus D}u+(\sup_{\partial \Omega\setminus D}\epsilon \log|z|)-\epsilon \log|z|(p)
\\&\leq & \sup_{\partial \Omega\setminus D}\varphi +\epsilon C-\epsilon \log|z|(p).
\end{eqnarray*}
Let $\epsilon \rightarrow 0$ we obtain 
\[u(p)\leq  \sup_{\partial \Omega\setminus D}\varphi.\]
Since $u\in C^{0}(\bar{\Omega}\setminus D)$, the proof  is completed. 
  \end{proof}

\section{Dirichlet problem of conical elliptic equations.}
In this section we work in the polar coordinates. 
\begin{Def}\label{def D-subharmonic function} We say $v\in C^{0}(\bar{\Omega}\setminus D)\cap L^{\infty}(\bar{\Omega})$ to be a $D$-subharmonic function if for any ball $B\in \Omega$ and $B\cap D=\emptyset$, the harmonic lifting $\bar{v}$ satisfies $\bar{v}\geq v$ in $B$. 
\end{Def}
\begin{rmk}In this section we don't require the target function to be in $C^2(\bar{\Omega}\setminus D)$, this is because we want the set of all 
$D$-subharmonic functions under the boundary condition to be closed under harmonic lifting away from $D$. This shows upper envelope  is harmonic away from $D$ and is in $L^{\infty}(\bar{B})$, then 
Lemma \ref{lem Linfty solution is actually a weak solution} can be applied. These are crucial in the proof of Theorem \ref{thm Liouville}
in section \ref{section Proof of Theorem liouville}.
\end{rmk}
\begin{thm}\label{thm Dirichlet problem for continuous boundary value}
Suppose $B$ is a ball. Let $\varphi\in C^{0}(\partial B\backslash D)\cap L^{\infty}(\partial \bar B)$ be a function defined on $\partial  B\setminus D$. Then, there exists a  $\omega$-harmonic function $u$ defined on $  B$ such that $u$ attain  the boundary value $\varphi$  continuously away from $D$. i.e 
\[\Delta_{\omega}u=0\ \textrm{over}\  B\setminus D,\]
and    for any $\xi\in \partial B\setminus D$, we have 
\[ \lim_{x\rightarrow \xi}|u(x)-\varphi(\xi)|= 0.\]

\end{thm}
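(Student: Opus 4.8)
The plan is to run the classical Perron method, but working away from $D$ and exploiting the weak-solution theory that the paper has already set up (via the quasi-isometry of $\omega$ with $g_E$ in the polar coordinates). First I would reduce to the case of bounded $\varphi$ and, by subtracting a constant, assume $\varphi \geq 0$. Define the Perron family
\[
\mathcal{S}_\varphi = \{\, v \in C^0(\bar B \setminus D) \cap L^\infty(\bar B) : v \text{ is } D\text{-subharmonic}, \ \limsup_{x \to \xi} v(x) \leq \varphi(\xi) \ \forall \xi \in \partial B \setminus D \,\},
\]
and set $u(x) = \sup_{v \in \mathcal{S}_\varphi} v(x)$. The family is nonempty (constants $\leq \inf \varphi$ work) and uniformly bounded above by $\sup \varphi$ thanks to the weak maximum principle, Theorem \ref{thm weak maximal principle}, applied to each $v$; so $u$ is well-defined and in $L^\infty(\bar B)$.

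Next I would establish that $u$ is $\omega$-harmonic on $B \setminus D$ by the standard harmonic-lifting (Perron) argument: given a ball $B' \Subset B \setminus D$, the harmonic lift of any $v \in \mathcal{S}_\varphi$ over $B'$ stays in $\mathcal{S}_\varphi$ (this is exactly why Definition \ref{def D-subharmonic function} was phrased not to require $C^2$), and Harnack on $B'$ (ordinary Euclidean Harnack, since $\omega$ is smooth there) upgrades a pointwise-increasing sequence of lifts to a locally uniform limit which is harmonic. A diagonal argument over an exhaustion of $B \setminus D$ by such balls then shows $u$ is smooth and $\omega$-harmonic on $B \setminus D$. Because $u$ is also bounded, Lemma \ref{lem Linfty solution is actually a weak solution}(2) applies and gives that $u$ is in fact a weak solution across $D$ and lies in $C^{\alpha,\beta}$ near $D$ — this is the point that makes the construction globally meaningful rather than merely a solution with a puncture.

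The main obstacle is attainment of the boundary data continuously at points $\xi \in \partial B \setminus D$, i.e. constructing barriers. Since $B$ is a ball in the polar coordinates and $\xi$ lies off $D$, a neighbourhood of $\xi$ sees $\omega$ as a uniformly elliptic smooth metric quasi-isometric to $g_E$; the ball satisfies an exterior ball condition at $\xi$ in the $g_E$-metric, so one can build the classical $g_E$-barrier $w(x) = A(|x - \xi_0|^{-\gamma} - R^{-\gamma})$ for a suitable exterior center $\xi_0$, radius $R$, exponent $\gamma$, and then check — using only $C^{-1}\omega_\beta \le \omega \le C\omega_\beta$ together with smoothness near $\xi$ — that it is an $\omega$-superharmonic (resp. subharmonic) local barrier. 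Sandwiching $u$ between $\varphi(\xi) \pm \epsilon \pm$ (barrier) near $\xi$, using continuity of $\varphi$ at $\xi$ and the weak maximum principle on the small neighbourhood (which may touch $D$, hence the need for Theorem \ref{thm weak maximal principle} rather than Jeffres' trick), then forces $\lim_{x\to\xi} u(x) = \varphi(\xi)$. A minor subtlety to be careful with: the comparison/barrier argument is run on a small ball around $\xi$ that may intersect $D$, and one must invoke the $L^\infty$ version of the maximum principle there and note that the barrier estimates themselves only use the metric away from $D$, so no regularity of $u$ at $D$ is needed for this step beyond boundedness.
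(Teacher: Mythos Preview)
Your proposal is correct and follows essentially the same Perron-method approach as the paper: define the family of $D$-subharmonic subfunctions, take the upper envelope, prove harmonicity on $B\setminus D$ via harmonic lifting (the paper's Claim \ref{clm upper envelope is harmonic away from D}), and establish boundary attainment by barriers. The one minor simplification in the paper worth noting: for the barrier step at $\xi\in\partial B\setminus D$, the paper simply takes the barrier neighborhood $B_\xi(10R)$ small enough that it is disjoint from $D$, so the comparison there is just the classical one and the subtlety you flag about the barrier region touching $D$ does not arise.
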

\begin{thm}\label{thm Dirichlet problem for C2alpha boundary value}Suppose $B$ is a ball. Let $\varphi\in C^{2,\alpha}(\partial  B\backslash D)\cap L^{\infty}(\partial \bar B)$ be a function defined on $\partial  B\setminus D$. Then, there exists a  $\omega-$harmonic function $u$ defined on $  B$ such that $u$ attain  the boundary value $\varphi$ in Lipshitz sense away from $D$. i.e 
\[\Delta_{\omega}u=0\ \textrm{over}\  B\setminus D,\]
and    for any $\xi\in \partial\Omega\setminus D$, there exists a 
postive constant $r_{\xi}$ and $K(\xi)$ such that 
\[ |u(x)-\varphi(\xi)|\leq K(\xi)|x-\xi| \]
for all $x\in B_{\xi}(r_{\xi})\cap B$. Moreover, $u\in C^{2,\alpha}[(\bar{B}\setminus D)]$. 
\end{thm}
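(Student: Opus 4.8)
The plan is to reduce the whole statement to the preceding theorem plus purely classical (smooth) elliptic theory on any region avoiding $D$. Since $\varphi\in C^{2,\alpha}(\partial B\setminus D)\cap L^{\infty}(\partial\bar B)$ is in particular continuous on $\partial B\setminus D$ and bounded, Theorem \ref{thm Dirichlet problem for continuous boundary value} already provides a bounded $\omega$-harmonic function $u$ on $B$ with $\Delta_{\omega}u=0$ on $B\setminus D$ and $\lim_{x\to\xi}u(x)=\varphi(\xi)$ for every $\xi\in\partial B\setminus D$; by Lemma \ref{lem Linfty solution is actually a weak solution}, $u$ is moreover a genuine $W^{1,2}$-weak solution on $B$. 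Hence the only new content left to prove is (i) the Lipschitz rate of boundary attainment at points away from $D$ and (ii) the regularity $u\in C^{2,\alpha}((\bar B)\setminus D)$, and both of these are purely \emph{local} statements at points of $(\bar B)\setminus D$, where $\omega$ is an honest smooth Kähler metric with $C^{-1}\omega_{\beta}\le\omega\le C\omega_{\beta}$, so that $\Delta_{\omega}$ is a uniformly elliptic operator with smooth coefficients.

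\textbf{Main steps.} For the interior, on $B\setminus D$ the equation $\Delta_{\omega}u=0$ is a smooth divergence-form (equivalently non-divergence-form) uniformly elliptic equation, so a bounded weak solution is locally Hölder by De Giorgi--Nash--Moser and then, by the Schauder bootstrap, smooth; in particular $u\in C^{2,\alpha}_{loc}(B\setminus D)$. For the boundary, fix $\xi\in\partial B\setminus D$ and, using that $D$ is closed, choose $r_{\xi}>0$ with $\overline{B_{\xi}(2r_{\xi})}\cap D=\emptyset$; set $U=B_{\xi}(r_{\xi})\cap B$, whose spherical boundary portion $\Gamma=B_{\xi}(r_{\xi})\cap\partial B$ is a piece of the smooth sphere $\partial B$. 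On $\bar U$ the operator $\Delta_{\omega}$ has smooth coefficients and is uniformly elliptic, $u\in C^{0}(\bar U)\cap W^{1,2}(U)$ solves $\Delta_{\omega}u=0$, and $u|_{\Gamma}=\varphi\in C^{2,\alpha}(\Gamma)$. Flattening $\Gamma$ by a smooth diffeomorphism and invoking the classical global Schauder estimate up to the boundary (\cite{GT}, Chapter 6; \cite{HanLin}) yields $u\in C^{2,\alpha}(\overline{U'})$ for a smaller neighbourhood $U'\ni\xi$. In particular $u(\xi)=\varphi(\xi)$, and with $K(\xi)=\|u\|_{C^{1}(\overline{U'})}$ one gets
\[
|u(x)-\varphi(\xi)|=|u(x)-u(\xi)|\le K(\xi)\,|x-\xi|,\qquad x\in U'\cap B,
\]
which is the asserted Lipschitz estimate (in fact the stronger $C^{2,\alpha}$ estimate). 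Patching the interior estimate with these boundary estimates over a locally finite cover of $(\bar B)\setminus D$ gives $u\in C^{2,\alpha}((\bar B)\setminus D)$.

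\textbf{Where the difficulty lies.} There is no serious obstacle beyond what is already packaged into Theorem \ref{thm Dirichlet problem for continuous boundary value}; the one point that needs (routine) care is the legitimacy of passing from ``weak $\omega$-harmonic'' to ``classical solution of a smooth uniformly elliptic equation'', i.e.\ the observation that $\omega$ is honestly smooth and quasi-isometric to $\omega_{\beta}$ (hence to $g_{E}$) on any set avoiding $D$, so that the De Giorgi--Nash--Moser theory and the interior and boundary Schauder machinery of \cite{GT}, \cite{HanLin}, \cite{LSU} apply verbatim there, together with checking that flattening the boundary preserves smoothness and uniform ellipticity of the operator. If one prefers to avoid quoting boundary Schauder for the Lipschitz rate alone, it can instead be obtained by an explicit barrier at $\xi$: the exterior-sphere condition for the ball $B$ yields a smooth super/subsolution of $\Delta_{\omega}$ near $\xi$ with linear growth, and combining it with the Lipschitz modulus of $\varphi$ at $\xi$ via the weak maximum principle (Theorem \ref{thm weak maximal principle}) on $U$ gives $|u(x)-\varphi(\xi)|\le K(\xi)|x-\xi|$; the $C^{2,\alpha}$ conclusion, however, still requires the Schauder estimate.
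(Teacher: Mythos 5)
Your proof is correct and follows essentially the same route as the paper: both take the Perron solution from Theorem \ref{thm Dirichlet problem for continuous boundary value}, observe that on any set avoiding $D$ the operator $\Delta_{\omega}$ is a smooth uniformly elliptic operator, and conclude the Lipschitz boundary attainment and $C^{2,\alpha}$ regularity up to $\partial B\setminus D$ from the classical barrier plus boundary Schauder machinery of \cite{GT} (the paper cites the proof of Theorem 6.14 there). Your localize-and-flatten presentation merely makes explicit what the paper's citation of GT packages up.
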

\begin{proof}{of Theorem \ref{thm Dirichlet problem for continuous boundary value} and \ref{thm Dirichlet problem for C2alpha boundary value}:}   Given $\varphi\in L^{\infty}( B)\cap C^{0}( B\setminus D)$, we define the value of $\varphi$ at $p\in D\cap \partial  B$ as \[\varphi(p)=\lim_{x\rightarrow p,\ p\notin D}\inf\varphi(x).\]

Define \[S_{\varphi}=\{u|u\ \textrm{is D-subharmonic and}\ u\leq \varphi\ \textrm{over}\ \partial B\setminus D\},\] 
and the upper-envelope as \[u(x)=(\sup_{u\in S_{\varphi}} u)(x),\ x\in B\setminus\ D.\] We now prove the claim
\begin{clm}\label{clm upper envelope is harmonic away from D} $\Delta_{\omega} u=0$ over $ B\setminus D$.
\end{clm}
This goes exactly as in \cite{GT}, except the harmonicity holds only over $ B\setminus D$ and the harmonic-lifting are performed away from $B\setminus D$. For the reader's convenience we include the crucial detail here. 
    Suppose $p\notin D$ and $\lim_{k\rightarrow \infty} v_{k}(p)\rightarrow u(p)$, we  choose $B_{p}(R)$ with no intersection with $D$. We consider the harmonic lifting of $v_{k}$ in $B_{p}(R)$  as $\bar{v}_{k}$. Then 
    \begin{equation}\label{equ relation of upper envelope and bar v}
    \lim_{k\rightarrow \infty}\bar{v}_{k}=\bar{v}\ \textrm{over}\ B_{p}(\frac{R}{2}),\ u(p)=\bar{v}(p).
    \end{equation}
    It suffices to show $\bar{v}\equiv u$ over $B_{p}(\frac{R}{4})$. If not, there exists a $q\in B_{p}(\frac{R}{4})$ such that $\bar{v}(q)\neq u(q)$. Then there exists a $\widehat{u}\in S_{\varphi}$ such that 
    \begin{equation}
    v(q)<\widehat{u}(q)\leq u(q).
    \end{equation}
    
    Now we refine the sequence $v_{k}$ by considering $max(v_{k},\widehat{u})$ and denote their harmonic lifting over $B(R)$ as $w_{k}$. Then we have 
    \[v_{k}\leq w_{k}\leq u\ \textrm{in}\ B_{p}(\frac{3R}{5}).\] 
    Then let $k\rightarrow \infty$, $w_{k}\rightarrow w_{\infty}$ over 
    $B_{p}(\frac{R}{2})$. Then by maximal principle we have:
    \begin{equation}
    \bar{v}_{\infty}\leq w_{\infty}\leq u\ \textrm{in}\ B_{p}(\frac{R}{2}).
    \end{equation}
    
    Both $\bar{v}_{\infty}$ and $w_{\infty}$ are harmonic. We have 
    \[\bar{v}_{\infty}(q)<\widehat{u}(q),\ \textrm{but}\ \bar{v}_{\infty}(p)=\widehat{u}(p),\]
    This is a contradiction since by strong maximal principle over $B_{p}(R)$ which does not intersect $D$, we have $\bar{v}_{\infty}\equiv w_{\infty}$ over $B_{R}$. The proof of Claim \ref{clm upper envelope is harmonic away from D} is complete.
    
   On the attainability of the boundary value, since the domain we consider is a ball, which is convex,   we choose the barriers at those $p\in \partial B\setminus D$  exactly as  in 
  formula (6.45) of \cite{GT}, with $\tau=1$ and $R$ small enough such that $B_{p}(10R)\cap D=\emptyset$. Then boundary value $\varphi 
  $ is then attained continuously away from $D$. Thus the proof of 
  Theorem \ref{thm Dirichlet problem for continuous boundary value} is
  complete.
  
  Suppose $\varphi\in C^{2,\alpha}(\partial  B\backslash D)\cap L^{\infty}(\partial \bar B)$.   The boundary value $\varphi$ is attainable in Lipshitz-sense  together with the fact that $u\in C^{2,\alpha}(\bar{B}\setminus D)$ are
   trivially implied by the proof of Theorem 6.14 in \cite{GT}. The proof Theorem \ref{thm Dirichlet problem for C2alpha boundary value} is complete.
    \end{proof}
   
   \section{Strong Maximum Principles and Trudinger's estimate. \label{section Strong Maximum Principles and De Giorgi estimate}}
 In this section we prove the strong maximum principle, which is crucial in the proof of Theorem \ref{thm Liouville} in section \ref{section Proof of Theorem liouville}.
 
 Let us  first recall the classical strong maximum principle in Euclidean space, which states the subharmonic function which takes interior supremum would be a constant function. The following main theorem of this section is a generalization of the classical strong maximum principle.
    \begin{thm}\label{thm strong maximal principle}(Strong Maximum Principle)
Let $u\in C^2(\Omega\backslash D)\cap L^{\infty}(\bar\Omega)$ be a bounded real value function on $\Omega$ which satisfies $\Delta_\omega u\geq 0$ on $\Omega\backslash D$, suppose there exists a smaller subdomain $\Omega'\subset\subset \Omega$ such that $\sup_\Omega u=\sup_{\Omega'}u$, then $u$ must be a constant.
\end{thm}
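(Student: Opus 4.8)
The plan is to reduce the statement to the classical strong maximum principle away from $D$ together with the weak solution theory recalled in this section. First I would set $M = \sup_\Omega u = \sup_{\Omega'} u$ and consider the closed set $E = \{x \in \Omega \setminus D : u(x) = M\}$; by hypothesis $E$ meets $\overline{\Omega'}$, but a priori the supremum could be attained only in a limiting sense as one approaches $D$, so the first subtlety is to rule this out. The auxiliary function $u_\epsilon = u + \epsilon \log|z|$ is again subharmonic on $\Omega \setminus D$ (since $\log|z|$ is pluriharmonic there) and tends to $-\infty$ near $D \cap \Omega$; arguing exactly as in the proof of Theorem \ref{thm weak maximal principle}, $u_\epsilon$ cannot attain an interior maximum on any ball disjoint from $D$ unless it is locally constant there. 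This forces the supremum of $u$ over any subdomain compactly contained in $\Omega$ to be genuinely attained at an interior point of $\Omega \setminus D$, so $E$ is nonempty and in fact $E$ has an interior point of $\Omega$ away from $D$, i.e. there is $p \in (\Omega \setminus D) \cap \overline{\Omega'}$ with $u(p) = M$.

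Next I would run the standard connectedness argument of the classical strong maximum principle, but only on the open manifold $\Omega \setminus D$. Around any point $q \in E$ choose a small ball $B_q \Subset \Omega \setminus D$; on $B_q$ the metric $\omega$ is smooth and uniformly elliptic (quasi-isometric to $\omega_\beta$, hence to the Euclidean metric), so the classical interior strong maximum principle applies to $u|_{B_q}$ and yields $u \equiv M$ on $B_q$. Therefore $E$ is open in $\Omega \setminus D$; it is also closed in $\Omega \setminus D$ by continuity, so $E$ is a union of connected components of $\Omega \setminus D$. If $\Omega \setminus D$ were connected this would immediately give $u \equiv M$ on $\Omega \setminus D$, hence on $\Omega$ by $L^\infty$-density.

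The main obstacle is precisely that $\Omega \setminus D$ need not be connected (e.g. $D$ can separate $\Omega$ into several pieces), so I cannot conclude $u \equiv M$ everywhere by pure connectedness; the constancy has to be transported across $D$. To do this I would invoke Lemma \ref{lem Linfty solution is actually a weak solution}: on the component(s) where $u \equiv M$ this is trivial, but the key point is that $u$, being in $C^2(\Omega\setminus D) \cap L^\infty(\overline{\Omega})$ with $\Delta_\omega u \ge 0$, is a \emph{weak} subsolution of $\Delta_\omega u \ge 0$ across $D$ in a neighborhood of any point of $D \cap \Omega$. Since $u - M \le 0$ is a weak subsolution attaining its supremum $0$ on an open subset of such a neighborhood, the weak strong maximum principle (the Harnack inequality / Theorem 8.19--8.20 of \cite{GT}, applicable here because, as explained in the introduction, all the $W^{1,2}$ weak-solution estimates transfer verbatim from $g_E$ to $\omega$ in the polar coordinates) forces $u \equiv M$ on that full neighborhood, in particular on neighboring components of $\Omega \setminus D$. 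Covering $D \cap \Omega$ and using connectedness of $\Omega$ itself then propagates $u \equiv M$ to all of $\Omega$, which completes the proof.
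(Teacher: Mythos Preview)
Your overall strategy is sound and close in spirit to the paper's second proof, but Step~1 has a gap. You claim that the $u_\epsilon = u + \epsilon\log|z|$ trick ``forces the supremum of $u$ over any subdomain compactly contained in $\Omega$ to be genuinely attained at an interior point of $\Omega\setminus D$,'' but this does not follow: for each fixed $\epsilon$ the maximum of $u_\epsilon$ over $\overline{\Omega'}$ is indeed attained off $D$ at some $p_\epsilon$, yet as $\epsilon\to 0$ nothing prevents $p_\epsilon\to D$, and you cannot conclude that $u$ itself achieves its supremum at a point of $\Omega\setminus D$. Fortunately this step is unnecessary: once you know (via Lemma~\ref{lem Linfty solution is actually a weak solution}) that $M-u\ge 0$ is a nonnegative weak supersolution in $W^{1,2}$, the weak Harnack inequality applies directly on any ball $B$ containing a maximizing sequence $p_i$, giving $\|M-u\|_{L^p(B')}\le C\inf_{B''}(M-u)=0$ and hence $u\equiv M$ on $B'$ --- no need to first locate an actual interior maximum point. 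With this correction your Steps~2--3 go through.

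For comparison, the paper gives two proofs. Its Proof~2 is close to your Step~3 but inserts a harmonic-lifting step: one solves the Dirichlet problem (Theorem~\ref{thm Dirichlet problem for continuous boundary value}) to replace $u$ by a harmonic $v\ge u$ with the same boundary values, so that $v$ is $C^{\alpha,\beta}$ by Lemma~\ref{lem Linfty solution is actually a weak solution}(2) and its interior maximum is genuinely attained; the Trudinger--Harnack argument (Proposition~\ref{prop DeGiorgi strong maximal princle}) is then applied to the \emph{harmonic} $v$ rather than to $u$. Your direct appeal to the weak strong maximum principle for the supersolution $M-u$ bypasses this lifting and is arguably cleaner, once Step~1 is dropped. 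The paper's Proof~1 is entirely different: it localizes to a ball with $p_i\to 0\in D$, constructs an explicit barrier $\psi=\tfrac{\delta^2}{2}(1-\delta^2)^{-\beta/2}|z|^\beta-(|z_2|^2+\cdots+|z_n|^2)$, shows $e^{a\psi}$ is $\omega$-subharmonic for large $a$, and obtains a contradiction to the weak maximum principle (Theorem~\ref{thm weak maximal principle}) from the bumped function $u+\epsilon(e^{a\psi}-1)$.
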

 
 \begin{proof}{of Theorem \ref{thm strong maximal principle}:}
 
 We have two proofs for this fact. One is a barrier construction and the other one is also straight forward by Trudinger's estimate. Since both proofs have their own interest, we include both of them here.
 
 Proof $1$: Barrier construction. 
 
 By localizing the supremum of $u$ in the interior $\Omega\cap D$, there is no loss of generality in assuming that we are in a situation where $\Omega=B_2=\{(z,z_2,\cdots,z_n)||z|^2+|z_2|^2+\cdots+|z_n|^2\leq 4\}$, and $\sup_\Omega u=1=\lim_{i\to \infty}u(p_i)$ for a sequence of points $p_i\to o$. 
 
 We prove  by contradiction. Suppose $u$ is not constant function 1,  by  the classical strong maximum principle in the smooth case, we know $u<1$ in  $\{ {1\over 2}\leq |z_1|\leq 1\}$. Then we suppose $u\leq \tau_{0}<1$ in  the ring-shaped piece $R_\delta=\{|z|^2+|z_2|^2+\cdots+|z_n|^2=1, |z_2|^2+\cdots+|z_n|^2\leq \delta^2\}\subset\partial B_1$ for some definite number $\delta>0$ (this could be done by continuity of $u$), and $\sup_{\partial B_1}u=1$.
 
 Let $\psi=\frac{\delta^2}{2}(1-\delta^2)^{-\frac{\beta}{2}}|z|^\beta-(|z_2|^2+\cdots+|z_n|^2)$, then:
  
 \begin{itemize}
 \item On $R_\delta$, $\psi\leq \frac{\delta^2}{2}(1-\delta^2)^{-\frac{\beta}{2}};$

 \item  On $(\partial B_1)\backslash R_\delta$, $\psi\leq-\frac{\delta^2}{2}<0;$
 \item On $B_1\backslash D$, $\Delta_\omega \psi\geq -(n-1)C. $ Since $\psi$ is bounded function, so it is also a global weak subharmonic function by virtue of  Lemma 1.1;
 \item On $B_1\backslash D$, $|\nabla \psi|_\omega^2\geq C^{-1}|\nabla \psi|_{\omega_{(\beta)}}^2\geq C^{-1}\beta^{-2}|z|^{2-2\beta}|\frac{\partial \psi}{\partial z}|^2\frac{1}{4}\delta^4(1-\delta^2)^{-\beta}=\frac{C^{-1}}{16}\delta^4(1-\delta^2)^{-\beta}$.
 \end{itemize}
  
 It follows that $$\Delta_\omega e^{a\psi}=(a^2|\nabla \psi|_\omega^2+a\Delta_\omega\psi)e^{a\psi}\geq (a^2\frac{C^{-1}}{16}\delta^4(1-\delta^2)^{-\beta}-a(n-1)C)e^{a\psi}\geq 0,$$ for $a\geq 16 \delta^{-4}(1-\delta^2)^\beta(n-1)$.
  
  Therefore, for the ``bumped function'' $u_\epsilon=u+\epsilon(e^{a\psi}-1)$ is a bounded subharmonic function on $B_1\backslash D$ since $\Delta_{\omega}u_\epsilon=\Delta_\omega u+\epsilon \Delta_\omega e^{a\psi}\geq 0$. 
  
  On the other hand, $\lim_{i\to \infty}u_\epsilon(p_i)=\lim_{i\to \infty} \{u(p_i)+\epsilon (e^{a\psi(p_i)}-1)\}=1$, 
  while on the boundary:
  
  \begin{itemize}
  \item $\sup_{(\partial B_1)\backslash R_\delta} u_\epsilon \leq \sup_{\partial B_1}\{u+\epsilon (e^{-\frac{\delta^2}{2}}-1)\}\leq 1-\epsilon (1-e^{-\delta^2/2});$ 
  \item $\sup_{R_\delta} u_\epsilon \leq \sup_{R_\delta}u+\epsilon (e^{a/2 \delta^2(1-\delta^2)^{-\frac{\beta}{2}}}-1)\leq \tau_{0}
  +\epsilon (e^{a/2 \delta^2(1-\delta^2)^{-\frac{\beta}{2}}}-1).$
  \end{itemize}
  
 Thus, since $\tau_{0}<0$,   by taking $\epsilon>0$ small enough, we can make $\sup_{\partial B_1}u_\epsilon <1\leq \sup_{B_1}u_\epsilon$, which contradicts the weak maximal principle in Theorem \ref{thm weak maximal principle}.\\

Proof 2: Trudinger's Harnack inequality.   Without loss of generality, we can still
assume $u$ attains interior maximum at $0$. Suppose $u$ is not a constant, then there exists a ball $B_{0}(r_0)$ such that $B_{0}(2r_0)\in \Omega$, and $u\neq u(0)$ at some point in $\partial B_{0}(r_0)$.   Using Theorem \ref{thm Dirichlet problem for continuous boundary value}, we can find a solution $v\in C^{0}(\bar{\Omega}\setminus D)\cap L^{\infty}(\bar{\Omega})$ to the following equation 
\begin{equation}
\Delta_{\omega}v=0\ \textrm{in}\ B_{0}(r_{0}),\ v|_{\partial \Omega}=u| _{\partial \Omega}.
\end{equation} 

By the weak maximal principle in Theorem \ref{thm weak maximal principle}, we have 
\begin{equation}
v(p)\leq  \sup_{\partial B_{0}(r_0)}v=\sup_{B_{0}(r_0)}v\leq u(0)\ \textrm{and}\  v(p)\geq u(p),\ \textrm{for all}\
p \in   B_{0}(r_0).
\end{equation}  
This  means $v$ also attains interior maximum at $0$. 
Using the Trudinger's maximal principle in Proposition \ref{prop DeGiorgi strong maximal princle} (actually we only need the Harnack inequality to be true for some $p_0>0$  for the proof the strong maximal principle), $v\equiv v(0)=u(0)$ is a constant.
This constradicts the hypothesis that   $v(q)=u(q) \neq u(0)$ at some point $q\in \partial B_{0}(r_0)$.

 \end{proof}

We work in the polar coordinates to reformulate the De-Giorge estimate in Theorem 8.18 of \cite{GT} in the following proposition.
\begin{prop}\label{prop DeGiorgi strong maximal princle}(Trudinger's stong maximal principle) Suppose $\omega$ is a weak-conical metric over $B(1)$. Suppose $u\in L^{\infty}(B(1))\cap C^{2}(B(1)\setminus D)$ is a $\omega$-harmonic function i.e
\begin{equation}\label{equ omega harmonic equation}\Delta_{\omega}u=0\ \textrm{in}\ B(1).
\end{equation}
Suppose there exists a ball $B_{p}(r_0)\in {B}(1)$, such that $
u(p)=\sup_{B_{p}(r_0)}u$ or $
u(p)=\inf_{B_{p}(r_0)}u$. Then $u$ is a constant over $B(1)$.
\end{prop}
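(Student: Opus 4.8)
The plan is to reduce the statement to the weak De Giorgi--Nash--Moser theory for the conical Laplacian and then to propagate the conclusion. Replacing $u$ by $-u$ if necessary, we may assume that $u(p)=\sup_{B_{p}(r_0)}u=:M$. By Lemma~\ref{lem Linfty solution is actually a weak solution}, the hypotheses (bounded on $B(1)$, smooth and $\omega$-harmonic on $B(1)\setminus D$) already imply that $u$ is a genuine weak solution of $\Delta_{\omega}u=0$ on all of $B(1)$ and that $u\in C^{\alpha,\beta}(B(1))$ for some $\alpha>0$; in particular $u$ is continuous, so the pointwise supremum above is meaningful and is attained.

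The heart of the proof is a single application of the weak Harnack inequality on the ball $B_{p}(r_0)$. Put $w:=M-u$. Then $w$ is a nonnegative, continuous weak solution of $\Delta_{\omega}w=0$ on $B_{p}(r_0)$, hence in particular a weak supersolution. The decisive point --- the same observation that underlies Appendix~B --- is that in the polar coordinates $\omega$ is quasi-isometric to the Euclidean metric $g_{E}$ (since $\omega$ is comparable to $\omega_{\beta}$, and $\beta^{2}g_{E}\le\omega_{\beta}\le\beta^{-2}g_{E}$), so after an integration by parts every $W^{1,2}_{\omega}$ Caccioppoli/Moser inequality is converted into the corresponding $W^{1,2}_{g_{E}}$-inequality. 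Consequently the weak Harnack inequality of \cite[Theorem~8.18]{GT} (equivalently, the Trudinger--Harnack inequality of Lemma~\ref{lem Trudinger's Harnack inequality}) applies verbatim and yields, for some $p_{0}>0$,
\[
\|w\|_{L^{p_{0}}(B_{p}(r_0/2))}\le C\,\inf_{B_{p}(r_0/4)}w .
\]
Since $p\in B_{p}(r_0/4)$ and $w(p)=0$, the right-hand side vanishes; as $w\ge 0$ is continuous this forces $w\equiv 0$, i.e. $u\equiv M$ on $B_{p}(r_0/2)$. (This much already suffices for the only use of the proposition, namely in Proof~2 of Theorem~\ref{thm strong maximal principle}.)

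It remains to upgrade ``$u\equiv M$ on an open ball'' to ``$u\equiv M$ on $B(1)$''. Since $D$ is a smooth divisor it has real codimension $2$, so $B(1)\setminus D$ is connected; on it $u$ is smooth and satisfies the uniformly elliptic equation $\Delta_{\omega}u=0$ with smooth coefficients, because away from $D$ the form $\omega$ is an honest smooth K\"ahler metric. As $u\equiv M$ on the nonempty open set $B_{p}(r_0/2)\setminus D$, unique continuation for second order elliptic equations with smooth coefficients forces $u\equiv M$ on the connected set $B(1)\setminus D$, and then $u\equiv M$ on $B(1)$ by continuity. The case in which the interior infimum is attained follows by applying the above to $-u$.

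The main obstacle is exactly this last step: absent analyticity of $\omega$ one cannot propagate ``locally constant'' by a soft mean-value argument, so one genuinely has to invoke a unique continuation input --- equivalently, one first has to know that $M$ is the global supremum of $u$ on $B(1)$ before the usual open-and-closed continuation argument can be run. Everything preceding it is a direct transcription of the Euclidean De Giorgi--Nash--Moser estimates, legitimized by the quasi-isometry between $\omega$ and $g_{E}$ in the polar coordinates precisely as in the discussion preceding Appendix~B.
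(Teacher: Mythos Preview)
Your argument is correct and its core---passing from $L^\infty$ to a genuine weak solution via Lemma~\ref{lem Linfty solution is actually a weak solution}, then applying the weak Harnack inequality (Lemma~\ref{lem Trudinger's Harnack inequality}) to $M-u$ to force $u\equiv M$ on a sub-ball---is exactly the paper's proof.

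Where you differ is in the final propagation step. The paper simply writes ``This implies $v\equiv 0$ over $B(1)$'' without justification, whereas you correctly observe that the usual open--closed argument requires $M$ to be the \emph{global} supremum (so that $M-u\ge 0$ on every ball one wants to iterate on), and you supply unique continuation on $B(1)\setminus D$ instead. This is a legitimate way to close the step: $\omega$ is smooth off $D$, so $\Delta_\omega$ has smooth coefficients there, $B(1)\setminus D$ is connected, and Aronszajn--Cordes unique continuation applies to $u-M$. Your proof is therefore a cleaner, fully justified version of the paper's argument rather than a genuinely different route.
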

\begin{proof}{of Proposition \ref{prop DeGiorgi strong maximal princle}:}
    This is a directly corollary of Lemma \ref{lem Trudinger's Harnack inequality}. We just prove the case when  $
u(p)=\sup_{B_{p}(r_0)}u$. Since 
$u\in L^{\infty}(B(1) \setminus D)$, then by Lemma \ref{lem Linfty solution is actually a weak solution}, $u$ is a weak solution to 
(\ref{equ omega harmonic equation}).  Let $v=u(p)-u=(\sup_{B_{p}(r_0)}u)-u$, then $v\geq 0$ in $B_{p}(r_0)$. Then using 
Lemma \ref{lem Trudinger's Harnack inequality}, for some $q>0$ (this is all we need, though Lemma \ref{lem Trudinger's Harnack inequality} says more than this), we have
\begin{equation}\label{equ applying Degiorg estimate to v}|v|_{L^{q}, B_{p}(\frac{r_0}{10})}\leq Cr_{0}^{2n}\inf_{B_{p}(\frac{r_0}{20})} v.
\end{equation}
Using $v(p)=0$ and the $C^{\alpha}$-continuity of $v$ from Lemma \ref{lem Linfty solution is actually a weak solution}, we get $\inf_{B_{p}(\frac{r_0}{10})} v=0$. Hence  (\ref{equ applying Degiorg estimate to v}) implies 
\[|v|_{L^{q}, B_{p}(\frac{r_0}{10})}=0,\]
which means $v=0$ in $B_{p}(\frac{r_0}{10})$. This implies $v\equiv 0$ over $B(1)$, which means $u$ is a constant. 
\end{proof}

\section{Proof of Theorem \ref{thm Liouville}. \label{section Proof of Theorem liouville}}
 Consider $tr_{\omega_{\beta}}\omega$.  
Given the $\omega$ and $\phi$ as in Theorem \ref{thm Liouville}, we define the 3rd derivative as  
\begin{equation}\label{equ Def of 3rd derivative quantity S}S=\omega^{i\bar{j}}\omega^{s\bar{t}}\omega^{p\bar{q}}\phi_{z_{i}, \bar{z}_{t},z_{p}}\phi_{\bar{z}_{j}, z_{s},\bar{z}_{q}},
\end{equation}
as in \cite{Yau} and \cite{Brendle}.
The derivatives concerned are all covariant derivatives with respect to $\omega_{\beta}$. Nevertheless, since the connection of $\omega_{\beta}$ is holomorphic, we have \[\phi_{z_{i}, \bar{z}_{t}}=\frac{\partial^2 \phi}{\partial z_{i} \bar{\partial} \bar{z}_{t}}=\omega_{ z_{i}, \bar{z}_{t}}.\]
Thus $S$ is actually defined over the whole $C^{n}$, without assuming the existence of a global potential $\phi$. By equation (2.7) in \cite{Yau}, we have 
\begin{equation}\Delta_{\omega}tr_{\omega_{\beta}}\omega\geq \frac{S}{K}\geq 0,\ S\ \textrm{as in}\ (\ref{equ Def of 3rd derivative quantity S}).
\end{equation}
 Without loss of generality, we may assume that
\[
{1\over {C_0}} \leq tr_{\omega_{\beta}}\omega \leq  C_0
\]and
\[
\sup_{\C\times \C^{n-1}}\; tr_{\omega_{\beta}}\omega =  C_0.
\]

Since $tr_{\omega_{\beta}}\omega$ is subharmoic, if this sup is achieved in some finite ball, then the strong maximal principle (Theorem  \ref{thm strong maximal principle}) implies that
\[
tr_{\omega_{\beta}}\omega =  constant.
\]
Going back to (36), we see that $\omega$ is covariant constant with respect to $\omega_{\beta}$. This easily implies that $\omega$ is isometric to $\omega_{\beta}$ by a complex linear transformation.

  Unfortunately,  a bounded function will usually not achieve maximum at an interior point.  Suppose
 \[
\sup_{\C\times \C^{n-1}} \;  tr_{\omega_{\beta}}\omega = C_0.
 \]     
Suppose there exists a sequence of points $p_i$ such that 
\[
  tr_{\omega_{\beta}}\omega(p_{i}) \rightarrow C_0,\  dist(p_{i},0)\rightarrow \infty,\qquad {\rm as}  \;i \rightarrow \infty.
\]
Consider the rescaled sequence $(\C\times \C^{n-1}, 0, \omega_i = R_i^{-2} \omega).\;$ It converges locally smoothly to 
 $(\C\times \C^{n-1}, o, \omega_\infty).\;$   Denote
 \[
   v_i = tr_{\omega_{\beta}}\omega_{i}  \in [{1\over C_0},C_0]\qquad {\rm in }\; \C\times \C^{n-1}.
 \]
 Then 
 \[
 v_i(p_{i}) \rightarrow C_{0},\qquad {\rm as} \; \; i \rightarrow \infty.
 \]
 It is easy to see that $(p_{i}, v_i)$  converges to $(p_{\infty}, v_\infty)$
locally smooth away from divisor such that
\[
v_\infty  = tr_{\omega_{\beta}} \;\omega_\infty \in [{1\over C_0},C_0] \qquad {\rm in }\; \C\times \C^{n-1},\]
and
\begin{equation}\label{equ Yau's bochner technique after taking limit}
\triangle_{\omega_\infty} v_\infty =  S_{\infty} \geq 0,\ dist_{\beta}(p_{\infty},0)=1.
\end{equation}
If $|v|_{\alpha,\beta}\leq C$ before taking limit, then $v_{\infty}$ achieves interior maximum at $p_{\infty}$,  from Theorem \ref{thm strong maximal principle} we obtain $v_\infty$ is a constant. By (\ref{equ Yau's bochner technique after taking limit}) we deduce $S_{\infty}=0$. Therefore,  
\[
\omega_\infty = L^{\star}\omega_{\beta},
\]
for some linear transformation $L$.
Then, by Theorem \ref{thm Tangent cone trivial implies omega trivial}, we know that  $\omega=L^{\star}\omega_{\beta}$.

So the difficulty is to show $v_{\infty}$ is a constant even it might not be continous apriorily.   Fortunately, $v_{\infty}$ is apprximated by the 
sequence $v_{i}$. It is here
we apply  harmonic lifting  before letting $i\rightarrow \infty$.

In the singular polar coordinates, we consider the ball centered at $0$ and with radius $2$.  By Theorem \ref{thm Dirichlet problem for C2alpha boundary value}, we can find a $\omega_{i}$-harmonic function $h_{i}$ such that 
\[
\triangle_{\omega_i} h_{i} = 0, \qquad {\rm in}\; B_2(o)
\]
and 
\[
h_{i} = v_i \qquad {\rm at }\; \partial B_2(o).
\]
Using weak maximal principle,  we have  $h_{i} \geq v_i\geq 0$. 
Moreover, since $v_i$ is bounded above by $C_{0}$ in the boundary, It follows by maximum principle again that $h_i \leq C_0$ in $B_2.\;$
Thus $0\leq h_{i}\leq C_{0}$. 
It follows that
\[
 h_{i}(p_{i}) \rightarrow C_{0},\qquad {\rm as} \; \; i \rightarrow \infty.
\]
 By Lemma \ref{lem Linfty solution is actually a weak solution}, we know that $h_{i}$ is uniformly $C^{\alpha, \beta}$ in the interior
and continuous up to all smooth points on $\partial B_2(o) \setminus D.\;$ 

 Now we take limit as $i\rightarrow \infty$, and denote the limit of $h_{i}$ as
$h_\infty.\;$   The convergence is locally smooth away from divisor, uniformly $C^{\alpha,\beta}$ across the divisor. Thus, we have
\[
\triangle_{\omega_\infty} h_\infty = 0,  \qquad {\rm in}\; B_2(o, \omega_{\beta})
\]
and 
\[
    \; h_\infty(p_{\infty}) = C_0.
\]
Applying strong maximal principle theorem (Theorem 3.3), we have
\[
h_\infty \equiv C_{0} \qquad {\rm  in}\;\; B_2(o).
\]
Moreover, $v_{i}\rightarrow v_{\infty}$ smoothly away from $D$ and consequently $h_{\infty}|_{\partial B_{2}}=v_{\infty}|_{\partial B_{2}}$.

It follows that, on  $\partial B_2(o)\setminus D$, we have 
\[
v_\infty  = h_\infty \equiv C_0.
\]
Then $v_{\infty}$ attains maximum over $\partial B_2(o)\setminus D$!
Using the subharmoncity in (\ref{equ Yau's bochner technique after taking limit}) and  strong maximal principle again, we deduce $v_{\infty}$ is a constant and consequently
\[
 S_{\infty} \equiv 0.
\]
Hence $ \omega_\infty=L^{\star}\omega_{\beta}$. Since $\omega_{\infty}$ is a tangent cone of $\omega$, using  Theorem \ref{thm Tangent cone trivial implies omega trivial}, we conclude   $\omega=L^{\star}\omega_{\beta}$. 

The proof of Theorem \ref{thm Liouville} is complete.

\section{Bootstrapping of the conical K\"ahler-Ricci flow.\label{section Bootstrapping of  CKRF}}
In this section we show the bootstrapping of conical K\"ahler-Ricci flow
is true. This is important when we show the convergence of the rescaled
sequence in the proof of Theorem \ref{long time existence of CKRF over Riemann surface}.  
\begin{thm}\label{Bootstrapping of Holder metrics}Suppose $\alpha>0$  and $\phi$ is a $C^{2+\alpha,1+\frac{\alpha}{2},\beta}$ solution to the conical K\"ahler-Ricci flow over $[0,t_0]$, then $\phi\in C^{2+\acute{\alpha},1+\frac{\acute{\alpha}}{2},\beta} $ for all $\acute{\alpha}<\min\{\frac{1}{\beta}-1,1\}$ when $t>0$. Moreover there exists a constant $C(|\phi|_{2+\alpha,1+\frac{\alpha}{2},\beta,M\times [0,t_0]})$ (depending on $|\phi|_{2+\alpha,1+\frac{\alpha}{2},\beta}$, $\acute{\alpha}$,  $g_0$, and the data in Definition \ref{Convention on the constant} ) such that \[|\phi|^{(\star)}_{2+\acute{\alpha},1+\frac{\acute{\alpha}}{2},\beta,M\times [0,t_0]}\leq C(|\phi|_{2+\alpha,1+\frac{\alpha}{2},\beta,M\times [0,t_0]}).\]
\end{thm}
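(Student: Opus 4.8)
The plan is to run a parabolic bootstrap built on the conical Schauder theory of \cite{CYW}, using the scalar equation (\ref{ckrf potential equation}) together with the uniform two-sided bound of Proposition \ref{C^2 estimate}; the only point that differs from the smooth case is bookkeeping of which coordinate directions are ``good'' in the conical sense. Away from $D$, the flow is a non-degenerate parabolic complex Monge--Amp\`ere equation with smooth coefficients, so standard interior parabolic Schauder estimates and the usual bootstrap give $\phi\in C^\infty$ for $t>0$, with estimates controlled by the parabolic distance to $(M\times\{0\})\cup D$; hence the whole content is a uniform conical estimate in a fixed tubular neighbourhood $T_{R_0}(D)$. There we pass to the singular coordinates $\xi,w_2,\dots,w_n$ (with $z=|\xi|^{\frac1\beta-1}\xi$), in which $C^{\alpha,\beta}$ is ordinary $C^\alpha$ in $\xi$, and where, by Proposition \ref{C^2 estimate}, the linearised operator $\partial_t-\Delta_{\omega_\phi}$ is uniformly parabolic in the conical sense with $C^{\alpha,\beta}$ coefficients because $\phi\in C^{2,\alpha,\beta}$.

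The core step is to differentiate (\ref{ckrf potential equation}) only in the directions along which one stays in the conical scale: the time variable $t$ and the tangential holomorphic variables $w_j,\bar w_j$. Differentiating in $t$ reproduces (\ref{parabolic equation of the Ricci potential}), $(\partial_t-\Delta_{\omega_\phi})\dot\phi=\beta\dot\phi$; differentiating in $w_j$ gives
\[(\partial_t-\Delta_{\omega_\phi})\,\partial_{w_j}\phi=\beta\,\partial_{w_j}\phi+\partial_{w_j}h_{\omega_D}-\partial_{w_j}\log\tfrac{\omega_D^n}{\omega_\beta^n},\]
and similarly for $\bar w_j$, for $\partial_{w_j}\partial_{\bar w_k}\phi$, and for the mixed derivatives $\mathfrak{a}\,\partial_{\bar w_j}\phi$. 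At this stage every right-hand side lies in the conical H\"older class already known, and the fixed background quantities $h_{\omega_D}$ and $\log(\omega_D^n/\omega_\beta^n)$ are in fact $C^{\acute\alpha,\beta}$ for every $\acute\alpha<\min\{\tfrac1\beta-1,1\}$ (the ``routine calculation'' below (\ref{ckrf potential equation}); this is precisely why that number is the ceiling, via the $|\xi|^{2(1-\beta)}$-type coefficients of $\omega_0$). Applying the interior conical parabolic Schauder estimate of \cite{CYW} to these differentiated equations, with a cutoff in the time variable, upgrades $\dot\phi$, $\partial_{w_j}\phi$ and the tangential/mixed second derivatives by one conical derivative (at the cost of the $t$-weight that is built into the $(\star)$-seminorm). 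Iterating finitely many times brings all these quantities to $C^{\acute\alpha,\beta}$ in the appropriate parabolic scale, with constant depending only on $|\phi|_{2+\alpha,1+\frac\alpha2,\beta,M\times[0,t_0]}$, $g_0$, $\acute\alpha$ and the data of Definition \ref{Convention on the constant}.

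What remains is the single ``bad'' second derivative, the conical one $\phi_{\xi\bar\xi}$, which cannot be reached by differentiating (\ref{ckrf potential equation}) in $\xi$. Here one uses the Monge--Amp\`ere equation algebraically, exactly as in \cite{CDS2}: exponentiating (\ref{ckrf potential equation}) gives $\det(g_{i\bar j})=e^{\dot\phi-\beta\phi-h_{\omega_D}}\det((\omega_D)_{i\bar j})$, and expanding the determinant along the first row and column isolates the conical entry $g_{1\bar 1}$ (which, modulo a known background term, is $\phi_{\xi\bar\xi}$) multiplied by its cofactor; by Proposition \ref{C^2 estimate} the cofactor is bounded below, while the remaining entries and $e^{\dot\phi-\beta\phi-h_{\omega_D}}$ now lie in the improved H\"older class, so $\phi_{\xi\bar\xi}$ inherits it. Combined with the previous step this gives $\phi\in C^{2+\acute\alpha,1+\frac{\acute\alpha}{2},\beta}$ for $t>0$. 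To promote this time-weighted interior bound to the asserted $(\star)$-estimate on $M\times[0,t_0]$, one reruns the estimate after a parabolic rescaling centred at an arbitrary point $(x,t)$ with $t>0$, noting the rescaled problem has the same structural constants; this is the standard device that makes the final constant independent of $1/t$.

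I expect the main obstacle to be essentially technical but delicate: verifying that at every iteration step the conical component $\phi_{\xi\bar\xi}$ really can be recovered algebraically from the tangential and mixed components plus the right-hand side of (\ref{ckrf potential equation}), so that one never has to Schauder-estimate directly in the degenerate conical direction, and checking that the conical parabolic Schauder machinery of \cite{CYW} applies to the differentiated equations \emph{with the correct time-weights}. A secondary subtlety is that the jump from $\alpha$ to all $\acute\alpha<\min\{\tfrac1\beta-1,1\}$ is a finite iteration whose length and constants must be tracked carefully, each step being capped by the optimal H\"older regularity $\min\{\tfrac1\beta-1,1\}$ of the fixed model data $\omega_D$ and $h_{\omega_D}$.
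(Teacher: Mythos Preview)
Your proposal is correct and follows the same architecture as the paper's proof: differentiate (\ref{ckrf potential equation}) only in the tangential directions, apply the conical parabolic Schauder estimates of \cite{CYW} to the resulting linear equations, and then recover the remaining ``bad'' conical second derivative $\phi_{\mathfrak a\bar{\mathfrak a}}$ algebraically from the Monge--Amp\`ere equation, exactly as you describe.

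The one tactical difference worth noting is how the jump from $\alpha$ to an arbitrary $\acute\alpha<\min\{\tfrac1\beta-1,1\}$ is made. You propose a finite iteration of the bootstrap, each round improving the H\"older exponent and then feeding the improved coefficients back into Schauder. The paper instead does it in a single pass: one application of Schauder to the tangentially differentiated equation gives $\phi_{u_i}\in C^{2+\alpha,1+\frac\alpha2,\beta}$ (a full extra derivative, not just a better exponent), and then an interpolation inequality (Lemma 11.3 of \cite{CYW}) trades that extra derivative for the full range of H\"older exponents, yielding $\phi_{u_i}\in C^{1,\acute\alpha,\beta}$ for every $\acute\alpha<\min\{\tfrac1\beta-1,1\}$ at once. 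This immediately controls the mixed and tangential second derivatives in $C^{\acute\alpha,\beta}$, after which the algebraic step finishes. Your iteration would also work, but you would have to be careful that at each stage the coefficients of $\Delta_{\omega_\phi}$ (which involve $\phi_{\mathfrak a\bar{\mathfrak a}}$) have already been upgraded via the algebraic step before the next Schauder application; the paper's interpolation device sidesteps this bookkeeping entirely.
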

\begin{proof}{of Theorem \ref{Bootstrapping of Holder metrics}:}
Temporarily we denote $|\phi|_{2+\alpha,1+\frac{\alpha}{2},\beta,M\times [0,t_0]}=\underline{k}$. Let  $u_i$ be a tangential variable near $D$. Differentiating
the CKRF (\ref{ckrf potential equation}) with respect to $u_i$ we get  
\begin{equation}\frac{\partial \phi_{u_i}}{\partial t}=\Delta_{\phi}\phi_{u_i}+\beta \phi_{u_i}+\widehat{h}\ \textrm{over}\ B_{0}(r_0),
\end{equation}
where $\widehat{h}$ is a $C^{\alpha,\frac{\alpha}{2},\beta}$ function and $r_0$ is sufficient small such that a coordinate exists in $B_{0}(r_0)$ .
Then exactly as in the proof of Theorem 1.13 in \cite{WYQWF}, by applying  the interior parabolic Schauder estimate in the equation (21) in \cite{CYW}, we obtain 
\begin{equation}\label{Bounding C 2 alpha norm of the tangential derivatives}
|\phi_{u_i}|^{(\star)}_{2+\alpha,1+\frac{\alpha}{2},\beta,B_{0}(r_0)\times [0,t_0]}\leq C\times (1+|\phi_{u_i}|_{0,B_{0}(r_0)\times [0,t_0]})= C(\underline{k}).
\end{equation}
 First we bound the spatial $C^{2,\alpha,\beta}$ norm when $t>0$. Using the intepolation inequalities in Lemma 11.3 in \cite{CYW} for $\phi_{u_i}$, we end up with 
 \begin{equation}
 |\phi_{u_i}|^{(\star)}_{1,\alpha^{\prime},\beta,B_{0}(r_0)\times [0,t_0]}\leq C\ \textrm{for any}\ \acute{\alpha}<\min\{\frac{1}{\beta}-1,1\}.
 \end{equation}
Hence the mixed derivatives and tangential second order derivatives  satisfy
   \begin{equation}\label{Bounding mixed and tangential derivatives I}
  |\phi_{\bar{\mathfrak{a}}u_i}|_{\alpha^{\prime},\beta,B_{0}(\frac{r_0}{2})\times \{t\}}\leq \frac{C}{t^{\frac{1+\alpha^{\prime}}{2}}},
   \end{equation}
   \begin{equation}\label{Bounding mixed and tangential derivatives II}
  |\phi_{u_i\bar{u_j}}|_{\alpha^{\prime},\beta,B_{0}(\frac{r_0}{2})\times \{t\}}\leq \frac{C}{t^{\frac{1+\alpha^{\prime}}{2}}}.
   \end{equation}
   Similarly we have 
   \begin{equation}\label{Bounding mixed and tangential derivatives III}
  |\phi_{\mathfrak{a}\bar{u_i}}|_{\alpha^{\prime},\beta,B_{0}(\frac{r_0}{2})\times \{t\}}\leq \frac{C}{t^{\frac{1+\alpha^{\prime}}{2}}}.
   \end{equation}
   Thus to prove the bootstrapping estimate for $i\partial \bar{\partial} \phi$, it suffices to prove it for $\phi_{\mathfrak{a}\bar{\mathfrak{a}}}$. The key thing is that the CKRF equation (\ref{ckrf potential equation})  directly implies the bound for $\phi_{\mathfrak{a}\bar{\mathfrak{a}}}$.  Without loss of generality we assume $n=2$. Then the CKRF equation reads as 
   \begin{eqnarray*}
   & &(\omega_{D,\mathfrak{a}\bar{\mathfrak{a}}}+\phi_{\mathfrak{a}\bar{\mathfrak{a}}})(\omega_{D,u\bar{u}}+\phi_{u\bar{u}})
   -(\omega_{D,\mathfrak{a}\bar{u}}+\phi_{\mathfrak{a}\bar{u}})(\omega_{D,u\bar{\mathfrak{a}}}+\phi_{u\bar{\mathfrak{a}}})
   \\&=&e^{h-\beta\phi+\frac{\partial \phi}{\partial t}}\omega^2_D,
   \end{eqnarray*}
   where $\phi_{\mathfrak{a}\bar{\mathfrak{a}}}=(\frac{\partial^2}{\partial r^2}+\frac{1}{r}\frac{\partial}{\partial r}+\frac{1}{\beta^2r^2}\frac{\partial^2}{\partial \theta^2})\phi$. 
   Then we obtain  \begin{eqnarray}\label{Equation for the normal 1-1 derivative of the potential}
   \phi_{\mathfrak{a}\bar{\mathfrak{a}}}=\frac{e^{h-\beta\phi+\frac{\partial \phi}{\partial t}}\omega^2_D+(\omega_{D,\mathfrak{a}\bar{u}}+\phi_{\mathfrak{a}\bar{u}})(\omega_{D,u\bar{\mathfrak{a}}}+\phi_{u\bar{\mathfrak{a}}})}{\omega_{D,u\bar{u}}+\phi_{u\bar{u}}}-\omega_{D,\mathfrak{a}\bar{\mathfrak{a}}}.
   \end{eqnarray}
   
   By Theorem 1.13 in \cite{WYQWF} and intepolation,  we deduce 
   \begin{equation}\label{Bounding Calphaprime norm of the Ricci potential}
  | \frac{\partial \phi}{\partial t}|^{(\star)}_{\alpha^{\prime},\beta,M\times [0,t_0]}\leq C.
   \end{equation}
   Then by (\ref{Bounding mixed and tangential derivatives I}), (\ref{Bounding mixed and tangential derivatives II}),(\ref{Bounding mixed and tangential derivatives III}), (\ref{Equation for the normal 1-1 derivative of the potential}), and (\ref{Bounding Calphaprime norm of the Ricci potential}), we conclude 
     \begin{equation}\label{Bounding the C alpha norm of normal 1,1 derivatives}
  |\phi_{\mathfrak{a}\bar{\mathfrak{a}}}|_{\alpha^{\prime},\beta,B_{0}(\frac{r_0}{2})\times \{t\}}\leq \frac{C}{t^{\frac{1+\alpha^{\prime}}{2}}}.
   \end{equation}
   The estimates for the time derivatives and timewise H\"older norms are similar. To be simple, using (\ref{Bounding C 2 alpha norm of the tangential derivatives}) and timewise intepolation, we can get similar estimate as  follows 
    \begin{eqnarray}\label{Bounding time holder norm of mixed and tangential derivatives I}
 \nonumber & &|\phi_{\bar{\mathfrak{a}}u_i}|_{0,\frac{\alpha^{\prime}}{2},\beta,B_{0}(\frac{r_0}{2})\times [t,t_0]}+|\phi_{\bar{\mathfrak{u}_i}\mathfrak{a}}|_{0,\frac{\alpha^{\prime}}{2},\beta,B_{0}(\frac{r_0}{2})\times [t,t_0]}+|\phi_{u_i\bar{u_j}}|_{0,\frac{\alpha^{\prime}}{2},\beta,B_{0}(\frac{r_0}{2})\times [t,t_0]}
  \\&\leq& \frac{C}{t^{\frac{1+\alpha^{\prime}}{2}}}.
   \end{eqnarray}
   Thus, using (\ref{Equation for the normal 1-1 derivative of the potential}) we can bound $|\phi_{\mathfrak{a}\bar{\mathfrak{a}}}|_{0,\frac{\alpha^{\prime}}{2},\beta,B_{0}(r_0)\times [t,t_0]}$ exactly as how we get (\ref{Bounding the C alpha norm of normal 1,1 derivatives}). 
   \\
   
   The proof is complete. Actually what we proved is with better weight than what's stated in Theorem \ref{Bootstrapping of Holder metrics}. 
\end{proof}

In particular, with respect to the bootstrapping of conical K\"ahler-Einstein metrics, we've recovered a result of Chen-Donaldson-Sun  in \cite{CDS2}.
\begin{thm}( Chen-Donaldson-Sun): Suppose $\phi$ is a conical K\"ahler -Einstein metric and $\phi\in C^{2,\alpha,\beta}$ for some $\alpha>0$. Then $\phi\in C^{2+\acute{\alpha},1+\frac{\acute{\alpha}}{2},\beta} $ for all $\acute{\alpha}<\min\{\frac{1}{\beta}-1,1\}$ and 
\[|\phi|_{2,\acute{\alpha},\beta,M}\leq C(|\phi|_{2,\alpha,\beta,M}).\]
\end{thm}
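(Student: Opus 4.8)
The plan is to recognise a conical K\"ahler--Einstein metric as a time-independent (up to an explicit additive constant) solution of the conical K\"ahler--Ricci flow, and then to quote Theorem \ref{Bootstrapping of Holder metrics}. Writing the conical KE equation in potential form relative to $\omega_{D}$ gives
\[
\log\frac{(\omega_{D}+\sqrt{-1}\partial\bar{\partial}\phi)^n}{\omega_{D}^n}+\beta\phi+h_{\omega_{D}}=c_{0}
\]
for a constant $c_{0}$. Set $\widetilde{\phi}(t)=\phi+\frac{c_{0}}{\beta}(e^{\beta t}-1)$; a direct computation shows $\widetilde{\phi}$ solves the CKRF potential equation (\ref{ckrf potential equation}) on $M\times[0,t_{0}]$ with $\widetilde{\phi}(0)=\phi$. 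Since the term $\frac{c_{0}}{\beta}(e^{\beta t}-1)$ is spatially constant and real-analytic in $t$, it contributes nothing to any spatial H\"older seminorm, so $\widetilde{\phi}\in C^{2+\alpha,1+\frac{\alpha}{2},\beta}(M\times[0,t_{0}])$ with $|\widetilde{\phi}|_{2+\alpha,1+\frac{\alpha}{2},\beta,M\times[0,t_{0}]}\le C(|\phi|_{2,\alpha,\beta,M})$ (the normalising constant $c_{0}$, hence $|\widetilde{\phi}|_{0}$, being controlled by $|\phi|_{2,\alpha,\beta,M}$ together with the quasi-isometry constant of $\omega_\phi$).

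Next I would apply Theorem \ref{Bootstrapping of Holder metrics} to $\widetilde{\phi}$: for every $\acute{\alpha}<\min\{\frac{1}{\beta}-1,1\}$ it yields the weighted parabolic bound
\[
|\widetilde{\phi}|^{(\star)}_{2+\acute{\alpha},1+\frac{\acute{\alpha}}{2},\beta,M\times[0,t_{0}]}\le C\big(|\widetilde{\phi}|_{2+\alpha,1+\frac{\alpha}{2},\beta,M\times[0,t_{0}]}\big)\le C\big(|\phi|_{2,\alpha,\beta,M}\big).
\]
Evaluating this estimate at the fixed interior time $t=t_{0}/2$, and using that $\widetilde{\phi}(t_{0}/2)$ and $\phi$ differ only by the explicit constant $\frac{c_{0}}{\beta}(e^{\beta t_{0}/2}-1)$ (so that they have identical spatial derivatives and $C^{0}$-norms differing by a controlled amount), we conclude $\phi\in C^{2,\acute{\alpha},\beta}(M)$ with $|\phi|_{2,\acute{\alpha},\beta,M}\le C(|\phi|_{2,\alpha,\beta,M})$, which is the claim.

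The substance of the argument is the elliptic-into-parabolic embedding just described; the only points that require attention are bookkeeping ones: bounding $c_{0}$ and $|\widetilde{\phi}|_{0}$ in terms of $|\phi|_{2,\alpha,\beta,M}$, and checking that the $(\star)$-weight in Theorem \ref{Bootstrapping of Holder metrics}, which allows blow-up like $t^{-(1+\acute{\alpha})/2}$ as $t\to 0$, causes no loss once the estimate is read off at a fixed positive time. If one prefers to bypass the flow entirely, the same conclusion follows by running the proof of Theorem \ref{Bootstrapping of Holder metrics} in the elliptic category: differentiate the KE Monge--Amp\`ere equation in the tangential variables $u_{i}$, apply the conic interior Schauder estimate of \cite{Don} (or \cite{CYW}) to bound $\phi_{u_{i}}$ in $C^{2,\alpha,\beta}$ and hence the mixed and tangential second derivatives $\phi_{\mathfrak{a}\bar{u}_{i}}$, $\phi_{u_{i}\bar{u}_{j}}$, and finally recover $\phi_{\mathfrak{a}\bar{\mathfrak{a}}}$ algebraically from the equation itself exactly as in (\ref{Equation for the normal 1-1 derivative of the potential}). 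This is simply the stationary shadow of the parabolic bootstrap, and the expected (mild) obstacle is the same in both approaches: arranging that all dependence collapses onto the elliptic norm $|\phi|_{2,\alpha,\beta,M}$.
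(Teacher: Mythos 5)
Your proposal is correct and is essentially the same argument the paper intends: the theorem is stated as an immediate consequence ("In particular\ldots") of Theorem~\ref{Bootstrapping of Holder metrics}, obtained by observing that a conical K\"ahler--Einstein metric yields a stationary (up to the explicit additive term $\frac{c_0}{\beta}(e^{\beta t}-1)$) solution of the CKRF potential equation and reading off the parabolic bootstrap at a fixed positive time. You have spelled out the bookkeeping the paper leaves implicit (verifying $\widetilde\phi$ solves~(\ref{ckrf potential equation}), controlling $c_0$ and $|\widetilde\phi|_0$, and noting the $(\star)$-weight is harmless at $t=t_0/2$), and your optional purely elliptic rerun of the bootstrap is a valid stationary shadow of the same argument.
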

\section{Exponential convergence when $C_{1,\beta}<0$ or $=0$.\label{section Exponential Convergence when C_1 nonpositive} }
In this section, we prove Theorem \ref{Convergence to KE metric when C1 nonpositive} on the convergence of CKRF. We follow the proof  of Cao \cite{Cao} and employ some modifications which are necessary in the conical case at this point. \\

We  point out a convention of notations in this section: The $C$'s in this section are all time independent constants, the other dependence of the $C's$ in this section is as Definition \ref{Convention on the constant}. 
\begin{proof}{of Theorem \ref{Convergence to KE metric when C1 nonpositive}:} We only prove the case when   $C_{1,\beta}=0$, since the case when $C_{1,\beta}<0$ is much much easier and doesn't require any other machinery except maximal principle of the heat equation and Theorem 1.8 in \cite{CYW}. \\

By Theorem Theorem 1.13 in \cite{WYQWF}, we know $Ric$ and $\sqrt{-1}\partial \bar{\partial} \frac{\partial \phi}{\partial t}$ are   $C^{\alpha,\beta}$ (1,1)-forms. Moreover, the scalar curvature $s_{\phi}$ and $\nabla \frac{\partial \phi}{\partial t}$ are all in $C^{\alpha,\beta}$. Then, using  regularity of lower order items establised in \cite{CYW},   the identities in the following proof  are all well defined. 

 In the Calabi-Yau case,  there is a smooth function $h_{\omega_0}$ such that 
\begin{equation}\label{Background equation when C1beta = 0}
Ric_{\omega_0}=i\partial \bar{\partial}\{h_{\omega_0}-(1-\beta)\log h\},\  
Ric_{\omega_0}-2\pi(1-\beta)[D]=i\partial \bar{\partial}H_{\beta},
\end{equation}
where 
\begin{equation*}
H_{\beta}=h_{\omega_0}
-(1-\beta)\log|S|^2
\end{equation*}
and $h$ is the metric of the line bundle $L_{D}$. The potential equation of the Calabi-Yau CKRF reads as 
\begin{equation}\label{potential equation of the Calabi-Yau CKRF}
(\omega_{D}+i\partial \bar{\partial}\phi)^n=e^{-h_{\omega_D}+\frac{\partial \phi}{\partial t}}\omega^n_{D}.
\end{equation}

Step 1. The most important thing is to obtain a time-independent bound for $osc\phi$. This is achieved similarly as in  \cite{Cao}, the difference is that we apply the Poincare inequality here, while in \cite{Cao} the lower bound on the Green function is applied. Notice  $\frac{\partial \phi}{\partial t}$ satisfies
\begin{equation}\label{Calabi-Yau: Evolution of dphidt}
\frac{\partial}{\partial t}\frac{\partial \phi}{\partial t}=\Delta_{\phi}\frac{\partial \phi}{\partial t}.
\end{equation}
By maximal principle we obtain 
\begin{equation}\label{Bound on the dphidt in the Calabi-Yau case}
|\frac{\partial \phi}{\partial t}|_{0,[0,\infty)}\leq C.
\end{equation}
The from the Calabi-Yau CKRF equation we get 
\begin{equation}
(\omega_{D}+i\partial \bar{\partial}\phi)^n=e^{F(t)}\omega^n_{D}\ \textrm{for}\ |F(t)|_{0,[0,\infty)}\leq C.
\end{equation}
Hence, by considering $(\omega_{D}+i\partial \bar{\partial}\phi)^n-\omega_{D}^n$, we compute
\begin{equation}\label{Difference of the volume forms}
i\partial \bar{\partial}\phi\wedge (\omega_{\phi}^{n-1}+.....+\omega_{D}^{n-1})=[e^{F(t)}-1]\omega^n_{D}.
\end{equation}
Now we take $\phi_0=\phi-\underline{\phi}$ so that 
the average of $\phi_0$ with respect to $\omega_{D}$ is $0$. Then we multiply (\ref{Difference of the volume forms}) by $\phi_0$ and integrate over $M$ we get
\begin{equation}
-\int_{M}\partial \phi_0\wedge \bar{\partial}\phi_0\wedge (\omega_{\phi}^{n-1}+.....+\omega_{D}^{n-1})=\int_{M}\phi_{0}[e^{F(t)}-1]\omega^n_{D}\leq C.
\end{equation}
 Notice that every form in the parenthesis on the left hand side is positive, we obtain 
\begin{equation}
\int_{M}|\nabla_{\omega_{D}}\phi_0|^2 \omega_{D}^{n}=n\int_{M}
\partial \phi_{0}\wedge \bar{\partial}\phi_{0}\wedge \omega_{D}^{n-1}\leq C\int_{M}|\phi_0|\omega^n_{D}.
\end{equation}
By the Poincare inequality for $\omega_{D}$ (stated in Remark 4.4 in \cite{WYQWF}), and the assumption  
$\frac{1}{Vol(M)}\int_{M}\phi_0\omega_{D}=0$, we obtain 
\begin{equation}
\int_{M}\phi_0^2\omega_{D}^{n}\leq C\int_{M}|\nabla_{\omega_{D}}\phi_0|^2 \omega_{D}^{n}\leq C\int_{M}|\phi_{0}|\omega^n_{D}\leq C+\frac{1}{100}\int_{M}\phi_0^2\omega_{D}^{n}.
\end{equation}
Therefore we obtain 
\begin{equation}\label{L^2 bound on the potential phi ind of time}
\int_{M}\phi_{0}^2\omega_{D}^{n}\leq C,
\end{equation}
which is the necessary $L^2$-bound in the Moser iteration scheme. \\

 Let
\[\phi_{0,+}=\max\{\phi_0,0\},\ \phi_{0,-}=-\min\{\phi_0,0\}.\]
Notice that both $\phi_{0,+}$ and $\phi_{0,-}$ are nonnegative. Lemma 7.6 of \cite{GT} and the existence of singular coordinate near $D$  immediately implies both $\phi_{0,+}$ and $\phi_{0,-}$ are Lipshitz functions with respect to $\omega_{D}$. Thus for any $p>1$, we can also multiply equation (\ref{Difference of the volume forms}) by $\phi^p_{0,+}$ ($\phi^p_{0,-}$) and apply Lemma 7.6 of \cite{GT} to get 
\begin{equation}
-p\int_{M}\phi^{p-1}_{0,+} \partial \phi_{0,+} \wedge \bar{\partial}\phi_{0,+} \wedge (\omega_{\phi}^{n-1}+.....+\omega_{D}^{n-1})=\int_{M}\phi^{p}_{0,+} [e^{F(t)}-1]\omega^n_{D}.
\end{equation}
Thus we obtain 
\begin{equation}
\int_{M} |\nabla_{\omega_{D}} \phi_{0,+}^{\frac{p+1}{2}}|^2\omega_{D}^{n}
\leq \frac{C(p+1)^2}{4p}\int_{M}\phi^{p}_{0,+}\omega^n_{D}.
\end{equation}
By the Sobolev constant bound (see Remark 4.4 in \cite{WYQWF}) and (\ref{L^2 bound on the potential phi ind of time}), the Moser's iteration as in \cite{Cao} works and we  obtain the time-independent bound on $\phi_{0,+}$:
\begin{equation}
|\phi_{0,+}|_{0,[0,\infty)}\leq C.
\end{equation}
In the same way we get $|\phi_{0,-}|_{0,[0,\infty)}\leq C$. Thus finally we completed step 1 by obtaining
\begin{equation}\label{Bound on the ocsillation of phi}
osc \phi\leq C.
\end{equation}

Step 2.  By the proof of Proposition \ref{C^2 estimate}, the equation (\ref{Bound on the dphidt in the Calabi-Yau case}), and (\ref{Bound on the ocsillation of phi}),  we obtain
\begin{equation}\label{Calabi-Yau: C11 bound}
\frac{C}{\omega_{D}}\leq \omega_{\phi}\leq C\omega_{D}.
\end{equation}
Therefore by the last part of the proof of Theorem \ref{Evans Krylov} (on the norm dependence, section \ref{section of Holder estimate for the second derivative}), and equation (\ref{potential equation of the Calabi-Yau CKRF}) (which does not concern any $0$th order term of $\phi$ on the right hand side), we obtain
 \begin{equation}\label{Calabi-Yau: bound on C2alpha beta norm of the potential}
|i\partial \bar{\partial}\phi|_{\alpha,\beta,[0,\infty)}\leq C.
\end{equation}
Thus the $C^{\alpha,\beta}$ norm of $\omega_{\phi}$ is bounded independent of time and  any sequence $\omega_{\phi_{t_k}}$ at least  subconverges to a limit $\omega_{CY,\infty}$. Furthermore, by (\ref{Calabi-Yau: Evolution of dphidt}), Theorem 1.18 in \cite{CYW}, and (\ref{Bound on the dphidt in the Calabi-Yau case}), we obtain
\begin{equation}\label{Calabi-Yau: Bound on the C2alpha norm of dphidt}|\frac{\partial \phi}{\partial t}|_{2,\alpha,\beta,[t,\infty)}\leq C(t),\ C(t)<\infty\ \textrm{when}\ t>0. \end{equation}

Step 3. In this step we prove the flow subconverges to a Ricci-Flat metric  to show the existence of such a critical metric. This is achieved by the K-energy in the Calabi-Yau setting. \\

We define the Calabi-Yau K-energy $M_{\omega_0,\beta}$ as 
\begin{eqnarray*}& &M_{\omega_0,\beta}=\int_{M}\log (\frac{\omega^n_{\phi}}{e^{H_{\beta}}\omega_0^n})\frac{\omega^n_{\phi}}{n!}.
\end{eqnarray*}
Routine computation shows that 
\begin{eqnarray}\label{K-energy's derivative}\nonumber & &\frac{dM_{\omega_0,\beta}}{dt}
\nonumber \\&=&-\frac{1}{n!}\{\int_{M}\frac{\partial \phi}{\partial t}s_{\phi}\omega^n_{\phi}-2n\pi(1-\beta)\int_{D}\frac{\partial \phi}{\partial t}\omega^{n-1}_{\phi}\}
\nonumber \\&=&-\frac{1}{(n!)}\int_{M\setminus D}s_{\phi}\frac{\partial \phi}{\partial t}\omega^n_{\phi},
\end{eqnarray}
where $s_{\phi}$ is the scalar curvature of $\omega_{\phi}$.\\

Along the Calabi-Yau CKRF, we have 
\begin{equation}\label{Scalar curvature potential of Calabi-Yau CKRF}
\Delta_{\phi}\frac{\partial \phi}{\partial t}=-s_{\phi} \ \textrm{over}\ M\setminus D.
\end{equation}
Then (\ref{K-energy's derivative}) and (\ref{Scalar curvature potential of Calabi-Yau CKRF}) tell us 
\begin{equation}\label{Calabi-Yau: Decreasing of K-energy along CKRF}
\frac{dM_{\omega_0,\beta}}{dt}
=-\frac{1}{(n!)}\int_{M}|\nabla\frac{\partial \phi}{\partial t}|^2\omega^n_{\phi}\leq 0.
\end{equation}

By (\ref{Calabi-Yau: C11 bound}), (\ref{Calabi-Yau: bound on C2alpha beta norm of the potential}), and (\ref{Bound on the ocsillation of phi}), we see 
\begin{equation}\label{Calabi-Yau: Uniform bound on the K-energy along the flow}
|M_{\omega_0,\beta}(\omega_{\phi})|\leq C\ \textrm{over}\ [0,\infty).
\end{equation}
Since $\frac{dM_{\omega_0,\beta}}{dt}\leq 0$, then there exists a sequence $t_k\rightarrow \infty$ such that 
\begin{equation}\label{EC: derivative of K-energy subconverge to  0}
|\frac{dM_{\omega_0,\beta}}{dt}|_{t_k}\rightarrow 0.
\end{equation}
(\ref{Calabi-Yau: Decreasing of K-energy along CKRF}) and (\ref{EC: derivative of K-energy subconverge to  0}) imply 
\begin{equation}\label{Calabi-Yau: dphidt goes to 0}
\int_{M}|\nabla\frac{\partial \phi}{\partial t}|^2\omega^n_{\phi_{t_k}}\rightarrow 0.
\end{equation}
By the discussion at the end of Step 2 and (\ref{Calabi-Yau: dphidt goes to 0}), $\omega_{\phi_{t_k}}$ subconverges in $C^{\alpha,\beta}$ topology to a Ricci flat metric $\omega_{KE}$. At the point, we have already shown  the existence of a Ricci-flat metric.\\

Step 4: In this step we show the flow  converges to the unique $\omega_{KE}$ (obtained in the previous step) and the convergence  is exponential,  in the sense of (\ref{Calabi-Yau: Exponential convergence of the metric in the Calpha beta 1,1 topology.}). This is also straight forward by using the Calabi-Yau K-energy. Denote 
\[v= \frac{\partial \phi}{\partial t}-\frac{1}{Vol(M)}\int_{M} \frac{\partial \phi}{\partial t}\frac{\omega^n_{\phi}}{n!}.\]
Oboviously we have 
\begin{equation}\label{Calabi-Yau: Evolution equation for v}
\frac{\partial v}{\partial t}=\Delta_{\phi}v+\frac{1}{Vol(M)}\int_{M}|\nabla v|^2\frac{\omega^n_{\phi}}{n!}.
\end{equation}

Thus $v$ has zero average with respect to $\omega^n_{\phi}$ and Poincare inequality can be applied. 
By (\ref{Calabi-Yau: Bound on the C2alpha norm of dphidt}), we have 
\begin{equation}\label{Calabi-Yau: W1,2 norm v is bounded}
\int_{M}|\nabla v|^2\omega^n_{\phi}\leq C.
\end{equation}

From (\ref{Calabi-Yau: Uniform bound on the K-energy along the flow}) and (\ref{Calabi-Yau: Decreasing of K-energy along CKRF}) on the K-energy, for any $\epsilon>0$, there is a    $T_0$ large enough $T_0$  such that
\begin{equation}\label{Calabi-Yau: bound on the parabolic L^2 norm of v, the Ricci potential}
\int_{T_0}^{\infty}\int_{M}|\nabla v|^2\omega^n_{\phi}dt<\epsilon.
\end{equation}

Then using  parabolic Moser's iteration and (\ref{Calabi-Yau: bound on the parabolic L^2 norm of v, the Ricci potential}), by letting $\epsilon$ be small enough, we deduce 
\begin{equation}\label{Calabi-Yau: v could be smaller than 1/2}
|v|_{0,[T_0+1,\infty)}\leq \frac{1}{2}.
\end{equation}
Therefore, as in \cite{Cao}, consider
\[E=\frac{1}{2}\int_{M}v^2\omega^n_{\phi}.\]
Routine computation shows that 
\begin{equation}\label{Calabi-Yau: evolution of E }
\frac{\partial E}{\partial t}=-\int_{M}(1+v)|\nabla v|^2\omega^n_{\phi}.
\end{equation}
Combining (\ref{Calabi-Yau: v could be smaller than 1/2}) and (\ref{Calabi-Yau: evolution of E }) and the Poincare inequality in Remark 4.4 of \cite{WYQWF}, we compute
\begin{equation*}
\frac{\partial E}{\partial t}\leq -\frac{1}{2}\int_{M}|\nabla v|^2\omega^n_{\phi}\leq -\frac{C_{P}}{2}\int_{M} v^2\omega^n_{\phi}=-C_{P}E,\ \textrm{over}\ [T_0+1,\infty).
\end{equation*} 
Thus we obtain the exponential decay of the Dirichlet energy $E$:
\begin{equation}\label{Calabi-Yau:exponential decay of the Dirichlet energy }
E\leq Ce^{-C_{P}t}.
\end{equation}
Hence
\begin{equation}\label{Calabi-Yau:}
\int_{t-1}^{\infty}\int_{M} v^2\omega^n_{\phi}dt =2\int_{t-1}^{\infty}Edt\leq Ce^{-C_{P}t}.
\end{equation}
Using (\ref{Calabi-Yau:exponential decay of the Dirichlet energy }), by   integrating $\frac{\partial E}{\partial t}\leq -\frac{1}{2}\int_{M}|\nabla v|^2\omega^n_{\phi}$ from $t$ to $\infty$ we also  end up with a better decay estimate than (\ref{Calabi-Yau: bound on the parabolic L^2 norm of v, the Ricci potential}):
\begin{equation}\label{Calabi-Yau: Exponential decay estimate for the parabolic L^2 norm of v, the Ricci potential}
\int_{t}^{\infty}\int_{M}|\nabla v|^2\omega^n_{\phi}dt<Ce^{-C_{P}t}.
\end{equation}
Therefore  using (\ref{Calabi-Yau: Exponential decay estimate for the parabolic L^2 norm of v, the Ricci potential}) and the Poincare inequality to perform Moser's iteration to (\ref{Calabi-Yau: Evolution equation for v}),   we get a better decay estimate than 
(\ref{Calabi-Yau: v could be smaller than 1/2}):

\begin{equation}\label{Calabi-Yau: C0 norm of v decays exponentially}
|v|_{0,[t,\infty)}\leq Ce^{-C_{P}t}.
\end{equation}

By (\ref{Calabi-Yau: Evolution equation for v}), (\ref{Calabi-Yau: W1,2 norm v is bounded}), (\ref{Calabi-Yau: C0 norm of v decays exponentially}), (\ref{Calabi-Yau: Exponential decay estimate for the parabolic L^2 norm of v, the Ricci potential}), and Theorem 1.18 in \cite{CYW}, we obtain 
\begin{equation}\label{Calabi-Yau: Decay of the C2alpha beta norm of v}
|v|_{2,\alpha,\beta,[t,\infty)}\leq Ce^{-C_{P}t}.
\end{equation}
By the arguments in the proof of Proposition 2.2 in \cite{Cao} and (\ref{Calabi-Yau: Decay of the C2alpha beta norm of v}), we see  
\begin{equation}\label{Calabi-Yau: Decay of L1 norm of phi}
\int_{M}|\phi_{0}-\phi_{KE}|\omega^n_{D}|_{t}\leq Ce^{-C_{P}t}.
\end{equation}

Here $\phi_{KE}$ is normalized such that 
\[\frac{1}{Vol(M)}\int_{M} \phi_{KE}\frac{\omega^n_{D}}{n!}=0.\]

 Next, substract log of the equation 
 \begin{equation*}
(\omega_{D}+i\partial \bar{\partial}\phi_{KE})^n=e^{-h_{\omega_D}}\omega^n_{D}
\end{equation*}
from log of (\ref{potential equation of the Calabi-Yau CKRF}), we get the following linear equation
\begin{equation}\label{Calabi-Yau: Linearized equation of phi0-phiKE}
\underline{\Delta}(\phi_{0}-\phi_{KE})= v+a_{e}.
\end{equation}
where \[\underline{\Delta}=\int_{0}^{1}g^{i\bar{j}}_{b\phi_0+(1-b)\phi_{KE}}
\frac{\partial^{2}}{\partial z_i\partial\bar{z_j} }db\]
and $a_{e}\leq Ce^{-C_{P}t}$.\\

Then, finally, by (\ref{Calabi-Yau: Linearized equation of phi0-phiKE}), (\ref{Calabi-Yau: Decay of the C2alpha beta norm of v}), (\ref{Calabi-Yau: Decay of L1 norm of phi}), Theorem 1.18 in \cite{CYW}, and the Moser's iteration, we obtain our desired estimate 
\begin{eqnarray*}
& &|\phi_{0}-\phi_{KE}|_{2,\alpha,\beta, [t,\infty)}
\\&\leq &  C(|\phi_{0}-\phi_{KE}|_{L^1(M),[t-1,\infty)}+|v+a_{e}|_{\alpha,\beta, [t-1,\infty)})
\\&\leq & Ce^{-C_{P}t},
\end{eqnarray*}
which means the metric $\omega_{\phi}$ converges to $\omega_{KE}$ in the following sense
\begin{equation}\label{Calabi-Yau: Exponential convergence of the metric in the Calpha beta 1,1 topology.}
|\omega_{\phi}-\omega_{KE}|_{\alpha,\beta, [t,\infty)}\leq Ce^{-C_{P}t}.
\end{equation}

\end{proof}
\section{Appendix A: Liouville theorem when $\beta\leq \frac{1}{2}$.}

When $\beta<\frac{1}{2}$, Calabi's 3rd derivative estimate works in the conical case (see \cite{Brendle}). Though  Theorem \ref{thm Liouville} already settles down the Liouville theorem for all $\beta\in (0,1)$, it still might be interesting to present the following extremely short proof of the Liouville theorem when $\beta\leq \frac{1}{2}$.  
 
\begin{thm}(Weak Liouville Theorem)\label{thm rigidity of singularity model when beta less than 1} Suppose $\beta\leq \frac{1}{2}$ and  $\omega$ is a $C^{\alpha,\beta}$ conical K\"ahler  metric defined over $C^{n}$. Suppose 
\begin{equation}
\omega^{n}=\omega_{\beta}^{n},\ \frac{1}{C}\omega_{\beta}\leq \omega\leq C\omega_{\beta}\ \textrm{over}\ C^{n}.
\end{equation}
Then, there is a linear transformation $L$ which preserves $D$, such that  $\omega=L^{\star}\omega_{\beta}$.
\end{thm}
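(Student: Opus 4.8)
The plan is to give a short argument, bypassing the weak--solution machinery of Sections~\ref{section Bounded weakly-subharmonic functions and weak maximum principle}--\ref{section Strong Maximum Principles and De Giorgi estimate}, by running Calabi's classical third--derivative estimate, which --- thanks to the regularity result of Brendle \cite{Brendle} --- survives unchanged in the conical category precisely when $\beta\le\frac12$. Since $\omega^{n}=\omega_{\beta}^{n}$ and $\omega_{\beta}$ is flat on $\C^{n}\setminus D$, the metric $\omega$ is conical K\"ahler--Ricci flat. Introduce the third--order quantity $S$ of (\ref{equ Def of 3rd derivative quantity S}); as noted there, since the connection of $\omega_{\beta}$ is holomorphic, $S$ is well defined on all of $\C^{n}$ without assuming a global potential. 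Brendle's estimate is the conical analogue of Calabi's interior $C^{3}$--bound: applied to the rescaled metric $R^{-2}\omega$ on $B_{0}(1)$ (which satisfies the same hypotheses with the same quasi--isometry constant), it gives, after scaling back,
\[
\sup_{B_{0}(R/2)} S \le \frac{C}{R^{2}},
\]
with $C$ depending only on $n$, $\beta$ and the quasi--isometry constant of the hypothesis.

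The first main step is then a dimensional Liouville argument: letting $R\to\infty$ forces $S\equiv0$ on $\C^{n}$. Since $\omega>0$, the vanishing $S=0$ is equivalent to $\phi_{z_{i}\bar z_{t}z_{p}}=0$ for all indices, i.e. to $\omega$ being parallel for the flat connection of $\omega_{\beta}$ on $\C^{n}\setminus D$; equivalently, the components $\omega_{i\bar t}$ are constant in the flat unitary coframe $\{\,dw,\ du_{2},\ \dots,\ du_{n}\,\}$, where $w=z^{\beta}$ and $\omega_{\beta}=\frac{\sqrt{-1}}{2}\bigl(dw\wedge d\bar w+\sum_{j=2}^{n}du_{j}\wedge d\bar u_{j}\bigr)$.

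The second main step --- where a little care is needed --- is to extract the precise shape of $\omega$. The coframe $\{dw,du_{j}\}$ is not globally single valued: the monodromy around $D$ sends $dw\mapsto e^{2\pi\sqrt{-1}\beta}\,dw$ and fixes the $du_{j}$. Writing $\omega=\sqrt{-1}\sum_{a,b}H_{a\bar b}\,\theta^{a}\wedge\bar\theta^{b}$ with $H$ a constant positive Hermitian matrix ($\theta^{1}=dw$, $\theta^{j}=du_{j}$), single--valuedness forces $H_{1\bar j}=e^{2\pi\sqrt{-1}\beta}H_{1\bar j}$, hence $H_{1\bar j}=0$ for $j\ge2$ because $\beta\in(0,1)$. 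Thus $H=\mathrm{diag}(a,H_{T})$ with $a>0$ and $H_{T}$ a positive Hermitian $(n-1)\times(n-1)$ matrix, so $\omega=a\,\frac{\sqrt{-1}}{2}dw\wedge d\bar w+\sqrt{-1}\sum_{j,k\ge2}(H_{T})_{j\bar k}\,du_{j}\wedge d\bar u_{k}$. Taking $L$ to be the linear map of $\C^{n}$ that multiplies $z$ by $a^{1/(2\beta)}$ and acts on $(u_{2},\dots,u_{n})$ by an appropriate invertible matrix $A$ (a square root of $H_{T}$) --- which manifestly preserves $D=\{z=0\}$ --- one checks directly, as in the proof of Proposition~25 of \cite{CDS2}, that $L^{\star}\omega_{\beta}=\omega$. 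Alternatively, once $S\equiv0$ one may simply invoke Theorem~\ref{thm Tangent cone trivial implies omega trivial}, since parallelism exhibits $L^{\star}\omega_{\beta}$ as the unique tangent cone of $\omega$.

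I expect the only real obstacle to be the invocation of Brendle's $C^{3}$ estimate itself, i.e. justifying Calabi's computation up to and across $D$: the argument couples a Bochner inequality for $S$ with the lower bound $\Delta_{\omega}tr_{\omega_{\beta}}\omega\ge S/K$ (the inequality recorded right after (\ref{equ Def of 3rd derivative quantity S})) and a maximum--principle estimate for a suitable combination of $S$ and $tr_{\omega_{\beta}}\omega$; controlling the boundary behaviour near $D$ is exactly where the hypothesis $\beta\le\frac12$ is used, and this is the borrowed content of \cite{Brendle}. Everything else --- the rescaling/Liouville step and the monodromy computation --- is routine.
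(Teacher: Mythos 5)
Your route coincides with the paper's own for the subcase $\beta<\tfrac12$: introduce Calabi's third-order quantity $S$, use Brendle's regularity to see that $S$ is controlled, rescale, and send $R\to\infty$ to force $S\equiv0$; parallelism then gives $\omega=L^{\star}\omega_{\beta}$. Two points merit attention. First, the paper actually splits off $\beta=\tfrac12$ and handles it by a genuinely different (and shorter) argument: lift to the branched double cover $w\mapsto w^{2}$, where $\mathfrak{T}^{\star}\omega$ is a bona fide smooth metric quasi-isometric to $\omega_{Euc}$ with $\det=1$, so the smooth Liouville theorem (Proposition 16 of \cite{CDS3}) applies directly, and invariance under the deck transformation lets it descend. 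Your decision to run the Calabi/Brendle argument uniformly across $\beta\le\tfrac12$ is plausible since Brendle's regime is $\beta\le\tfrac12$, but it should be flagged that you are relying on his estimate at the endpoint, which the authors evidently preferred not to do. Second, and more substantively, your assertion that one can \emph{bypass} the weak-solution machinery by attributing to Brendle a \emph{quantitative} scale-invariant bound $\sup_{B(R/2)}S\le CR^{-2}$ with $C=C(n,\beta,C_{0})$ is not what the paper does: Proposition 6.6 of \cite{Brendle} is cited there only for the qualitative statement $\widehat{S}\in L^{\infty}(B(2))$. The uniform constant is then earned in two steps --- an $L^{1}$ bound $\int_{B_{1}}\widehat{S}\,\widehat{\omega}^{n}\le C$ by cutting off and integrating by parts in $\Delta_{\widehat{\omega}}(\Delta_{\widehat{\omega}_{\beta}}\widehat{\phi})\ge\widehat{S}/K$, followed by Moser iteration applied to the Bochner inequality $\Delta_{\widehat{\omega}}\widehat{S}\ge0$ --- and the Moser iteration in the conical setting needs Lemma~\ref{lem Linfty solution is actually a weak solution} to know $\widehat{S}$ is an honest weak subsolution. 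So either you verify that Brendle's Proposition 6.6 is already quantitative in $(n,\beta,C_{0})$ (which the authors evidently did not want to rely on), or you keep this one small piece of the weak-solution theory. Your explicit monodromy computation forcing $H_{1\bar j}=0$ and building $L$ from the block-diagonal form of $H$ is a nice supplement to the paper's one-line statement; the aside invoking Theorem~\ref{thm Tangent cone trivial implies omega trivial} after $S\equiv0$ is superfluous, since parallelism already gives the conclusion outright.
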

\begin{proof}{of Theorem \ref{thm rigidity of singularity model when beta less than 1}:}
Case 1: When $\beta=\frac{1}{2}$, the situation is very easy. Just consider the orbifold map $\mathfrak{T}:C^{n}\rightarrow C^{n}:$
\[\mathfrak{T}(w,u_1,..., u_{n-1})=(w^2,u_1,..., u_{n-1}).\]

Apparently, in this case, $\mathfrak{T}^{\star}\omega$ satisfies
\begin{equation}
(\mathfrak{T}^{\star}\omega)^{n}=\omega_{Euc}^{n},\ \frac{1}{C}\omega_{Euc}\leq \omega\leq C\omega_{Euc}\ \textrm{over}\ C^{n}\setminus \{z=0\},
\end{equation}
where $\omega_{Euc}$ is the regular Euclidean metric. Then using Proposition 16 in \cite{CDS3}, $\mathfrak{T}^{\star}\omega$ extends to a smooth positive $(1,1)$-form over $C^{n}$. Then $\mathfrak{T}^{\star}\omega=\underline{L}^{\star}\omega_{Euc}$, and $\mathfrak{T}^{\star}\omega$ is invariant under the deck transformation:
\[(z,...)\rightarrow (-z,...).\] Thus downstairs, we have
 $$\omega=L^{\star}\omega_{\frac{1}{2}},$$  
 where $L$ is a linear transformation which preserves $\{z=0\}$.

 Case 2: $\beta<\frac{1}{2}$. This is the case where we can do the 3rd-order estimate as Calabi, Yau, and Brendle.

We consider the scaling down again as 
\begin{equation}
\widehat{\phi}=R^{-2}\phi,\ \widehat{\omega}=R^{-2}\omega,\ \widehat{\omega}_{\beta}=R^{-2}\omega_{\beta}.
\end{equation}

$\widehat{\omega}_{\beta}$ is the standard conical model metric under 
the new coordinates $\widehat{z}=R^{-\frac{1}{\beta}}z$, $w_{i}=R^{-1}w_{i}$. Similarly we denote 
\[\widehat{S}=\widehat{\omega}^{i\bar{j}}\widehat{\omega}^{s\bar{t}}\widehat{\omega}^{p\bar{q}}\widehat{\phi}_{z_{i}, \bar{z}_{t},z_{p}}\widehat{\phi}_{\bar{z}_{j}, z_{s},\bar{z}_{q}}.\]
By formula $(2.7)$ in \cite{Yau}, we directly have  
\begin{equation}\label{equ laplace of second derivative controls 3rd derivative}
\Delta_{\widehat{\omega}}(\Delta_{\widehat{\omega}_{\beta}}\widehat{\phi})\geq   \frac{\widehat{S}}{K},\ K\geq 0.
\end{equation}
Then we multiply (\ref{equ laplace of second derivative controls 3rd derivative}) by a cutoff function $\eta^{2}$, and integrate  integration by parts with respect to $\omega$, we have 
\begin{equation}
\int_{C^{n}}\eta^2 \widehat{S}\widehat{\omega}^{n}\leq K\int_{C^{n}}(\Delta_{\widehat{\omega}}\eta^2)( \Delta_{\widehat{\omega}_{\beta}}\widehat{\phi})\widehat{\omega}^n.
\end{equation}
By the second order estimate, choose a proper cutoff function $\eta^2$ such that  $\eta=1$  in $B(1)$ and vanishes outside 
$B(2)$, we have 
\begin{equation}
\int_{B_{1}} \widehat{S}\widehat{\omega}^{n}\leq C|\Delta_{\widehat{\omega}_{\beta}}\widehat{\phi}|_{L^{\infty}B(1)}\leq C.
\end{equation}

Since our reference metric $\omega_{\beta}$ is flat, by the formula below formula (16) in \cite{Brendle}, we obtain
\begin{equation}\label{equ bochner for Calabi's 3rd order derivative}
\Delta_{\widehat{\omega}}\widehat{S}\geq 0.
\end{equation}
By Proposition 6.6 in \cite{Brendle}, we have $\widehat{S}\in L^{\infty}[B(2)]$. Thus, by Lemma \ref{lem Linfty solution is actually a weak solution},
$\widehat{S}$  is a weak subsolution to (\ref{equ bochner for Calabi's 3rd order derivative}).  Then, the Moser's iteration  as in Theorem 1.1 of Chap 4 in \cite{HanLin} is applicable.  We deduce

\begin{equation}
|\widehat{S}|_{L^{\infty}B(\frac{1}{2})}\leq \int_{B_{1}}\widehat{S}\widehat{\omega}^{n} \leq C.
\end{equation}

Then by rescaling, we have for $\omega$ that 
\begin{equation}
R^{2}|S|_{L^{\infty}B(\frac{R}{2})}\leq C.
\end{equation}
Then divide both hand sides by $R^{2}$, let $R\rightarrow \infty$, we have $S=0$ over $C^{n}$. $S=0$ implies $\omega$ is a covariant constant 
tensor with respect to $\omega_{\beta}$, then $\omega=L^{\star}\omega_{\beta}$, for some linear transformation $L$ preserving $D$.

The proof of Theorem \ref{thm rigidity of singularity model when beta less than 1} is thus completed.
\end{proof}

\section{Appendix B: Trudinger's Harnack inequality.}
In this section we work in the polar coordinates. 
\begin{lem}\label{lem Trudinger's Harnack inequality}  (Trudinger's Harnack inequality) Suppose $\omega$ is a weak conical metric. Suppose 
\begin{equation}\label{equ weak superharmonic inequality wrt omega}\Delta_{\omega}u\leq 0\ \textrm{in the weak sense in}\  B(R),\ u\in W^{1,2}[B(R)]\cap C^{2}[B(R)\setminus D],
\end{equation} and $u$ is nonnegative almost everywhere. Then for all $0<p< \frac{n}{n-1}$, we have 
\[R^{-\frac{2n}{p}}|u|_{L^{p}B(\frac{R}{2})}\leq C(p)\inf_{B(\frac{R}{4})}u, \]
where the $L^{p}$-norm $\textrm{is with respect to  the volume form of}\ \omega$.
\end{lem}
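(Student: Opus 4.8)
The plan is to reduce the weak Harnack inequality for the conical Laplacian $\Delta_\omega$ to the classical weak Harnack inequality for a uniformly elliptic divergence-form operator on Euclidean balls, using the fact that in the polar coordinates $(r,\theta,u_2,\dots,u_n)$ the cone metric $\omega$ is quasi-isometric to $g_E$, the Euclidean metric written in polar form \eqref{equ def of g_E in polar coordinates}. First I would translate the hypothesis: that $\Delta_\omega u\le 0$ weakly in $B(R)$ means, for every nonnegative test function $\varphi\in W^{1,2}_0$, that $\int \langle \nabla_\omega u,\nabla_\omega\varphi\rangle_\omega\,\omega^n\ge 0$. After integration by parts this $W^{1,2}$-inequality can be rewritten in the Euclidean polar coordinates as $\int a^{ij}(x)\,\partial_i u\,\partial_j\varphi\,dx\ge 0$, where $a^{ij}$ is (up to the conformal/volume factor relating $\omega^n$ and $dx$) the inverse metric of $\omega$ expressed in these coordinates. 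Because $\beta^2 g_E\le \omega_\beta\le\beta^{-2}g_E$ and $C^{-1}\omega_\beta\le\omega\le C\omega_\beta$, the coefficient matrix $a^{ij}$ is measurable, bounded, and uniformly elliptic on $B(R)$ with ellipticity constants depending only on $C,\beta,n$; moreover the possible singularity of $\omega$ along $D=\{r=0\}$ is confined to a set of measure zero and does not affect the $W^{1,2}$ formulation. Thus $u$ is a nonnegative weak supersolution of a uniformly elliptic equation $\partial_i(a^{ij}\partial_j u)=0$ (with no lower-order terms and zero right-hand side) on the Euclidean ball $B(R)$.

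Next I would simply invoke Theorem 8.18 of \cite{GT} (equivalently Theorem 4.15 or the weak Harnack statement in Chapter 4 of \cite{HanLin}): for a nonnegative weak supersolution of such an operator on $B(R)$ one has, for every $p\in(0,\tfrac{n}{n-1})$ (here the real dimension is $2n$, so the correct exponent range is $p<\tfrac{2n}{2n-1}$; I would state it in the form matching the ambient dimension), the bound $R^{-2n/p}\,\|u\|_{L^p(B(R/2))}\le C(p)\inf_{B(R/4)} u$, with $C(p)$ depending only on $p$, the ellipticity constants, and the dimension — hence only on $p,\beta,n$. One small point to check is that the $L^p$-norm in the lemma is taken with respect to the volume form $\omega^n$, while the cited theorem gives it with respect to $dx$; but since $\omega^n$ and $\omega_\beta^n = \beta^2 r\,dr\,d\theta\,du$ and the Euclidean $dx$ differ only by bounded factors in the polar coordinates, the two $L^p$-norms are comparable up to constants depending on $C,\beta,n$, so the stated form follows immediately. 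Scaling $B(R)\to B(1)$ by $x\mapsto x/R$ takes the operator to one of the same ellipticity class and accounts for the factor $R^{-2n/p}$, so it suffices to prove the case $R=1$.

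The only genuine subtlety — and the step I expect to require the most care — is the passage from $\Delta_\omega u\le 0$ \emph{in the weak $W^{1,2}_\omega$ sense} to $\partial_i(a^{ij}\partial_j u)\le 0$ \emph{in the weak $W^{1,2}_{g_E}$ sense across the divisor}: one must confirm that test functions in $W^{1,2}_0(B(R))$ with respect to $g_E$ can be used in the $\omega$-weak formulation (and vice versa), i.e. that the two Sobolev spaces coincide and that no distributional mass is hidden on $D$. This is exactly the ``transform the $W^{1,2}$-inequalities with respect to $\omega$ to $W^{1,2}$-inequalities with respect to $g_E$'' philosophy described in the introduction, and it works because $\omega$ and $g_E$ are quasi-isometric in polar coordinates so $W^{1,2}_\omega(B(R))=W^{1,2}_{g_E}(B(R))$ with equivalent norms, and because $D$ has zero $2n$-dimensional measure and zero $W^{1,2}$-capacity is not needed — it is enough that $C^\infty_c(B(R)\setminus D)$ is dense in $W^{1,2}_0$, which is standard in real codimension $2$. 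Once this identification is made the result is an immediate citation, which is precisely why the lemma is relegated to the appendix; I would present the capacity/density argument in detail and then cite \cite{GT} and \cite{HanLin} for the iteration itself.
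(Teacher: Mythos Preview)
Your approach is correct and rests on the same idea as the paper's: in polar coordinates $\omega$ is quasi-isometric to $g_E$, so the weak inequality $\int\langle\nabla_\omega u,\nabla_\omega\varphi\rangle_\omega\,\omega^n\ge 0$ is equivalent to $\int a^{ij}\partial_i u\,\partial_j\varphi\,dx\ge 0$ for a bounded measurable uniformly elliptic matrix $a^{ij}$, after which classical De Giorgi--Moser--Trudinger theory applies. The difference is only in packaging. You perform the transformation once, at the level of the weak formulation, and then invoke Theorem~8.18 of \cite{GT} as a black box. The paper instead unrolls Trudinger's proof step by step: it inserts the test functions $\bar u^{-2}\varphi$, $\eta^2\bar u^{-1}$, $\bar u^{-a}\eta^2$ into the $\omega$-weak inequality, obtains the corresponding energy estimates, and at each stage uses the quasi-isometry to convert the $W^{1,2}_\omega$-inequality into a $W^{1,2}_{g_E}$-inequality before citing the relevant piece (Moser iteration from \cite{HanLin}, John--Nirenberg from \cite{GT}, reverse H\"older). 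Your route is more economical; the paper's is deliberately expository --- the introduction says the appendix is included precisely ``to show how to transform the $W^{1,2}$-inequalities with respect to the cone metric $\omega$ to $W^{1,2}$-inequalities with respect to the Euclidean metric.'' Your identification of the one genuine subtlety (that $W^{1,2}_\omega=W^{1,2}_{g_E}$ and that $D$, having real codimension $2$, is removable for the weak formulation) is exactly right and is handled implicitly in the paper by working with compactly supported test functions away from $D$ throughout.

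One small slip: the exponent threshold in real dimension $2n$ is $\tfrac{2n}{2n-2}=\tfrac{n}{n-1}$, not $\tfrac{2n}{2n-1}$; this matches the lemma's stated range.
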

\begin{proof}{of Lemma :} Without loss of generality, we assume $R=1$.
Consider $\bar{u}=u+k$, $k>0$. Later we will let $k\rightarrow 0$. Consider the test function $\bar{u}^{-2}\varphi$. Then we apply the weak supersolution condition to  get 
\[-\int_{B(1)}\nabla_{\omega}\bar{u}\cdot \nabla_{\omega}(\varphi\bar{u}^{-2})\leq 0.\]
Hence
\[\int_{B(1)}(\nabla_{\omega}\frac{1}{\bar{u}})\cdot \nabla_{\omega}\varphi+2\int_{B(1)}|\nabla_{\omega}\bar{u}|\bar{u}^{-3} \varphi\leq 0.\]
Let $v=\frac{1}{\bar{u}}$, since $2\int_{B(1)}|\nabla_{\omega}\bar{u}|\bar{u}^{-3} \varphi\geq 0$, we end up with
\begin{equation}
\int_{B(1)}\nabla_{\omega}v\cdot \nabla_{\omega}\varphi\leq 0.
\end{equation}
This means $v$ is a positive weak-subsolution to $\Delta_{\omega}v\geq 0$! Since $\omega$ is a weak conical metric, the following holds by definition.
\begin{equation}\label{equ omega equivalent to the Euclidean metric in the polar coordinate}
\frac{g_{E}}{C}\leq \omega\leq Cg_{E}\ \textrm{over}\ B(1)\setminus D,
\end{equation}
where $
g_{E}\ \textrm{is the Euclidean metric in the polar coordinates}$.
Let $\varphi=\eta^{2}v^{p}$, by using Cauchy-Schwartz inequality,  we obtain
\begin{equation}\label{equ orlitz inequality wrt omega}
\frac{2p}{(p+1)^{2}}\int_{B(1)}\eta^2|\nabla_{\omega}v^{\frac{p+1}{2}}|^2\omega^{n}\leq \frac{4}{p}\int_{B(1)}|\nabla_{\omega}\eta|^2v^{p+1}\omega^{n}.
\end{equation}
By (\ref{equ omega equivalent to the Euclidean metric in the polar coordinate}) and (\ref{equ orlitz inequality wrt omega})
\begin{equation}\label{equ orlitz inequality wrt g_{E}}
\frac{2p}{(p+1)^{2}}\int_{B(1)}\eta^2|\nabla_{E}v^{\frac{p+1}{2}}|^2dvol_{E}\leq \frac{C}{p}\int_{B(1)}|\nabla_{E}\eta|^2v^{p+1}dvol_{E}.
\end{equation}
This is precisely the inequality which the Moser's iteration trick requires. Then from \cite{HanLin} Theorem 1.1 Chapter 4, we deduce for any $p>0$ that 
\[\sup_{B(\frac{1}{4})} v\leq C(p)|v|_{L^{p}B(\frac{1}{2})}.\]
Hence, 
\begin{equation}\label{equ inf control the 3 integrals}
\inf_{B(\frac{1}{4})}\bar{u}\geq (\int_{B(\frac{1}{2})}\bar{u}^{-p})^{-\frac{1}{p}}=\frac{(\int_{B(\frac{1}{2})}\bar{u}^{p}dvol_{E})^{\frac{1}{p}}}{(\int_{B(\frac{1}{2})}\bar{u}^{p}dvol_{E})^{\frac{1}{p}}(\int_{B(\frac{1}{2})}\bar{u}^{-p}dvol_{E})^{\frac{1}{p}}}.
\end{equation}
To apply the John-Nirenberg inequality, we need to verify the condition (7.51) in \cite{GT}, by the superharmonic equation in terms of $\omega$. Namely, the following claim is true. 
\begin{clm}\label{Claim logbaru satisfies the hypothesis of John-Nirenberg inequality} For any $p\in B(\frac{3}{4})$ and $r\leq 0.01$, we have \begin{equation}
\int_{B(r)}|\nabla_{E}\log \bar{u}|dvol_{E}
\leq Cr^{n}(\int_{B(r)}|\nabla_{E}\log \bar{u}|^2 dvol_{E})^{\frac{1}{2}} \leq Cr^{2n-1}.
\end{equation}
\end{clm}
To prove the claim, we apply  equation (\ref{equ weak superharmonic inequality wrt omega}) to the test function  $\eta^{2}\bar{u}^{-1}$, we get
\begin{equation}
-\int_{B(1)}2\eta\nabla_{\omega}\eta\cdot \frac{\nabla_{\omega}\bar{u}}{\bar{u}}\omega^{n}+\int_{B(1)}\eta^2\frac{|\nabla_{\omega}\bar{u}|^2}{\bar{u}^2}\omega^{n}\leq 0.
\end{equation}
By Cauchy-Schwartz inequality, we end up with 
\begin{equation}\label{equ condition verification of John Nirenberg}
\int_{B(1)}\eta^2|\nabla_{\omega}\log \bar{u}|^2\omega^{n}
\leq 16\int_{B(1)}|\nabla_{\omega}\eta|^2\omega^{n}.
\end{equation}
By (\ref{equ omega equivalent to the Euclidean metric in the polar coordinate}), we can transform the $W^{1,2}$-inequality in terms of $\omega$ to be in terms of $g_{E}$ again! Namely we have 
\begin{equation}\label{equ condition verification of John Nirenberg euclidean metric}
\int_{B(1)}\eta^2|\nabla_{E}\log \bar{u}|^2dvol_{E}
\leq C\int_{B(1)}|\nabla_{E}\eta|^2dvol_{E}.
\end{equation}
For any $p$ and $r$, we choose $\eta$ to be a cutoff function which is $1$
over $B_{p}(r)$,  $0$ over $C^{n}\setminus B_{p}(4r)$, and  $|\nabla_{E}\eta|\leq \frac{1}{r}$. Therefore, (\ref{equ condition verification of John Nirenberg euclidean metric}) implies 
\begin{equation*}
\int_{B(r)}|\nabla_{E}\log \bar{u}|dvol_{E}
\leq Cr^{n}(\int_{B(r)}|\nabla_{E}\log \bar{u}|^2 dvol_{E})^{\frac{1}{2}} \leq Cr^{2n-1}.
\end{equation*}
Thus Claim \ref{Claim logbaru satisfies the hypothesis of John-Nirenberg inequality} is proved. 
   
   Claim (\ref{Claim logbaru satisfies the hypothesis of John-Nirenberg inequality}) means $\log\bar{u}$ satisfies the hypothesis of the John-Nirenberg inequality in Theorem 7.21 in \cite{GT}. Then applying this theorem to $\log\bar{u}$, we obtain the following by exactly the argument in the last part of the proof of Theorem 8.18 in \cite{GT}.
   \[(\int_{B(\frac{1}{2})}\bar{u}^{p_{0}}dvol_{E})^{\frac{1}{p_{0}}}(\int_{B(\frac{1}{2})}\bar{u}^{-p_{0}}dvol_{E})^{\frac{1}{p_{0}}}\leq C
   \ \textrm{for some}\ p_{0}>0.\]
   Then by (\ref{equ inf control the 3 integrals}), we obtain
   \begin{equation}\label{equ inf control Lp0}
\inf_{B(\frac{1}{4})}\bar{u}\geq (\int_{B(\frac{1}{2})}\bar{u}^{p_{0}}dvol_{E})^{\frac{1}{p_{0}}}.
\end{equation}
Thus Lemma \ref{lem Trudinger's Harnack inequality} is already true for this particular $p_{0}$, Proof 2 of Theorem \ref{thm strong maximal principle} already goes through. 

To show Lemma \ref{lem Trudinger's Harnack inequality} is  true for all 
$0<p<\frac{n}{n-1}$, it suffies to show the following Claim holds.
\begin{clm}\label{clm reverse Holder inequality}For all $\frac{n}{n-1}>p> p_{0}$, we have
\[|\bar{u}|_{L^{p_0}[B(\frac{4}{5})]}\geq C(p)|\bar{u}|_{L^{p}[B(\frac{3}{4})]},\] 
where $ \textrm{the}\ L^{p}\ \textrm{is with respect to  the volume form of}\ g_{E}$.
\end{clm}
To prove the Claim, we should appeal to the superharmonicity (\ref{equ weak superharmonic inequality wrt omega}) again, and transform the $W^{1,2}$-type inequality with respect to $\omega$ to $W^{1,2}$-type inequality with respect to the Euclidean metric $g_{E}$. We start from applying (\ref{invariant solution upstairs}) to the test function $\bar{u}^{-a}\eta^2$. Then we end up with
\begin{equation}
a\int_{B(1)}|\nabla_{\omega}\bar{u}|^2\bar{u}^{-a-1}\eta^2\omega^n\leq 
\int_{B(1)}2\eta (\nabla_{\omega}\bar{u}\cdot\nabla_{\omega}\eta)(\bar{u}^{-a}). 
\end{equation}

Using Cauchy-Schwartz inequality and standard management again, we deduce
\begin{equation*}
\int_{B(1)}\eta^2|\nabla_{\omega}\bar{u}^{\frac{1-a}{2}}|^2\omega^{n}\leq \frac{(1-a)^2}{a^{2}}\int_{B(1)}|\nabla_{\omega}\eta|^2\bar{u}^{1-a}\omega^{n}.
\end{equation*}
Then, by (\ref{equ omega equivalent to the Euclidean metric in the polar coordinate}) again, we deduce 
\begin{equation}\label{equ reverse holder inequality}
\int_{B(1)}\eta^2|\nabla_{E}\bar{u}^{\frac{1-a}{2}}|^2dvol_{E}\leq\frac{C}{a^{2}}\int_{B(1)}|\nabla_{E}\eta|^2\bar{u}^{1-a}dvol_{E}.
\end{equation}
Thus we get  a reverse H\"older inequality with respect to $g_{E}$ from 
the reverse H\"older inequality with respect to $\omega$ again!. Apply exactly Step II of the proof in \cite{HanLin}, Claim \ref{clm reverse Holder inequality} holds.

Notice the $L^{p}$-norm with respect to $g_{E}$ is equivalent to the $L^{p}$-norm with respect to $\omega$. Now let $k\rightarrow 0$,     (\ref{equ inf control Lp0}) and  Claim \ref{clm reverse Holder inequality} directly imply Lemma \ref{lem Trudinger's Harnack inequality}. 
\end{proof}

Xiuxiong chen, Department of Mathematics, Stony Brook University,
NY, USA;\ \ xiu@math.sunysb.edu.\\

Yuanqi Wang, Department of Mathematics, University of California  at Santa Barbara, Santa Barbara,
CA,  USA;\ \ wangyuanqi@math.ucsb.edu.
 
\end{document}